\newtheorem{df}{\bf Definition}[section]
\newtheorem{thm}[df]{\bf Theorem}
\newtheorem{lem}[df]{\bf Lemma}
\newtheorem{cor}[df]{\bf Corollary}
\newtheorem{prop}[df]{\bf Proposition}
\newtheorem{rem}[df]{\bf Remark}
\newtheorem*{claim}{\bf Claim}
\newtheorem{thmA}{\bf Theorem}
\newtheorem{corA}[thmA]{\bf Corollary}
\newcommand{\R}{\mathbb{R}}
\newcommand{\C}{\mathbb{C}}
\newcommand{\Z}{\mathbb{Z}}
\newcommand{\N}{\mathbb{N}}
\newcommand{\K}{\mathbb{K}}
\newcommand{\M}{\mathbb{M}}
\newcommand{\ri}{\mathrm{i}}
\newcommand{\actson}{\curvearrowright}
\newcommand{\Ad}{\operatorname{Ad}}
\newcommand{\id}{\text{\rm id}}
\newcommand{\Aut}{\operatorname{Aut}}
\newcommand{\Tr}{\mathord{\text{\rm Tr}}}
\title{\bf Cocycle perturbations and ergodicity for actions on type III factors}
\author{Yusuke Isono\thanks{Research Institute for Mathematical Sciences, Kyoto University, 606-8502, Kyoto, Japan}}
\date{}
\begin{document}

\maketitle

\renewcommand{\thefootnote}{}% 番号書式を"空"にする
\footnote[0]{E-mail: \texttt{isono@kurims.kyoto-u.ac.jp}}
\footnote[0]{2020 \textit{Mathematics Subject Classification.} 46L10, 46L36, 46L40.}
\footnote[0]{\textit{Key words}: Type $\rm III$ factor; Tomita--Takesaki theory; Ergodic theory.}
\footnote[0]{YI is supported by JSPS KAKENHI Grant Number 24K06759.}
\renewcommand{\thefootnote}{\arabic{footnote}}% 元(算用数字)に戻す

\begin{abstract}
We study cocycle perturbations of state preserving actions on type $\mathrm{III}_1$ factors. 
Extending the theorem of Marrakchi and Vaes for type $\mathrm{II}_1$ factors, we show that a 
state preserving outer $\mathbb Z$-action on a type $\mathrm{III}_1$ factor with trivial 
bicentralizer admits a unitary cocycle whose perturbation becomes an ergodic action. 
This partially answers a question of Marrakchi and Vaes.
A major difference from the type $\mathrm{II}$ case is that the modular automorphism group 
naturally appears as part of the action, making the construction of the required cocycle 
more delicate.
\end{abstract}

\section{Introduction}

Cocycle perturbations of group actions provide a useful method for modifying dynamical
properties while preserving the underlying von Neumann algebraic structure.
In the case of type $\mathrm{II}_1$ factors, Marrakchi and Vaes \cite{MV23} proved that for every
outer action of a countable amenable group, one can find a 1-cocycle such that the 
perturbed action becomes ergodic, and moreover that such cocycles form a dense $G_\delta$
subset of the cocycle space. 
We refer to such a cocycle as an \emph{ergodic cocycle}. 
This settles the ergodicity problem for cocycle perturbations in the type $\mathrm{II}_1$ setting, 
and in particular implies that for a single automorphism, the existence of an ergodic cocycle 
is equivalent to the outerness of all its nonzero powers.

The purpose of this paper is to extend the above framework to the \emph{state preserving} 
setting for type $\mathrm{III}$ factors, where state preserving means that the action 
leaves a faithful normal state invariant.
While many concrete examples of ergodic actions on type $\mathrm{III}$ factors are known, 
it has remained open whether an outer action admits an ergodic cocycle.
The state preserving assumption ensures that the action extends to an isometry on the 
GNS Hilbert space, which underlies the topological arguments used in \cite{MV23} and 
in the present work. 
We also note that Marrakchi and Vaes \cite[Open problem~4.6]{MV23} explicitly asked whether their 
results could extend to type $\mathrm{III}$ factors and suggested that the state preserving 
case might be tractable.

Our main theorem, stated below, gives an affirmative answer for type $\mathrm{III}_1$ factors under the assumption of \emph{trivial bicentralizer}, see Subsection~\ref{Preliminaries}. This assumption is satisfied for the concrete examples appearing in the literature and is conjecturally expected to hold for all type~$\mathrm{III}_1$ factors. Consequently, the existence of ergodic cocycles for single automorphisms in the state preserving type $\mathrm{III}_1$ setting is settled in essentially all cases.

\begin{thmA}\label{thmA}
Let $M$ be a type $\mathrm{III}_1$ factor with separable predual and with trivial bicentralizer. Let $\theta \in \Aut(M)$ be a state preserving automorphism. Then the following conditions are equivalent:
\begin{enumerate}
	\item $\theta$ has infinite order in $\mathrm{Out}(M)$;
	\item there exists $u \in \mathcal U(M)$ such that $\Ad(u)\circ\theta$ is state preserving and ergodic.
\end{enumerate}
\end{thmA}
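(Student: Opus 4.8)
\emph{The implication $(2)\Rightarrow(1)$.} Here I would prove the contrapositive via the principle that an ergodic state-preserving automorphism of an infinite-dimensional factor is \emph{aperiodic} (no nonzero power is inner). Suppose $\sigma:=\Ad(u)\circ\theta$ preserves a faithful normal state $\psi$ and is ergodic, while $\theta^{n}$ is inner for some $n\geq 1$. Since $(\Ad(u)\circ\theta)^{n}=\Ad(u_{n})\circ\theta^{n}$ for the partial cocycle $u_{n}=u\,\theta(u)\cdots\theta^{n-1}(u)$, also $\sigma^{n}$ is inner, say $\sigma^{n}=\Ad(z)$; as $\sigma^{n}$ fixes $\psi$ one gets $z\in\mathcal U(M_{\psi})$. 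Then $\sigma(z)z^{*}\in Z(M)=\C 1$, so $\sigma(z)=\lambda z$ with $|\lambda|=1$, and applying $\Ad$ gives $\lambda^{n}z=\sigma^{n}(z)=z$, whence $\lambda^{n}=1$ and $z^{n}\in M^{\sigma}=\C 1$. Thus $W^{*}(z)$ is finite-dimensional abelian and $\sigma$ cyclically permutes its minimal projections. If this permutation is not a single cycle, a proper orbit-sum is a nontrivial $\sigma$-fixed projection; if it is a single $r$-cycle with $r\ge 2$, then over a minimal projection $q$ of $W^{*}(z)$ the corner $qMq$ is again a type $\mathrm{III}$ factor on which $\sigma^{r}$ acts with finite order, hence with nontrivial fixed-point algebra, and an orbit-sum of a nonscalar fixed element gives a nontrivial $\sigma$-fixed element of $M$; and if $r=1$ then $z\in\C 1$, so $\sigma^{n}=\id$ and $\tfrac1n\sum_{j<n}\sigma^{j}$ is a $\psi$-preserving conditional expectation $M\to\C 1$, forcing $p\le n\,\psi(p)1$ for every projection $p$, impossible in a type $\mathrm{III}$ factor. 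Each case contradicts ergodicity, so $\theta^{n}$ is outer for all $n\ne 0$, i.e.\ $\theta$ has infinite order in $\mathrm{Out}(M)$.

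\emph{The implication $(1)\Rightarrow(2)$: set-up.} I would first replace the given invariant state by a $\theta$-invariant faithful normal state $\varphi$ whose centralizer $M_{\varphi}$ is a type $\mathrm{II}_{1}$ factor; the existence of such a $\varphi$ is where triviality of the bicentralizer enters, through a $\theta$-equivariant version of the Haagerup--Popa construction of states with large centralizer. As $\theta$ fixes $\varphi$, it commutes with the modular flow $\sigma^{\varphi}$ and restricts to an automorphism $\theta|_{M_{\varphi}}$ of the $\mathrm{II}_{1}$ factor $M_{\varphi}$. The admissible perturbations are then exactly the $u\in\mathcal U(M_{\varphi})$ (those keeping $\varphi$ invariant), and they form a Polish group in the $\|\cdot\|_{\varphi}$-topology since $\varphi|_{M_{\varphi}}$ is a faithful normal finite trace; the plan is to show, following Marrakchi--Vaes, that $\mathcal E:=\{u\in\mathcal U(M_{\varphi}):\Ad(u)\circ\theta\ \text{is ergodic on }M\}$ is a dense $G_{\delta}$, hence nonempty.

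\emph{$G_{\delta}$ and density.} The $G_{\delta}$ statement I would obtain from the mean ergodic theorem: for $\sigma\in\Aut(M,\varphi)$ the fixed space of the Koopman unitary $U_{\sigma}$ on $L^{2}(M,\varphi)$ is $\overline{M^{\sigma}\xi_{\varphi}}$ (it is $J_{\varphi}$- and $\Delta_{\varphi}^{it}$-invariant), so ergodicity of $\sigma$ is equivalent to $\tfrac1N\sum_{k<N}\sigma^{k}(x)\to\varphi(x)1$ in $\|\cdot\|_{\varphi}$ for every $x$ in a fixed countable $\|\cdot\|_{\varphi}$-dense subset of the unit ball of $M$; together with the Lipschitz estimate $\|\Ad(u)a\,u^{*}-\Ad(u_{0})a\,u_{0}^{*}\|_{\varphi}\le 2\|a\|\,\|u-u_{0}\|_{\varphi}$ (valid because $u,u_{0}\in M_{\varphi}$), each of these countably many conditions is open, so $\mathcal E$ is $G_{\delta}$. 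Density is the real work: given $u_{0}$, a test element $x$ and $\varepsilon>0$, I would absorb $u_{0}$ into $\theta$ and use an approximate Rokhlin tower of large height $N$ for the aperiodic automorphism $\theta|_{M_{\varphi}}$ of the $\mathrm{II}_{1}$ factor $M_{\varphi}$ to construct a perturbation $u\in\mathcal U(M_{\varphi})$ with $\|u-1\|_{\varphi}<\varepsilon$ whose partial cocycles $u_{k}=u\,\theta(u)\cdots\theta^{k-1}(u)$ decorrelate the iterates, so that $\tfrac1N\sum_{k<N}u_{k}\theta^{k}(x)u_{k}^{*}$ is $\|\cdot\|_{\varphi}$-close to $\varphi(x)1$. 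Since the perturbing unitaries lie in $M_{\varphi}$ while $x$ ranges over all of $M$, here one must use $M_{\varphi}'\cap M=\C 1$ (again triviality of the bicentralizer) so that conjugation by the tower and the $u_{k}$ moves the whole of $M$, not only $M_{\varphi}$; this is also what lets the same device handle test elements not fixed by $\sigma^{\varphi}$.

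\emph{The main obstacle.} The genuinely new difficulty compared with the type $\mathrm{II}_{1}$ theorem of Marrakchi--Vaes is that the modular flow is now intertwined with the action: requiring $\Ad(u)\circ\theta$ to remain state-preserving forces $u\in M_{\varphi}$, so one is in effect perturbing the joint action of $\Z\times\R$ generated by $\theta$ and $\sigma^{\varphi}$ rather than $\theta$ alone. This makes two points delicate, and I expect them --- not the Baire-category scheme --- to be the technical heart of the proof: (i) choosing the invariant state $\varphi$ \emph{$\theta$-equivariantly} so that $M_{\varphi}$ is a $\mathrm{II}_{1}$ factor \emph{and} $\theta|_{M_{\varphi}}$ is aperiodic on it, which amounts to comparing outerness in $M$ with outerness in $M_{\varphi}$ and in particular to showing that a $\varphi$-preserving automorphism of $M$ fixing $M_{\varphi}$ pointwise is trivial; and (ii) making the Rokhlin-tower construction, which lives inside $M_{\varphi}$, kill the fixed points of $\Ad(u)\circ\theta$ that are not $\sigma^{\varphi}$-fixed --- i.e.\ that lie outside $M_{\varphi}$, a phenomenon with no analogue in the tracial case --- which I would analyze via the $\sigma^{\varphi}$-weight decomposition of $M$ together with triviality of the bicentralizer.
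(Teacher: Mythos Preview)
Your argument for $(2)\Rightarrow(1)$ is correct and essentially what the paper does (it simply cites \cite[Theorem~3.2]{MV23}).

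The direction $(1)\Rightarrow(2)$, however, has a genuine gap, and it is precisely the obstacle you flag but do not resolve. Your whole strategy rests on finding a $\theta$-invariant faithful normal state $\varphi$ with $M_\varphi$ a type $\mathrm{II}_1$ factor \emph{and} $\theta|_{M_\varphi}$ aperiodic. Both halves are problematic. First, no ``$\theta$-equivariant Haagerup--Popa construction'' is available in the literature; the paper explicitly points out that its theorem must cover the case $M_\varphi=\mathbb C$, and circumvents the lack of a large centralizer by working at the ultrapower level (Connes--St{\o}rmer transitivity makes $(M^\omega)_{\varphi^\omega}$ large for every $\varphi$). Second, and more seriously, your proposed criterion ``a $\varphi$-preserving automorphism of $M$ fixing $M_\varphi$ pointwise is trivial'' is \emph{false}: $\sigma_t^\varphi$ itself is such an automorphism for every $t\neq 0$. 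This is not a technicality. If $\theta^p=\Ad(w)\circ\sigma_t^\varphi$ for some $p\ge 1$, $t\in\R$ and $w\in\mathcal U(M)$ (the case $\Gamma_{\mathrm{mod}}\neq\{e\}$ in the paper's notation), then $w\in M_\varphi$ and $\theta^p|_{M_\varphi}=\Ad(w)$ is \emph{inner} on $M_\varphi$, even though $\theta^p$ is outer on $M$. So your Rokhlin-tower input fails, and no choice of $\theta$-invariant $\varphi$ can repair this.

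The paper deals with this by splitting into two cases. When $\Gamma_{\mathrm{mod}}=\{e\}$ it runs a Baire-category argument, but on the Polish space $\mathcal C_{\mathrm{state}}(\alpha)$ of pairs $(v,\psi)$ with $\alpha^v$ preserving $\psi$, rather than fixing $\varphi$ and restricting to $\mathcal U(M_\varphi)$; the density step uses free independence in $M^\omega$ relative to $(M^\omega)_{\varphi^\omega}$ (Proposition~\ref{prop-free-independence1}, Lemma~\ref{lem-cocycle3.6}), which requires only trivial bicentralizer and not $M_\varphi'\cap M=\mathbb C$. When $\Gamma_{\mathrm{mod}}=p\mathbb Z$ with $p\ge 1$, the paper abandons the Baire scheme entirely (Theorem~\ref{thm-case2}): it first perturbs so that $\theta^{p^2}=\sigma_{pt}^\varphi$, then replaces $\varphi$ by a state $\psi$ built from ergodic states on the type $\mathrm{III}_1$ summands of the fixed-point algebra of the resulting finite-group action $\beta=\sigma_{-t/p}^\varphi\circ\theta$, shows $M^{\sigma_T^\psi}$ is finite-dimensional for all $T$, and finishes via a minimal-projection argument (Lemma~\ref{lem-ergodic-minimal}). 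Your outline contains no analogue of this second case, and the phenomenon it addresses is exactly the counterexample to your hoped-for criterion.
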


When $M$ is amenable, the state preserving assumption can be removed by using the classification of automorphisms of the amenable type $\mathrm{III}_1$ factor \cite{KST89}. 
This yields the following corollary.

\begin{corA}\label{corB}
Every automorphism of the amenable type $\mathrm{III}_1$ factor $R_\infty$ that has infinite order in $\mathrm{Out}(R_\infty)$ admits an ergodic cocycle.
\end{corA}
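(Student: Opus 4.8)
The plan is to reduce Corollary~\ref{corB} to Theorem~\ref{thmA} by showing that amenability forces every automorphism of $R_\infty$ of infinite order in $\Out(R_\infty)$ to be state preserving, so that no further hypothesis is needed. First I would record the inputs: $R_\infty$ has separable predual by construction and, by Haagerup's theorem, trivial bicentralizer, so it meets the standing hypotheses of Theorem~\ref{thmA}; and ``$\theta$ has infinite order in $\Out(R_\infty)$'' is by definition the assertion that $\theta$ is aperiodic, i.e.\ $\theta^n\notin\Int(R_\infty)$ for all $n\geq 1$.

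The substantive use of \cite{KST89} is the conjugacy classification of a single automorphism of an injective factor. For $R_\infty$ the flow of weights is the trivial flow on a point, so the Connes--Takesaki module $\mathrm{mod}(\theta)$ carries no information, and for an aperiodic automorphism the outer period and the characteristic (torsion) invariant are vacuous; hence the aperiodic automorphisms of $R_\infty$ form a single $\Aut(R_\infty)$-conjugacy class (in fact all I need is that every aperiodic automorphism is conjugate to a state preserving one). To pin this class down I would exhibit a state preserving aperiodic automorphism, for instance the modular automorphism $\sigma_1^\varphi$ of a faithful normal state $\varphi$ --- it fixes $\varphi$, and $(\sigma_1^\varphi)^n=\sigma_n^\varphi\notin\Int(R_\infty)$ for $n\geq 1$ since $T(R_\infty)=\{0\}$ --- or a noncommutative Bernoulli shift on an Araki--Woods realization $R_\infty\cong\overline{\bigotimes}_{n\in\Z}(M_k,\varphi_0)$ chosen of type $\mathrm{III}_1$. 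Thus any aperiodic $\theta$ can be written $\theta=\alpha\circ\psi\circ\alpha^{-1}$ with $\alpha\in\Aut(R_\infty)$ and $\varphi\circ\psi=\varphi$, and then $\theta$ preserves the faithful normal state $\varphi\circ\alpha$; so $\theta$ is state preserving.

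Now Theorem~\ref{thmA} applies directly to the state preserving automorphism $\theta$ of infinite order in $\Out(R_\infty)$ and produces $u\in\mathcal U(R_\infty)$ with $\Ad(u)\circ\theta$ state preserving and ergodic, which is the desired ergodic cocycle. (If one prefers to quote \cite{KST89} only in the form of cocycle conjugacy, the same conclusion follows by applying Theorem~\ref{thmA} to the model automorphism $\psi$ and transporting the ergodic perturbation back along the cocycle conjugacy, since ergodicity is conjugation invariant and a cocycle perturbation of a cocycle perturbation is again one.)

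Beyond Theorem~\ref{thmA}, which carries all the real content, the one step needing genuine care is the extraction from \cite{KST89} of the statement that aperiodicity by itself places $\theta$ in a single conjugacy class containing a state preserving representative. This rests on the triviality of the flow of weights of $R_\infty$ --- equivalently, the vanishing of the module obstruction --- and it is precisely this point that fails for general, non-amenable, type $\mathrm{III}_1$ factors, which is why amenability is used here to remove the state preserving hypothesis.
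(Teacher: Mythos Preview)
Your approach is correct and follows the same strategy as the paper: use amenability and the classification of automorphisms of $R_\infty$ to reduce to the state preserving case, then invoke Theorem~\ref{thmA}. The paper carries out the reduction more directly by citing \cite[Lemma~5.1]{HI22}, which produces $u\in\mathcal U(R_\infty)$ with $\Ad(u)\circ\theta$ state preserving, and then applies Theorem~\ref{thmA} to that inner perturbation; one small slip in your write-up is that the state preserved by $\theta=\alpha\circ\psi\circ\alpha^{-1}$ is $\varphi\circ\alpha^{-1}$, not $\varphi\circ\alpha$.
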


For a von Neumann algebra $M$ with a faithful state $\varphi \in M_\ast$, define
\[
\mathrm{Mod}(M)
:= \{\, \Ad(u)\circ\sigma_t^{\varphi} \in \Aut(M) \mid u \in \mathcal U(M),\ t \in \mathbb R \,\}.
\]
This is a normal subgroup of $\Aut(M)$ containing $\mathrm{Int}(M)$. 
For a discrete group action $\alpha \colon \Gamma \to \Aut(M)$, we define the normal subgroup
\[
\Gamma_{\mathrm{mod}} := \alpha^{-1}(\mathrm{Mod}(M)) \le \Gamma.
\]
For outer actions on type $\mathrm{II}$ factors, one always has $\Gamma_{\mathrm{mod}}=\{e\}$. 
One of the major differences in the type $\mathrm{III}$ setting is that $\Gamma_{\mathrm{mod}}$
may be nontrivial.
The proof of Theorem~\ref{thmA} will be divided according to whether 
$\Gamma_{\mathrm{mod}}$ is trivial or not.

\bigskip
\noindent
{\bf Case 1: $\Gamma_{\mathrm{mod}}$ is trivial.}
In the case where $\Gamma_{\mathrm{mod}}$ is trivial, we prove the following theorem, 
which is a direct generalization of \cite{MV23} to the type III setting.

\begin{thmA}\label{thmC}
Let $\alpha \colon \Gamma \actson M$ be a state preserving outer action of a countably infinite discrete group $\Gamma$ on a type $\mathrm{III}_1$ factor with separable predual. 
Assume that $\Gamma$ is amenable and that $M$ has trivial bicentralizer. 
If $\Gamma_{\mathrm{mod}}$ is trivial, then there exists an $\alpha$-cocycle 
$u \colon \Gamma \to \mathcal U(M)$ such that the perturbed action 
is state preserving and weakly mixing.
\end{thmA}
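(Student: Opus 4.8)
The plan is to run the Baire category argument of \cite{MV23} inside the Polish space of state preserving cocycles, adding one new ingredient to handle the part of the GNS space on which the modular flow acts nontrivially.

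\textbf{Setup.} Using the trivial bicentralizer hypothesis I would first fix a $\Gamma$-invariant faithful normal state $\varphi$ for which $N := M^\varphi$ is a type $\mathrm{II}_1$ factor with $N' \cap M = \C 1$; such a state exists by the structure theory of the bicentralizer, and amenability of $\Gamma$ (averaging a good state along a F\o lner sequence) allows it to be taken $\Gamma$-invariant. Since $\alpha$ preserves $\varphi$ it commutes with $\sigma^{\varphi}$ and hence preserves $N$, so $\alpha|_N$ is an action of $\Gamma$ on the $\mathrm{II}_1$ factor $N$. A cocycle perturbation $\beta^w_g := \Ad(w_g)\circ\alpha_g$ is state preserving exactly when the cocycle takes values in $\mathcal{U}(N)$; then $\beta^w$ again commutes with $\sigma^{\varphi}$ and acts on $L^2(M,\varphi)$ by $\xi \mapsto w_g\,(U^{\alpha}_g\xi)\,w_g^{\ast}$, where $U^{\alpha}$ is the Koopman representation of $\alpha$. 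Finally, the hypothesis $\Gamma_{\mathrm{mod}} = \{e\}$ forces $\alpha|_N$ to be a \emph{free} action: if $\alpha_g|_N = \Ad(v)$ for some $v \in \mathcal{U}(N)$, then $\Ad(v)^{-1}\circ\alpha_g$ is a $\varphi$-preserving automorphism fixing $M^{\varphi}$ pointwise, hence (again by the bicentralizer structure theory) equals $\sigma^{\varphi}_t$ for some $t$, so $\alpha_g \in \mathrm{Mod}(M)$ and $g = e$.

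\textbf{Weak mixing is generic.} Equip the space $\mathcal{Z}$ of $\mathcal{U}(N)$-valued $\alpha$-cocycles with the topology of pointwise $\ast$-strong convergence; it is a Polish group, and $w \mapsto (\xi\mapsto w_g\,(U^\alpha_g\xi)\,w_g^\ast)$ is continuous into the unitary group of $L^2(M,\varphi)$ with the strong topology. Recall that a unitary representation $\pi$ is weakly mixing iff for every finite tuple of vectors $\xi_1,\dots,\xi_k$ and every $\varepsilon > 0$ there is $g$ with $|\langle \pi_g\xi_i,\xi_j\rangle| < \varepsilon$ for all $i,j$. Applying this to the Koopman representation of $\beta^w$ on $L^2(M,\varphi) \ominus \C\Omega$, the set of $w$ for which $\beta^w$ is weakly mixing equals $\bigcap_{\varepsilon,\vec\xi} O_{\varepsilon,\vec\xi}$, where $O_{\varepsilon,\vec\xi} := \bigcup_{g\in\Gamma}\{\, w : \max_{i,j}|\langle w_g\,(U^{\alpha}_g\xi_i)\,w_g^{\ast},\,\xi_j\rangle| < \varepsilon \,\}$ is open and the intersection runs over a countable determining family. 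By the Baire category theorem it suffices that each $O_{\varepsilon,\vec\xi}$ be dense. The homeomorphism $w \mapsto (g \mapsto w_g w_{0,g}^{\ast})$ from $\mathcal{Z}$ onto the $\beta^{w_0}$-cocycle space sends $w_0$ to the trivial cocycle and identifies $\beta^w$ with the corresponding perturbation of $\beta^{w_0}$, so it is enough to prove density near the trivial cocycle, for every state preserving outer action with $\Gamma_{\mathrm{mod}} = \{e\}$.

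\textbf{The density step.} So fix a finite $F \subseteq \Gamma$, a tolerance $\delta > 0$, a parameter $\varepsilon > 0$ and vectors $\xi_1,\dots,\xi_k \in L^2(M,\varphi)\ominus\C\Omega$; the task is to build an $\alpha$-cocycle $w \in \mathcal{Z}$ that is $\ast$-strongly $\delta$-close to $1$ on $F$ and satisfies $|\langle w_{g_0}\,(U^{\alpha}_{g_0}\xi_i)\,w_{g_0}^{\ast},\,\xi_j\rangle| < \varepsilon$ for some $g_0$ and all $i,j$. The new ingredient — playing the role that $M$ being a factor plays in the type $\mathrm{II}_1$ argument — is that the conjugation action $v \mapsto (\xi \mapsto v\xi v^{\ast})$ of $\mathcal{U}(N)$ on $L^2(M,\varphi)\ominus\C\Omega$ is weakly mixing: on $L^2(N)\ominus\C\Omega$ this is the adjoint representation of the $\mathrm{II}_1$ factor $N$, which is weakly mixing, and on the orthogonal complement $L^2(M,\varphi)\ominus L^2(N)$ — which has no counterpart in the type $\mathrm{II}$ setting and is where $\sigma^{\varphi}$ acts nontrivially — it follows from $N' \cap M = \C$ together with the absence of finite sub-bimodules there, which is a manifestation of triviality of the bicentralizer. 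Granting this, for a suitably chosen $g_0$ there is $v \in \mathcal{U}(N)$ with $|\langle v\,(U^{\alpha}_{g_0}\xi_i)\,v^{\ast},\,\xi_j\rangle| < \varepsilon$ for all $i,j$. It then remains to install $v$ as (essentially) the value at $g_0$ of a genuine $\alpha$-cocycle that is $\ast$-strongly close to $1$ on $F$; this is done as in \cite{MV23}, via the Ornstein--Weiss/Rokhlin tower machinery for the free action $\alpha|_N$ of the amenable group $\Gamma$ on the $\mathrm{II}_1$ factor $N$, with $g_0$ chosen deep inside a F\o lner set, away from the part of $\Gamma$ on which the tower construction must perturb the cocycle near $F$. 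This shows each $O_{\varepsilon,\vec\xi}$ is dense, and Baire then produces $w \in \mathcal{Z}$ with $\beta^w$ weakly mixing; it is state preserving because $w$ is $\mathcal{U}(M^{\varphi})$-valued.

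\textbf{Main obstacle.} I expect the hard part to be the density step: on one hand proving that conjugation by $\mathcal{U}(M^{\varphi})$ weakly mixes \emph{all} of $L^2(M,\varphi)\ominus\C\Omega$, including the part orthogonal to $L^2(M^{\varphi})$ on which the modular automorphism group is active — this is where triviality of the bicentralizer is genuinely used, beyond merely supplying a good state — and on the other hand carrying out the Rokhlin-type gluing so that the resulting cocycle stays \emph{inside} $\mathcal{U}(M^{\varphi})$, so that the perturbed action remains state preserving, while keeping it controlled on the prescribed finite set. In the type $\mathrm{II}_1$ case of \cite{MV23} the first issue does not arise (there $L^2(M) = L^2(M^{\tau})$) and the second is unconstrained; the rigidity imposed by the modular flow is exactly what makes the type $\mathrm{III}$ construction more delicate.
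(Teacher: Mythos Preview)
Your setup step contains a genuine gap that the paper explicitly identifies as the main difficulty. You claim that by averaging a state with $M_\varphi'\cap M=\C$ along a F\o lner sequence you obtain a $\Gamma$-invariant state still enjoying $M_\varphi'\cap M=\C$. But averaging destroys this property: the centralizer of the averaged state need not contain the centralizer of the original state, and in fact one may well end up with $M_\varphi=\C$ (for instance if the averaged state is already weakly mixing for the modular flow). The paper singles this out in the introduction: when the invariant state has $M_\varphi'\cap M=\C$ the argument is indeed a ``rather direct adaptation'' of \cite{MV23} (this is exactly the content of Proposition~\ref{prop-case3}), but the theorem must also cover the case $M_\varphi=\C$, and there is no known way to upgrade an arbitrary invariant state to one with large centralizer. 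Everything downstream in your argument --- the freeness of $\alpha|_N$, the weak mixing of the $\mathcal U(N)$-conjugation on $L^2(M)\ominus\C\Omega$, the Rokhlin-type gluing inside $N$ --- rests on having this $N$, so the whole scheme stalls here.

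The paper's route around this is genuinely different from what you propose. Rather than fix a single good state, it works in the Polish space $\mathcal C_{\mathrm{state}}(\alpha)$ of \emph{pairs} $(v,\psi)$ with $\alpha^v$ preserving $\psi$, and runs Baire there. The density step is powered not by Rokhlin towers (note: \cite{MV23} itself does not use Ornstein--Weiss machinery but Popa's free independence result via \cite{PSV18}) but by free independence in the ultraproduct: using Lemma~\ref{lem-relative-bicentralizer} one has $\mathrm{BC}(M\subset M\rtimes\Gamma,\varphi)=\C$ for \emph{every} invariant $\varphi$, and then the Houdayer--Isono free independence theorem together with the fact that $(M^\omega)_{\varphi^\omega}$ is always diffuse (Lemma~\ref{lem-centralizer-diffuse}) supplies, inside $M^\omega$, a free Bernoulli subaction on which one finds the approximately invariant unitary needed for the density argument (Lemmas~\ref{lem-cocycle3.6} and~\ref{lem-cocycle3.5}). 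The point is that the ultraproduct manufactures a large centralizer even when $M_\varphi$ is trivial, which is precisely what your averaging cannot do.
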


Up to stabilizing by another type $\mathrm{III}_1$ factor, we obtain the following consequence.

\begin{corA}\label{corD}
Let $\alpha \colon \Gamma \actson M$ be a state preserving outer action of a countably infinite discrete amenable group on a type $\mathrm{III}_1$ factor with separable predual. Let $N$ be another type $\mathrm{III}_1$ factor with separable predual. Then the action $\alpha \,\overline\otimes\, \id_N \colon \Gamma \actson M \,\overline\otimes\, N$ admits a cocycle perturbation that is state preserving and weakly mixing.
\end{corA}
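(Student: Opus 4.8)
The plan is to deduce Corollary~\ref{corD} from Theorem~\ref{thmC} by replacing the given system with the stabilized one $\widetilde M := M \,\overline\otimes\, N$ carrying the action $\widetilde\alpha := \alpha \,\overline\otimes\, \id_N \colon \Gamma \actson \widetilde M$. First I would record that $\widetilde M$ fits the hypotheses of Theorem~\ref{thmC}: it is type $\mathrm{III}_1$ because $M$ is (the tensor product of $M$ with any factor again has Connes spectrum $[0,\infty)$); it has separable predual because both $M$ and $N$ do; and $\widetilde\alpha$ is state preserving, leaving invariant $\varphi\otimes\psi$ for any faithful normal state $\psi$ on $N$, where $\varphi$ is the $\alpha$-invariant state on $M$. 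Amenability of $\Gamma$ is assumed, so the remaining points are the outerness of $\widetilde\alpha$, the location of $\widetilde\Gamma_{\mathrm{mod}}$, and — crucially — the trivial bicentralizer of $\widetilde M$.

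For outerness and the modular bookkeeping I would use that the canonical homomorphism $\Out(M)\times\Out(N)\to\Out(M\,\overline\otimes\, N)$ is injective for factors with separable predual. In particular, since $\alpha_g$ is outer on $M$ for every $g\neq e$, the automorphism $\alpha_g\overline\otimes\id_N$ is outer on $\widetilde M$. The same injectivity pins down $\widetilde\Gamma_{\mathrm{mod}}$: an identity $\alpha_g\overline\otimes\id_N = \Ad(u)\circ\sigma_t^{\varphi\otimes\psi}$ forces $\Ad(u)=(\alpha_g\,\sigma_{-t}^{\varphi})\,\overline\otimes\,\sigma_{-t}^{\psi}$, whence $\sigma_{-t}^{\psi}$ is inner on $N$ and $\alpha_g$ coincides with $\sigma_t^{\varphi}$ in $\Out(M)$; thus $\widetilde\Gamma_{\mathrm{mod}}=\alpha^{-1}\bigl(\delta_M(T(N))\bigr)$, where $\delta_M\colon\mathbb R\to\Out(M)$ is the modular homomorphism and $T(N)$ the Connes $T$-invariant. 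Hence $\widetilde\Gamma_{\mathrm{mod}}$ is trivial whenever $T(N)\subseteq T(M)$ (in particular whenever $T(N)=\{0\}$), in which case Theorem~\ref{thmC} applies to $\widetilde\alpha$ directly; the remaining possibility, that $\widetilde\Gamma_{\mathrm{mod}}$ is a nontrivial subgroup of $\Gamma$, is a Case~2 configuration and is disposed of by the analysis of the case $\Gamma_{\mathrm{mod}}\neq\{e\}$ carried out in the sequel.

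The decisive input, and the reason for tensoring with $N$, is that $\widetilde M=M\,\overline\otimes\, N$ has trivial bicentralizer even though $M$ itself is not assumed to: the trivial bicentralizer property is stable under tensor products of type $\mathrm{III}_1$ factors with separable predual, a fact I would take from the preliminaries. Granting this, an application of Theorem~\ref{thmC} to $\widetilde\alpha\colon\Gamma\actson\widetilde M$ (supplemented by the Case~2 argument when $\widetilde\Gamma_{\mathrm{mod}}\neq\{e\}$) yields an $\widetilde\alpha$-cocycle $v\colon\Gamma\to\mathcal U(M\,\overline\otimes\, N)$ for which $g\mapsto\Ad(v_g)\circ(\alpha_g\otimes\id_N)$ is state preserving and weakly mixing, which is precisely Corollary~\ref{corD}. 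I expect the bicentralizer step to be the essential one: confirming that stabilization genuinely restores triviality of the bicentralizer is what makes the reduction to Theorem~\ref{thmC} both legitimate and, in the absence of a bicentralizer hypothesis on $M$, necessary, whereas the manipulations with $\Out$ and $\mathrm{Mod}$ above are routine.
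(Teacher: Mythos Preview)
Your overall strategy matches the paper's: verify the hypotheses of Theorem~\ref{thmC} for $\widetilde\alpha=\alpha\,\overline\otimes\,\id_N$ on $\widetilde M=M\,\overline\otimes\,N$, the key input being that $\widetilde M$ has trivial bicentralizer (this is \cite[Theorem~E(3)]{Ma23}, not something in the preliminaries, but you have the right statement). The computation showing $\widetilde\Gamma_{\mathrm{mod}}$ is controlled by $T(N)$ is also correct.

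The one genuine slip is your treatment of the ``remaining possibility'' $\widetilde\Gamma_{\mathrm{mod}}\neq\{e\}$. There is no remaining possibility: since $N$ is a type $\mathrm{III}_1$ factor, $T(N)=\{0\}$, so your own formula $\widetilde\Gamma_{\mathrm{mod}}=\alpha^{-1}\bigl(\delta_M(T(N))\bigr)$ gives $\widetilde\Gamma_{\mathrm{mod}}=\{e\}$ unconditionally. You note that $T(N)=\{0\}$ suffices but never observe that it is automatic here, and instead defer to a Case~2 fallback. That fallback does not exist in the generality you need: the Case~2 analysis in the paper (Theorem~\ref{thm-case2}) is carried out only for $\Gamma=\mathbb Z$, and moreover it produces an ergodic perturbation, not a weakly mixing one. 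So had $\widetilde\Gamma_{\mathrm{mod}}$ actually been nontrivial, your argument would not close. Once you add the single line ``$N$ is $\mathrm{III}_1$, hence $T(N)=\{0\}$, hence $\widetilde\Gamma_{\mathrm{mod}}=\{e\}$,'' the appeal to Case~2 can be deleted and the proof is complete via Theorem~\ref{thmC} alone, exactly as the paper does.
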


The original proof of \cite{MV23} in the case of $\mathrm{II}_1$ factors relies heavily on Popa's free independence result \cite{Po95} (and its variant in \cite{PSV18}). This free independence theorem has been generalized to the setting of type III factors under appropriate hypotheses in \cite{HI14}. Therefore, if a state preserving action $\alpha\colon \Gamma \actson (M,\varphi)$ in Theorem~\ref{thmC} further satisfies $M_\varphi' \cap M=\mathbb C$, then by making use of \cite{HI14}, the proof becomes a rather direct adaptation of the argument in \cite{MV23}.

However, the theorem also covers the case where $M_\varphi=\mathbb C$, in which \cite{HI14} cannot be applied directly. The key point of our proof is the Connes--St\o rmer transitivity theorem for type $\mathrm{III}_1$ factors, which ensures that all faithful states become unitarily conjugate at the level of ultrapowers. As a consequence, $(M^\omega)_{\varphi^\omega}$ is sufficiently large for every $\varphi$, and this fact is the key technical ingredient of our proof.

\bigskip
\noindent
{\bf Case 2: $\Gamma_{\mathrm{mod}}$ is nontrivial.}
When $\Gamma_{\mathrm{mod}}$ is nontrivial, it is not clear how to deal with this situation in full generality. We therefore restrict ourselves to the case $\Gamma=\mathbb Z$, that is, to single automorphisms. In this case $\Gamma_{\mathrm{mod}}$ has the form $p\mathbb Z \leq \mathbb Z$ for some $p>0$, and the action restricted to $p\mathbb Z$ agrees with a modular action. By \cite{MV23}, one obtains an ergodic cocycle for $p\mathbb Z$. We then analyze how such a cocycle can be lifted appropriately to the entire group $\mathbb Z$. This step does not appear in \cite{MV23} and requires a new argument, which will be carried out in Theorem~\ref{thm-case2}.
\bigskip

In the final section, we collect several related results, including the case of type $\mathrm{III}_\lambda$ factors and additional properties of actions and cocycles on ultrapowers.

\tableofcontents

\section{Preliminaries}
\label{Preliminaries}

	Let $M$ be a von Neumann algebra and $\varphi\in M_\ast$ a faithful state. The \textit{modular operator, conjugation}, and \textit{action} are denoted by $\Delta_\varphi$, $J_\varphi$, and $\sigma^\varphi$ respectively. The \textit{centralizer algebra} $M_\varphi$ is a fixed point algebra of the modular action. The norm $\|\, \cdot \, \|_\infty$ is the operator norm of $M$, and $\|\, \cdot \, \|_{2,\varphi}$ (or $\| \, \cdot \, \|_{\varphi}$) is the $L^2$-norm by $\varphi$. We use the notation $x\varphi y = \varphi(y\, \cdot \, x)$ for all $x,y\in M$ and $\varphi\in M_\ast$. See e.g.\ \cite{Ta03} for background on these objects.

\subsection*{Ergodic actions and cocycles}

Let $\Gamma \curvearrowright^\alpha M$ be a discrete group action on a von Neumann algebra. We say that a map $v\colon \Gamma \to M$ is an \textit{$\alpha$-cocycle} if it satisfies $v_{st}=v_s \alpha_s(v_t)$ for all $s,t\in \Gamma$. In this case, we can define the cocycle perturbed action 
	$$\alpha^v\colon \Gamma \to \Aut(M);\quad \alpha^v_g := \Ad(u_g)\circ \alpha_g  \ (g\in \Gamma).$$
If $\Gamma=\Z$ with $\alpha_1= \theta\in \Aut(M)$, then any unitary $u\in \mathcal U(M)$ gives rise to a unique $\alpha$-cocycle $v$ such that $v_1 = u$. Then, the $\Z$-action given by $\Ad(u)\circ \theta$ coincides with $\alpha^v$.

We say that a discrete group action $\alpha \colon \Gamma \actson M$ is \emph{ergodic} if the fixed point algebra $M^\alpha$ is trivial.
It is called \emph{state preserving} if there exists a faithful state $\varphi\in M_\ast$ such that $\varphi\circ \alpha_g=\varphi$ for every $g\in \Gamma$. 
In this case, we write $\alpha \colon \Gamma \actson (M,\varphi)$. We say that a state preserving action $\alpha \colon \Gamma \actson (M,\varphi)$ is \emph{weakly mixing}  (with respect to $\varphi$) if for any $\varepsilon>0$ and any finite subset $E\subset M$, there exists $g\in \Gamma$ such that 
	$$ \sum_{x,y\in E}| \varphi(\alpha_g(x)y) - \varphi(x)\varphi(y) | < \varepsilon.$$
Equivalently, the unitary representation $U^\alpha \colon \Gamma \actson L^2(M,\varphi)\ominus \C 1_M$ induced by $\alpha$ is weakly mixing. Note that any weakly mixing action is ergodic. For a single automorphism $\theta\in\Aut(M)$, we use the same terminology by regarding it as a $\mathbb Z$-action via $\theta^n$ for $n\in\mathbb Z$.

\begin{thm}[\cite{MV23}]\label{thm-MV23}
Let $M$ be a type $\mathrm{III}_1$ factor with separable predual.  Then there exists a faithful state $\varphi \in M_\ast$ such that for every $T>0$,  the automorphism $\sigma^\varphi_{T}$ is weakly mixing with respect to $\varphi$.
\end{thm}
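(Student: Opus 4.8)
The plan is to convert the conclusion into a spectral condition on a single modular operator and then to exhibit one state that meets it. For a faithful normal state $\varphi$, the automorphism $\sigma^\varphi_T$, viewed as a $\Z$-action, is weakly mixing with respect to $\varphi$ precisely when its Koopman unitary $\Delta_\varphi^{\ri T}$ has no eigenvector in $H_0:=L^2(M,\varphi)\ominus\C 1_M$ (a finite-dimensional invariant subspace of a single unitary contains an eigenvector, and conversely). If $E$ denotes the spectral measure of $\log\Delta_\varphi$ on $H_0$, then $\Delta_\varphi^{\ri T}$ has an eigenvector for some $T>0$ iff $E$ has an atom --- if the atom sits at $s_0\ne0$ take $T=2\pi/|s_0|$, and if at $0$ any $T$ works --- and conversely an atom always gives rise to one. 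Hence $\sigma^\varphi_T$ is weakly mixing for every $T>0$ iff $E$ is atomless. Since the $\Delta_\varphi$-fixed vectors in $L^2(M,\varphi)$ form exactly $\overline{M_\varphi 1_M}$, atomlessness forces $M_\varphi=\C$; and any further atom would sit at some $\log\lambda$ and be witnessed by a nonzero $x\in M$ with $\sigma^\varphi_t(x)=\lambda^{\ri t}x$. So it suffices to produce a faithful normal state whose modular group has trivial centralizer and admits no such eigen-operator with $\lambda\ne1$.

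To build such a state I would work in the continuous core $\widetilde M:=M\rtimes_{\sigma^{\varphi_0}}\R$ for a fixed reference state $\varphi_0$; since $M$ is type $\mathrm{III}_1$ this is a type $\mathrm{II}_\infty$ factor with separable predual, carrying the canonical trace $\tau$ and the trace-scaling dual flow $\theta$ ($\tau\op\theta_s=e^{-s}\tau$), with $M=\widetilde M^{\theta}$. Faithful normal states $\psi$ on $M$ correspond to nonsingular positive self-adjoint operators $h$ affiliated with $\widetilde M$ satisfying $\theta_s(h)=e^{-s}h$ (and a finiteness/normalization condition), via $h=d\widehat\psi/d\tau$, and then $\sigma^\psi_t|_M=\Ad(h^{\ri t})|_M$. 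A bounded $\lambda$-eigen-operator $x\in M\subseteq\widetilde M$ then satisfies $h^{\ri t}xh^{-\ri t}=\lambda^{\ri t}x$; polar-decomposing $x=v|x|$ and using that $\theta$ scales $\tau$, one sees that $|x|$ lies in $A:=\{h\}''$ and $v$ is a nonzero partial isometry that normalizes $A$ and implements the nontrivial rescaling $h\mapsto\lambda^{-1}h$ of its spectrum (the same computation applies at the $L^2$-level to eigenvectors of $\Delta_\psi$, and shows the relevant partial isometry already lies in $M$). The idea is therefore to choose $h$ so that $A$ is a \emph{singular} masa in $\widetilde M$: then no such $v$ can exist for $\lambda\ne1$, so $\Delta_\psi$ has no atoms away from $1$; and $M_\psi=\{h\}'\cap M=A\cap M=A^{\theta}$, where $A\cong L^\infty(\R,dx)$ --- the $\tau$-spectral measure of $\log h$ being forced to be proportional to $e^{-x}\,dx$ by the trace-scaling --- and $\theta$ acts on it by translation, which is ergodic, so $A^{\theta}=\C$. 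Thus the spectral measure of $\Delta_\psi$ on $H_0$ is atomless, and by the first paragraph $\sigma^\psi_T$ is weakly mixing for every $T>0$. (As a bonus: by the Connes--St\o rmer homogeneity theorem for type $\mathrm{III}_1$ factors every unitary conjugate $u\psi u^{*}$ has a unitarily equivalent modular operator and hence also works, and such conjugates are norm-dense among faithful normal states, so in fact a ``generic'' state does the job.)

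The main obstacle is the existence of this $h$: a nonsingular positive operator affiliated with the type $\mathrm{II}_\infty$ factor $\widetilde M$ that is $\theta$-equivariant ($\theta_s(h)=e^{-s}h$), generates a singular masa, and yields a finite normalized weight on $M$. Singular masas are ``generic'' in a type $\mathrm{II}_\infty$ factor, but the construction must be carried out compatibly with the flow $\theta$ and the finiteness constraint --- for instance by placing $h$ inside a $\theta$-invariant copy of the hyperfinite type $\mathrm{II}_\infty$ factor in $\widetilde M$ on which $\theta$ restricts to the standard trace-scaling Bogoliubov flow (which, under the core duality, is the CAR-algebra model of the hyperfinite type $\mathrm{III}_1$ factor equipped with a quasi-free state of purely continuous modular spectrum), and then upgrading the masa to be singular in all of $\widetilde M$ by an irreducibility/generic-perturbation argument. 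An equivalent approach, bypassing the core, runs a Baire-category argument directly on the faithful normal states: the states with atomless modular spectral measure form a $G_\delta$ set, and one proves density by perturbing a given state so as to spread each modular atom into continuous spectrum, the type $\mathrm{III}_1$ hypothesis supplying the multiplicity that makes this possible --- and this perturbation step is precisely the difficulty above.
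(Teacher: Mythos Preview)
Your first paragraph correctly reduces the statement to the existence of a faithful normal state whose modular $\R$-action is weakly mixing on $H_0=L^2(M,\varphi)\ominus\C1_M$, via the observation that $\Delta_\varphi^{\ri T}$ has an eigenvector for some (equivalently, every) $T>0$ iff the spectral measure of $\log\Delta_\varphi$ on $H_0$ has an atom. The paper makes the same reduction, phrased as $\{\Delta_\varphi^{\ri t}\}_{t\in\R}'\cap\K(H_0)=0$ and then passed to $T\Z$ by averaging over the compact group $\R/T\Z$; your spectral formulation is equivalent and arguably more transparent.

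The gap is in producing the state. The paper does not construct it: it simply cites \cite[Theorem~A and Lemma~2.1]{MV23}, which is exactly the assertion that a separable type $\mathrm{III}_1$ factor carries a faithful normal state with weakly mixing modular flow, and this is why the theorem is attributed to \cite{MV23}. You instead attempt a direct construction in the continuous core, seeking a $\theta$-equivariant $h$ generating a singular masa in $\widetilde M$. The outline is plausible and in the spirit of what \cite{MV23} actually does, but as you explicitly acknowledge, the existence of such an $h$ is ``the main obstacle'', and neither the singular-masa route nor the Baire-category alternative is carried to completion. So the proposal reduces the theorem to precisely the nontrivial content of \cite{MV23} and then stops. (Two technical points in the sketch would also need justification if you pursued it: that an $L^2$-eigenvector of $\Delta_\psi$ is always witnessed by an eigen-operator in $M$, and that singularity of a masa, which a priori constrains only the \emph{unitary} normalizer, also rules out the nontrivial \emph{partial} isometries your argument produces.)
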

\begin{proof}
By \cite[Theorem~A and Lemma~2.1]{MV23}, we choose $\varphi$ such that $\sigma^\varphi$ is weakly mixing with respect to $\varphi$ as an $\R$-action. It is equivalent to that $\{\Delta_{\varphi}^{\ri t}\}_{t\in \R }'\cap \K(H) = 0$, where $H :=L^2(M,\varphi)\ominus \C 1_M$. Since $\mathbb R / T\mathbb Z$ is compact, there exists a faithful conditional expectation from $\{\Delta_{\varphi}^{\ri Tn}\}_{n\in \Z }'\cap \K(H )$ onto $\{\Delta_{\varphi}^{\ri t}\}_{t\in \R }'\cap \K(H) $. We conclude that $\{\Delta_{\varphi}^{\ri Tn}\}_{n\in \Z }'\cap \K(H) = 0$, which means that $\sigma^\varphi|_{T\mathbb Z}$ is weakly mixing.
\end{proof}

\subsection*{Relative bicentralizers}

Let $M$ be a $\sigma$-finite von Neumann algebra and fix a free ultrafilter
$\omega$ on $\mathbb N$.  Consider
\[
\mathcal I_\omega=\{ (x_n)_n\in \ell^\infty(\mathbb N,M)\mid x_n\to 0 
\text{ $\ast$-strongly along }\omega\},
\]
and
\[
\mathcal M^\omega=\{ (x_n)_n\in \ell^\infty(\mathbb N,M)\mid 
(x_n)_n \mathcal I_\omega\subset \mathcal I_\omega,\;
\mathcal I_\omega (x_n)_n\subset \mathcal I_\omega\}.
\]
The ultraproduct von Neumann algebra $M^\omega$ \cite{Oc85} is defined as the
quotient $\mathcal M^\omega / \mathcal I_\omega$, and the class of
$(x_n)_n$ is written $(x_n)_\omega$.
Via constant sequences we regard $M$ as a subalgebra of $M^\omega$, with
the canonical expectation $E_\omega$ given by
$E_\omega((x_n)_\omega)=\sigma\hbox{-weak}\lim_{n\to\omega}x_n$.
Every faithful state $\varphi\in M_\ast$ induces a faithful normal state
$\varphi^\omega=\varphi\circ E_\omega$ on $M^\omega$.
Every discrete group action $\alpha \colon \Gamma \actson M$ extends on $M^\omega$ by $\alpha_g( (x_n)_\omega ) = (\alpha_g(x_n))_\omega$, and we denote it by $\alpha^\omega \colon \Gamma \actson M^\omega$. For more on ultraproducts, we refer the reader to \cite{Oc85,AH12}.

For an inclusion of von Neumann algebras $N\subset M$ with expectation $E_N$, we have a canonical inclusion $N^\omega\subset M^\omega$. Then for a faithful state $\varphi\in N_\ast$,  which is extended on $M$ via $E_N$, we can define $\sigma^{\varphi^\omega}$ that globally preserves $N^\omega$. We then define the \textit{relative bicentralizer algebra} (with respect to $\varphi$) as
	$$ \mathrm{BC}(N\subset M,\varphi) = (N^\omega)_{\varphi^\omega} '\cap M. $$
The definition does not depend on $\omega$. When $N=M$, we write $\mathrm{BC}(M,\varphi)$. We say that a type $\rm III_1$ factor $M$ has \emph{trivial bicentralizer} if $\mathrm{BC}(M,\varphi)=\C$ for some faithful state $\varphi\in M_\ast$. In this case, the bicentralizer is trivial for all faithful state $\varphi\in M_\ast$. See \cite{AHHM18} for more on relative bicentralizers.

The next lemma is a consequence of \cite{Ma23}.

\begin{lem}\label{lem-relative-bicentralizer}
	Let $\Gamma \actson M$ be a state preserving outer action of a countable group on a von Neumann algebra. Assume that:
\begin{itemize}
    \item $\Gamma_{\mathrm{mod}}$ is trivial;
    \item $M$ is a type $\mathrm{III}_1$ factor with separable predual and has trivial bicentralizer.
\end{itemize}
Then we have $\mathrm{BC}(M \subset M\rtimes_\alpha \Gamma, \varphi)=\mathbb C$ for all faithful state $\varphi\in M_\ast$.
\end{lem}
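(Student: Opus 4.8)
The plan is to reduce the statement to the absolute bicentralizer triviality of $M$ together with the triviality of $\Gamma_{\mathrm{mod}}$, via the machinery of \cite{Ma23} on relative bicentralizers for crossed products. Write $\tilde M := M \rtimes_\alpha \Gamma$ and extend $\varphi$ to $\tilde M$ via the canonical expectation $E_M \colon \tilde M \to M$; note $\sigma^{\tilde\varphi}_t = \sigma^\varphi_t$ on $M$ and $\sigma^{\tilde\varphi}_t(\lambda_g) = \lambda_g$ for the implementing unitaries $\lambda_g$, since the action is $\varphi$-preserving. The first step is to compute $(M^\omega)_{\varphi^\omega}' \cap \tilde M^\omega$ (which by definition is $\mathrm{BC}(M \subset \tilde M, \varphi)$ after intersecting with $\tilde M$, but it is cleaner to work in $\tilde M^\omega$ first and then apply $E_\omega$; in fact $\mathrm{BC}(M\subset\tilde M,\varphi) = (M^\omega)_{\varphi^\omega}'\cap \tilde M$ and one shows this equals the $E_\omega$-image of $(M^\omega)_{\varphi^\omega}'\cap \tilde M^\omega$). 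Expand an element $x \in (M^\omega)_{\varphi^\omega}' \cap \tilde M$ in its Fourier series $x = \sum_{g\in\Gamma} x_g \lambda_g$ with $x_g \in M$; commuting with $(M^\omega)_{\varphi^\omega}$ and using that $\lambda_g u \lambda_g^* = \alpha_g(u)$ forces each nonzero Fourier coefficient $x_g$ to satisfy $x_g u = \alpha_g^{-1}(u) x_g$ for all $u \in (M^\omega)_{\varphi^\omega}$, i.e.\ $x_g$ intertwines the identity embedding of $(M^\omega)_{\varphi^\omega}$ into $M^\omega$ with its image under $\alpha_g^{-1}$.

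The second step is to rule out all $g \neq e$. The coefficient $x_g$ being a nonzero partial intertwiner between $(M^\omega)_{\varphi^\omega} \hookrightarrow M^\omega$ and $\alpha_g^{-1}((M^\omega)_{\varphi^\omega}) = ((M^\omega)_{\varphi^\omega\circ\alpha_g}) = (M^\omega)_{\varphi^\omega}$ (using $\varphi\circ\alpha_g = \varphi$) means, after a polar decomposition and because $M$ is a factor with $\mathrm{BC}(M,\varphi) = \mathbb C$ — so $(M^\omega)_{\varphi^\omega}$ is a type $\mathrm{III}_1$ subfactor of $M^\omega$ with trivial relative commutant in $M$ (and suitably large, by the Connes--St{\o}rmer-type transitivity invoked in the paper) — that $\alpha_g^\omega$ restricted to $(M^\omega)_{\varphi^\omega}$ is inner via a unitary in $M^\omega$; tracking how this interacts with the modular action and using factoriality, this would force $\alpha_g \in \mathrm{Mod}(M)$, i.e.\ $g \in \Gamma_{\mathrm{mod}}$. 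Since $\Gamma_{\mathrm{mod}}$ is trivial, only $g = e$ survives, so $x = x_e \in M$, and then $x \in (M^\omega)_{\varphi^\omega}' \cap M = \mathrm{BC}(M,\varphi) = \mathbb C$. This gives $\mathrm{BC}(M \subset \tilde M, \varphi) = \mathbb C$.

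The key technical input is the correct form of the intertwining/rigidity statement from \cite{Ma23}: one needs that a nonzero element of $M^\omega$ intertwining $(M^\omega)_{\varphi^\omega}$ with $\alpha_g^\omega((M^\omega)_{\varphi^\omega})$ forces $\alpha_g$ into $\mathrm{Mod}(M)$. I expect \cite{Ma23} already contains a packaged statement of exactly this type — something like: for a state preserving action, $\mathrm{BC}(M\subset M\rtimes\Gamma,\varphi)$ decomposes as a crossed-product-like object over $\Gamma_{\mathrm{mod}}$ of $\mathrm{BC}(M,\varphi)$ — in which case the lemma is immediate from the two bulleted hypotheses. The main obstacle is therefore bookkeeping: correctly identifying which result of \cite{Ma23} applies, verifying its hypotheses (separable predual, $\sigma$-finiteness, the precise definition of relative bicentralizer used there matching ours, and the state-preserving condition guaranteeing $\sigma^{\tilde\varphi}$ globally preserves $M^\omega$ and commutes appropriately with the $\lambda_g$), and handling the $\omega$-independence claim. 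The analytic heart — that $(M^\omega)_{\varphi^\omega}$ is large enough to detect $\mathrm{Mod}(M)$ — is exactly the Connes--St{\o}rmer transitivity phenomenon the introduction flags as the crux, and it is presumably subsumed in \cite{Ma23}, so here it should be invoked as a black box rather than reproved.
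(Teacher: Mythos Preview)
Your Fourier-expansion-plus-intertwining outline is the right shape and matches what the paper's (two-sentence) proof defers to: the paper says this lemma is a consequence of \cite{Ma23} and points to the second half of the proof of \cite[Theorem~A$(3)\Rightarrow(4)$]{Is23} as the place where exactly this computation of $\mathrm{BC}(M\subset M\rtimes G,\varphi)$ is carried out; so your ``packaged statement'' lives there rather than directly in \cite{Ma23}. One slip: $(M^\omega)_{\varphi^\omega}$ is a type $\mathrm{II}_1$ factor (it is a state centralizer with a faithful normal trace), not type $\mathrm{III}_1$ --- this does not affect your argument, since what you actually use is that its relative commutant in $M$ is $\mathrm{BC}(M,\varphi)=\mathbb C$, which is correct.
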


\begin{proof}
In the second half of the proof of \cite[Theorem~A$(3)\Rightarrow(4)$]{Is23}, it is shown that  
$\mathrm{BC}(M\subset \widetilde{M},\varphi)=\mathbb C$ for $\widetilde M = M\rtimes G$ and some faithful $\varphi \in M_\ast$. We can apply the same argument here, and the conclusion does not depend on the choice of  $\varphi$.
\end{proof}

\subsection*{Free independence in ultraproducts}

Popa's free independence for ultraproducts was obtained in the framework of type $\mathrm{II}_1$ factors in \cite{Po95}.  Later, a generalization to type $\mathrm{III}$ factors was proved in \cite{HI14}.  The following proposition is a slight refinement of that result.

\begin{prop}\label{prop-free-independence1}
Let $M\subset \widetilde{M}$ be an inclusion of $\sigma$-finite von Neumann algebras with expectation $E$, and assume that $M$ is a diffuse factor. Let $X\subset \widetilde{M}^\omega$ be a bounded subset that is separable for the $\sigma$-$\ast$strong topology.

If there exists a faithful state $\varphi\in \widetilde{M}_\ast$ with $\varphi\circ E=\varphi$ such that $\mathrm{BC}(M\subset \widetilde{M}, \varphi)=\mathbb C$, then there exists $U\in \mathcal U\bigl((M^\omega)_{\varphi^\omega}\bigr)$ such that $X$ and $UXU^\ast$ are $\ast$-free with respect to $\varphi^\omega$.
\end{prop}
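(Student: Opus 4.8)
The proposition is a mild extension of the type~$\mathrm{III}$ free‑independence theorem of \cite{HI14}, so the plan is to run that argument while keeping track of the separable subset $X\subseteq\widetilde M^\omega$. Concretely, $U$ will be produced as a class $(u_n)_\omega$ with $u_n\in\mathcal U(M)$ asymptotically commuting with $\varphi$, assembled by a diagonal procedure from a single finite approximation lemma; the hypothesis $\mathrm{BC}(M\subset\widetilde M,\varphi)=\C$ is exactly what makes that lemma work.

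First I would reduce $\ast$‑freeness to a countable list of scalar equations. Replace $X$ by the unital $\ast$‑algebra $\mathcal A\subseteq\widetilde M^\omega$ it generates, and then by a countable $\|\cdot\|_{\varphi^\omega}$‑dense subset of its unit ball, which exists because $X$ is bounded and $\sigma$‑$\ast$strongly separable. Since $U$ centralizes $\varphi^\omega$, the algebra $U\mathcal AU^\ast$ carries the state $\varphi^\omega$ as well, so $\ast$‑freeness of $X$ and $UXU^\ast$ with respect to $\varphi^\omega$ is equivalent to the vanishing of $\varphi^\omega$ on all alternating products of $\varphi^\omega$‑centered elements drawn from $\mathcal A$ and from $U\mathcal AU^\ast$, with the $\mathcal A$‑factors taken from that fixed countable set. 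Enumerating these as conditions $(C_m)_{m\ge1}$, each $C_m$ involves finitely many fixed $a\in\mathcal A$ with $\varphi^\omega(a)=0$, and each such $a$ I represent by a bounded sequence $(a_n)_n$ in $\widetilde M$ with $\varphi(a_n)=0$.

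The heart is the finite approximation lemma, essentially \cite{HI14}: given a finite $\varphi$‑centered subset $F$ of the unit ball of $\widetilde M$, an integer $N$, and $\varepsilon>0$, there is $u\in\mathcal U(M)$ with $\|u\varphi-\varphi u\|<\varepsilon$ such that $|\varphi(c_1\,ud_1u^\ast\,c_2\,ud_2u^\ast\cdots)|<\varepsilon$ for every alternating word of length $\le N$ in $F$ and its $u$‑conjugates. Its proof is an averaging/incremental argument over the group of unitaries of $M$ that nearly commute with $\varphi$: one works in $(M^\omega)_{\varphi^\omega}$, which is diffuse and carries the trace $\varphi^\omega$ (this is where the diffuse‑factor hypothesis on $M$ is used), so that an abundance of Haar unitaries — indeed mutually free families of them — is available for conjugation; choosing a single near‑centralizing $u$ whose class realizes such a family across the distinct ``slots'' of a word decorrelates each $d_i$ from its neighbours, and the triviality of $\mathrm{BC}(M\subset\widetilde M,\varphi)=(M^\omega)_{\varphi^\omega}'\cap\widetilde M$ is precisely what forces the averaging to annihilate every $\varphi$‑centered element of $\widetilde M$, so that no residual correlation survives. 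One proceeds by induction on $N$, decorrelating one slot at a time while arranging not to recreate correlations at the other slots.

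Granting the lemma, I would set, for each $n$, $F_n$ to be the $\varphi$‑centered products of length $\le n$ formed from the finitely many representing sequences occurring in $C_1,\dots,C_n$, apply the lemma with $F=F_n$, $N=n$, $\varepsilon=1/n$ to obtain $u_n$, and put $U:=(u_n)_\omega$; the commutator estimate places $U$ in $\mathcal U((M^\omega)_{\varphi^\omega})$, and by construction $U$ satisfies every $C_m$, whence $X$ and $UXU^\ast$ are $\ast$‑free with respect to $\varphi^\omega$. The main obstacle is the mismatch between the hypothesis, which concerns $\widetilde M$, and the conclusion, which concerns $X\subseteq\widetilde M^\omega$: each finite step of the construction must be recast so that it involves only genuine elements of $\widetilde M$ and genuine near‑centralizing unitaries of $M$, which is achieved by a reindexation argument for ultraproducts (as in \cite{AH12}); stitching the steps back together along $\omega$ then requires care to keep the chosen $u_n$ near‑centralizing and to verify that the diagonal class indeed lands in the centralizer. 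A secondary technical point is extracting from the lemma enough uniformity to treat all $N$ slots of a word simultaneously, which is why one needs not merely a diffuse centralizer but a genuinely large one — another place where the diffuse‑factor hypothesis, together with the triviality of the relative bicentralizer, is used.
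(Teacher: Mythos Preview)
Your approach is sound in outline but takes a different route from the paper. You propose to re-derive a finite approximation lemma by hand (essentially re-proving the core of \cite{HI14} in the present setting, using that $(M^\omega)_{\varphi^\omega}$ is diffuse and that $(M^\omega)_{\varphi^\omega}'\cap\widetilde M=\C$), and then diagonalize once along representing sequences of the elements of $X$. The paper instead uses \cite{HI14} as a black box applied one level up: by \cite[Proposition~3.3]{AHHM18}, the hypothesis $\mathrm{BC}(M\subset\widetilde M,\varphi)=\C$ yields $(M^\omega)_{\varphi^\omega}'\cap\widetilde M^\omega=\C$, so the inclusion $(M^\omega)_{\varphi^\omega}\subset\widetilde M^\omega$ is irreducible with expectation and \cite[Theorem~A]{HI14} applies directly to $X\subset\widetilde M^\omega$, producing $U\in\mathcal U\bigl(((M^\omega)_{\varphi^\omega})^\omega\bigr)$; a two-step diagonal argument then pulls $U$ back to $(M^\omega)_{\varphi^\omega}$. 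The paper's route is shorter and avoids reopening the proof of \cite{HI14} (in particular the delicate induction on word length that you only sketch); your route is more self-contained and does the diagonalization in one pass, but your ``finite approximation lemma'' would need to be written out carefully --- the decorrelation-by-conjugation argument in \cite{HI14} is not quite as routine as your sketch suggests, and you should make explicit which inclusion you are feeding into that machinery, since $M\subset\widetilde M$ itself is not assumed irreducible.
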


\begin{proof}
Since $X$ admits a countable subset that is dense for the
$\sigma$-$\ast$strong topology, it suffices to verify the $\ast$-freeness
condition in the conclusion for such a countable subset.  Hence we may
assume that $X$ is countable.

Set $\mathcal M := M^\omega \subset \widetilde{M}^\omega =: \widetilde{\mathcal M}$
and $\Phi := \varphi^\omega$.  By \cite[Proposition~3.3]{AHHM18} we have
\[
\mathcal M_{\Phi}' \cap \widetilde{\mathcal M}
   \subset \mathrm{BC}(M\subset \widetilde{M},\varphi)^\omega
   = \mathbb C.
\]
Thus the inclusion $\mathcal M_{\Phi} \subset \widetilde{\mathcal M}$ is
irreducible and with expectation.  Viewing $X$ as a subset of
$\widetilde{\mathcal M}$, we may apply \cite[Theorem~A]{HI14} to obtain a
unitary $U \in \mathcal U\bigl((\mathcal M_{\Phi})^\omega\bigr)$ such that
$X$ and $UXU^\ast$ are $\ast$-free with respect to $\Phi^\omega$.

Since $X$ is countable and the $\ast$-freeness condition is given by
countably many equations, the conclusion follows from a standard diagonal
argument.  Indeed, choose a representing unitary sequence
$U=(U_n)_\omega$, and take sufficiently large $n_k$ so that $X$ and
$U_{n_k} X U_{n_k}^\ast$ satisfy, with respect to
$\Phi=\varphi^\omega$, finitely many of the $\ast$-freeness conditions
up to an error of $1/k$.  Moreover, we may assume that
$\|U_{n_k}\varphi^\omega - \varphi^\omega U_{n_k}\| < 1/k$ also holds.

For each $k$, we may then choose a unitary $u_k\in \mathcal U(M)$ among
the representing sequences of $U_{n_k}$ in $M^\omega$ so that $X$ and
$u_k X u_k^\ast$ satisfy the $\ast$-freeness conditions with respect to
$\varphi$ up to $1/k$, and also
$\|u_k \varphi - \varphi u_k\| < 1/k$.  Setting $U:=(u_k)_\omega$,
we obtain a unitary that satisfies the desired conclusion.
\end{proof}

\section{Lemmas on ultraproducts}

The aim of this section is to prove the following technical lemma.

\begin{lem}\label{lem-cocycle3.6}
	Let $\alpha\colon \Gamma \actson (M,\varphi)$ be a state preserving outer action of a countably infinite discrete group on a diffuse factor with separable predual and with a faithful normal state.  
Assume that $\Gamma$ is amenable and that $\mathrm{BC}(M\subset M\rtimes \Gamma,\varphi)=\C$. 
Then for every $\varepsilon>0$ and every finite subset $F\subset \Gamma$, there exist $h\in \Gamma$ and $U\in \mathcal U(M^\omega)$ such that:
\begin{itemize}
	\item $\varphi^\omega(U)=0$, \quad $\|U\varphi^\omega-\varphi^\omega U\|<\varepsilon$,\quad and \quad $\|\alpha_g^\omega(U)-U\|_{\varphi^\omega}<\varepsilon$ for all $g\in F$;
	\item $M$, $U$, and $\alpha_h^\omega(U)$ are $\ast$-free with respect to $\varphi^\omega$.
\end{itemize}
\end{lem}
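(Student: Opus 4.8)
My plan is to construct $U$ by combining two ingredients pulling in opposite directions: a Rokhlin-type unitary carrying the approximate invariance, and Popa's free-independence theorem (Proposition~\ref{prop-free-independence1}) supplying the $\ast$-freeness of $M$, $U$ and the translate $\alpha_h^\omega(U)$. First I would pass to $\widetilde M:=M\rtimes_\alpha\Gamma$ with canonical expectation $E\colon\widetilde M\to M$ and state $\widetilde\varphi:=\varphi\circ E$ (so $\widetilde\varphi|_M=\varphi$, $\widetilde\varphi\circ E=\widetilde\varphi$, the $\lambda_g$ lie in the centralizer of $\widetilde\varphi$, and $\widetilde\varphi^\omega|_{M^\omega}=\varphi^\omega$). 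By \cite[Proposition~3.3]{AHHM18}, the hypothesis $\mathrm{BC}(M\subset\widetilde M,\varphi)=\C$ yields $(M^\omega)_{\varphi^\omega}'\cap\widetilde M^\omega=\C$; consequently $(M^\omega)_{\varphi^\omega}$ is a diffuse factor on which $\varphi^\omega$ restricts to a trace, $(M^\omega)_{\varphi^\omega}\subset\widetilde M^\omega$ is irreducible with expectation, and $\alpha_g^\omega$ restricts to a properly outer automorphism of $(M^\omega)_{\varphi^\omega}$ for every $g\neq e$ (otherwise $\lambda_g$ would be a non-scalar in the relative commutant). In particular $\alpha^\omega$ restricts to a free action of the amenable group $\Gamma$ on $(M^\omega)_{\varphi^\omega}$.

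For the approximate invariance I would use the noncommutative Rokhlin lemma for free actions of amenable groups (Ocneanu; cf.\ \cite{Ma23}): given $\varepsilon$ and $F$, choose a finite $S\subset\Gamma$ with $|gS\triangle S|/|S|$ small for all $g\in F$ and a projection $q\in(M^\omega)_{\varphi^\omega}$ with $\{\alpha_s^\omega(q):s\in S\}$ mutually orthogonal and $\sum_{s\in S}\alpha_s^\omega(q)=1$ (a small remainder being harmless). Picking a Haar unitary $w$ in the diffuse corner $q(M^\omega)_{\varphi^\omega}q$ and setting $U_0:=\sum_{s\in S}\alpha_s^\omega(w)$, orthogonality of the tower makes $U_0$ a unitary in the centralizer with $\varphi^\omega(U_0^n)=0$ for $n\neq0$ and $\|\alpha_g^\omega(U_0)-U_0\|_{\varphi^\omega}$ controlled by $|gS\triangle S|/|S|$, hence $<\varepsilon$ for $g\in F$. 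This $U_0$ realizes the whole first bullet, but being $\Gamma$-equivariant it has $\alpha_h^\omega(U_0)$ close to $U_0$ for every $h$ in a Følner set, so it can never be $\ast$-free from $\{M,U_0\}$: a genuine perturbation is needed.

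To inject the freeness I would apply Proposition~\ref{prop-free-independence1} to $M\subset\widetilde M$ with the countable $\sigma$-$\ast$strongly dense set $X:=M_0\cup\{U_0\}\cup\{\lambda_g:g\in\Gamma\}\subset\widetilde M^\omega$ ($M_0$ a countable $\sigma$-$\ast$strongly dense subset of $M$), getting $V\in\mathcal U\bigl((M^\omega)_{\varphi^\omega}\bigr)$ with $A:=W^*(X)=\widetilde M\vee W^*(U_0)$ $\ast$-free from $VAV^*$ with respect to $\widetilde\varphi^\omega$, and set $U:=VU_0V^*$. Since $U\in\mathcal U\bigl((M^\omega)_{\varphi^\omega}\bigr)$ we get $U\varphi^\omega=\varphi^\omega U$ and $\varphi^\omega(U)=\varphi^\omega(U_0)=0$ by the trace property; and for the second bullet one checks, by an alternating-word computation in the free product $A*(VAV^*)$ — writing $W^*(\alpha_h^\omega(U))=\lambda_h W^*(U)\lambda_h^*$ with $\lambda_h\in A$, $W^*(U)\subset VAV^*$, and using $\widetilde\varphi^\omega(\lambda_h)=0$ for $h\neq e$ together with $\Ad\lambda_h$-invariance of $\widetilde\varphi^\omega$ — that for \emph{every} $h\neq e$ the three algebras $M$, $W^*(U)$, $W^*(\alpha_h^\omega(U))$ are mutually $\ast$-free with respect to $\widetilde\varphi^\omega$, hence to $\varphi^\omega$.

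The main obstacle, and the step I expect to be hard, is that $U=VU_0V^*$ is approximately $F$-invariant only when $V$ is: from $\alpha_g^\omega(U)=\alpha_g^\omega(V)\,\alpha_g^\omega(U_0)\,\alpha_g^\omega(V)^*$ with $\alpha_g^\omega(U_0)\approx U_0$ one needs $\|\alpha_g^\omega(V)-V\|_{\varphi^\omega}<\varepsilon$ for $g\in F$. This is subtle because $V$ cannot be chosen \emph{honestly} $F$-fixed — any $F$-fixed subalgebra of $(M^\omega)_{\varphi^\omega}$ has the $\lambda_g$ ($g\in F$) in its relative commutant inside $\widetilde M^\omega$, so Proposition~\ref{prop-free-independence1} does not apply there. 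The route I would take is to run the maximality argument behind \cite[Theorem~A]{HI14} and Proposition~\ref{prop-free-independence1} ``parametrically along $F$'': exploiting that $(M^\omega)_{\varphi^\omega}$ is globally $\alpha^\omega$-invariant and that $\alpha_g^\omega$ ($g\in F$) moves $X$ by only a $\|\cdot\|_{\varphi^\omega}$-small amount (it fixes $M$ setwise, permutes $\{\lambda_g\}$, and displaces $U_0$ by $<\varepsilon$), build a sequence of increasingly free approximate conjugators that is also asymptotically $F$-invariant and extract $V$ with the required near-invariance. Granting this, the two bullets follow after rescaling $\varepsilon$ at the outset and running the standard diagonal argument over representing sequences (exactly as in the proof of Proposition~\ref{prop-free-independence1}) to realize $U$ in $\mathcal U(M^\omega)$, to convert ``$U$ in the centralizer'' and ``$\varphi^\omega(U)=0$'' into the stated quantitative form, and to fix a concrete $h\neq e$.
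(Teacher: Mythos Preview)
Your plan correctly isolates the tension between approximate $F$-invariance and $\ast$-freeness, and your identification of the ``main obstacle'' is exactly right. Unfortunately, the step you flag as hard is a genuine gap, and the fix you sketch does not close it. Running the maximality argument of \cite{HI14} ``parametrically along $F$'' has no mechanism to force the conjugator $V$ to be approximately $F$-invariant: the free-independence construction produces $V$ by a spectral-gap/maximality procedure that is indifferent to the $\Gamma$-action, and the observation that $\alpha_g^\omega$ moves $X$ by little is misleading --- it permutes $\{\lambda_k\}$ and, more to the point, it moves $V$ itself, which is what matters in $\alpha_g^\omega(VU_0V^*)$. In fact exact $F$-invariance of $V$ is incompatible with the freeness you want: if $\alpha_g^\omega(V)=V$ for some $g\in F\setminus\{e\}$ then $V\lambda_g V^*=\lambda_g$, so $\lambda_g$ lies in both $A$ and $VAV^*$, which precludes $\ast$-freeness of $A$ and $VAV^*$. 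Approximate invariance therefore works \emph{against} freeness, and you have not given an argument that both can be achieved simultaneously.

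The paper avoids this entirely by reversing the order of the two ingredients. First it applies Proposition~\ref{prop-free-independence1} once, with no invariance constraint, to $X=M\rtimes\Gamma$: the resulting $V\in\mathcal U((M^\omega)_{\varphi^\omega})$ makes $M$ and the family $\{\alpha_g^\omega(VMV^*)\}_{g\in\Gamma}$ mutually $\ast$-free, so that $P:=\mathrm W^*\{\alpha_g^\omega(VMV^*)\}_g\subset M^\omega$ is state-preservingly isomorphic to the free Bernoulli product $\ast_\Gamma(M,\varphi)$ with $\alpha^\omega|_P$ the shift (Lemma~\ref{lem-free-independence2}). The approximate invariance is then obtained \emph{inside} $P$: since $\Gamma$ is amenable, the free Bernoulli shift is not strongly ergodic (this is \cite[Theorem~4.1]{PSV18}, extended to non-tracial base in Lemma~\ref{lem-amenable-strongly-ergodic}), so one finds $U\in\mathcal U(P_J)$ for some finite $J\subset\Gamma$ with $\varphi^\omega(U)=0$, $\|U\varphi^\omega-\varphi^\omega U\|<\varepsilon$, and $\|\alpha_g^\omega(U)-U\|_{\varphi^\omega}<\varepsilon$ for $g\in F$. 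Now the freeness comes for free from the free-product structure: pick any $h$ with $hJ\cap J=\emptyset$; then $U\in P_J$, $\alpha_h^\omega(U)\in P_{hJ}$, and $M$, $P_J$, $P_{hJ}$ are mutually $\ast$-free by construction. The point is that the almost-invariant unitary lives in the free Bernoulli action itself, so $V$ never has to carry any invariance.
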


We need several lemmas. The next lemma corresponds to \cite[Lemma 4.2]{PSV18} and identifies a structure in the ultraproduct that is isomorphic to a free Bernoulli action.

\begin{lem}\label{lem-free-independence2}
Keep the same setting as in Lemma \ref{lem-cocycle3.6}.  
Then there exists $V\in \mathcal U((M^\omega)_{\varphi^\omega})$ such that $M$ and the family $\{\alpha_g^\omega(VMV^*)\}_{g\in\Gamma}$ are mutually $\ast$-free inside $M^\omega$ with respect to $\varphi^\omega$.  
Moreover, if we set
\[
P := \mathrm W^*\{\alpha_g^\omega(VMV^*)\}_{g\in\Gamma},
\]
then we have a natural identification
\[
 (M\vee P,\varphi^\omega)\simeq (M,\varphi)\ast\Bigl(\ast_{g\in\Gamma}(M,\varphi)\Bigr),
\]
and the restriction of $\alpha^\omega$ to this subalgebra coincides with the free product action of $\alpha$ and the free Bernoulli shift action.
\end{lem}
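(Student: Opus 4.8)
The plan is to obtain $V$ from a \emph{single} application of Proposition~\ref{prop-free-independence1} to the crossed product, rather than constructing the free Bernoulli shift coordinate by coordinate. Let $\widetilde M:=M\rtimes_\alpha\Gamma$, with canonical conditional expectation $E\colon\widetilde M\to M$ and canonical unitaries $(\lambda_g)_{g\in\Gamma}$, and extend $\varphi$ to $\widetilde M$ by $\varphi\circ E$. Because the action is state preserving, $\varphi\circ\alpha_g=\varphi$, whence $\sigma^{\varphi}_t(\lambda_g)=\lambda_g$; thus $\lambda_g\in\widetilde M_\varphi\subset(\widetilde M^\omega)_{\varphi^\omega}$, while $\varphi^\omega(\lambda_g)=\varphi(E(\lambda_g))=0$ for $g\ne e$ and $\lambda_g M\lambda_g^{*}=\alpha_g(M)=M$. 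Since $\mathrm{BC}(M\subset\widetilde M,\varphi)=\C$ by hypothesis and $\widetilde M$ has separable predual, Proposition~\ref{prop-free-independence1} applied to the unit ball of $\widetilde M$ yields $V\in\mathcal U\bigl((M^\omega)_{\varphi^\omega}\bigr)$ such that $\widetilde M$ and $V\widetilde M V^{*}$ are $\ast$-free in $\widetilde M^\omega$ with respect to $\varphi^\omega$.

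Working inside $\widetilde M^\omega$, choose a representing sequence $V=(v_n)_\omega$ with $v_n\in\mathcal U(M)$; then $\alpha_g^\omega(V)=(\alpha_g(v_n))_\omega=\lambda_gV\lambda_g^{*}$, and therefore
\[
\alpha_g^\omega(VMV^{*})=\alpha_g^\omega(V)\,M\,\alpha_g^\omega(V)^{*}=\lambda_gV\lambda_g^{*}\,M\,\lambda_gV^{*}\lambda_g^{*}=\lambda_g(VMV^{*})\lambda_g^{*}\qquad(g\in\Gamma),
\]
using $\lambda_g^{*}M\lambda_g=M$; in particular each $\alpha_g^\omega(VMV^{*})$ lies in $M^\omega$. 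The heart of the argument is the claim that the family $\{M\}\cup\{\lambda_g(VMV^{*})\lambda_g^{*}\}_{g\in\Gamma}$ is mutually $\ast$-free with respect to $\varphi^\omega$. To see this, take an arbitrary reduced alternating word in these subalgebras and expand each letter lying in $\lambda_g(VMV^{*})\lambda_g^{*}$ as $\lambda_g\,b\,\lambda_g^{*}$ with $b\in VMV^{*}\ominus\C1$ (a centred element of $V\widetilde M V^{*}$, since $\lambda_g$ is in the centralizer), keeping $M$-letters in $\widetilde M$. After merging the maximal runs of consecutive $\widetilde M$-letters, each such run is a single element of $\widetilde M$ of the form $\lambda_g^{*}m\lambda_h$, $\lambda_g^{*}\lambda_h$ with $g\ne h$, $m\lambda_g$, $\lambda_g m$, or $\lambda_g$ (with $m\in M\ominus\C1$, and $\lambda_e=1$ suppressed), and the identities $E(\lambda_g)=\delta_{g,e}1$ and $\lambda_g^{*}m\lambda_h=\alpha_{g^{-1}}(m)\lambda_{g^{-1}h}$, combined with $\varphi\circ\alpha_{g^{-1}}=\varphi$, show that every such run is $\varphi^\omega$-centred; the only way a run could collapse to a scalar is $\lambda_g^{*}\lambda_g$ with no intervening $M$-letter, but that would mean the two neighbouring letters came from the \emph{same} algebra, contrary to alternation. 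Hence the expanded word is alternating with centred letters in the free pair $(\widetilde M,\,V\widetilde M V^{*})$, so its $\varphi^\omega$-value is $0$; this proves mutual $\ast$-freeness.

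Finally, since $V$ and each $\alpha_g^\omega(V)$ lie in $(M^\omega)_{\varphi^\omega}$ and $\alpha_g^\omega$ preserves $\varphi^\omega$, the map $x\mapsto\alpha_g^\omega(VxV^{*})$ is a state-preserving $\ast$-isomorphism of $(M,\varphi)$ onto $\bigl(\alpha_g^\omega(VMV^{*}),\varphi^\omega\bigr)$; together with the mutual freeness just established, this gives $(M\vee P,\varphi^\omega)\simeq(M,\varphi)\ast\bigl(\ast_{g\in\Gamma}(M,\varphi)\bigr)$ for $P:=\mathrm W^{*}\{\alpha_g^\omega(VMV^{*})\}_{g\in\Gamma}$. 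Moreover $\alpha_h^\omega$ restricts to $\alpha_h$ on the base copy $M$ and satisfies $\alpha_h^\omega\bigl(\alpha_g^\omega(VMV^{*})\bigr)=\alpha_{hg}^\omega(VMV^{*})$, so under this identification it permutes the free coordinates by left translation and is the identity on each coordinate $(M,\varphi)$; that is, the restriction of $\alpha^\omega$ to $M\vee P$ is exactly the free product of $\alpha$ with the free Bernoulli shift. I expect the main obstacle to be the combinatorial bookkeeping in the second paragraph: identifying precisely which runs of canonical-unitary and $M$-letters can occur and checking that all of them are centred — this is where the state-preserving hypothesis, via $\lambda_g\in\widetilde M_\varphi$, takes over the role played by traciality in the type $\mathrm{II}_1$ argument of \cite{PSV18}.
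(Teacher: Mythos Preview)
Your proof is correct and follows exactly the same approach as the paper: apply Proposition~\ref{prop-free-independence1} to $M\subset M\rtimes_\alpha\Gamma$ with $X$ the unit ball of the crossed product, then use $\alpha_g^\omega(VxV^*)=\lambda_g VxV^*\lambda_g^*$ to deduce mutual $\ast$-freeness. The paper simply asserts that this last step is ``easy to see'', whereas you have written out the combinatorial verification in detail; your case list for the merged $\widetilde M$-runs is slightly loose (e.g.\ the end-runs are $\lambda_g^{*}m$ and $\lambda_g^{*}$ rather than $\lambda_g m$), but since $\lambda_g^{*}=\lambda_{g^{-1}}$ every such run is of the form $\lambda_{g'}m\lambda_{h'}$ with possibly trivial factors, and your argument that none can collapse to a nonzero scalar is correct.
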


\begin{proof}
	We apply Proposition \ref{prop-free-independence1} to the inclusion $M\subset \widetilde{M}:=M\rtimes \Gamma$ and $X:=M\rtimes \Gamma$, and find a unitary $V\in \mathcal U((M^\omega)_{\varphi^\omega})$ such that $M\rtimes\Gamma$ and $V(M\rtimes\Gamma)V^*$ are $\ast$-free in $(M\rtimes\Gamma)^\omega$ with respect to $\varphi^\omega$.  
Using the relation $\alpha_g^\omega(VxV^*)=\lambda_gVxV^*\lambda_g^*$ for $x\in M$ and $g\in \Gamma$ together with the $\ast$-freeness, it is easy to  see that $M$ and $\{\alpha_g^\omega(VMV^*)\}_{g\in\Gamma}$ are mutually $\ast$-free inside $M^\omega$ with respect to $\varphi^\omega$.

Since $\varphi^\omega\circ\alpha_g^\omega\circ\Ad(V)|_M=\varphi$, there exists a state preserving isomorphism
\[
 (M,\varphi)\ni x \longmapsto \alpha_g^\omega(VxV^*) \in (\alpha_g^\omega(VMV^*),\varphi^\omega).
\]
It follows that $(P,\varphi^\omega)$ is isomorphic to the free product $\ast_{\Gamma}(M,\varphi)$. It is obvious that $\alpha^\omega$ restricts to the free Bernoulli action on $P$.
\end{proof}

The next lemma concerns a free Bernoulli action.

\begin{lem}\label{lem-amenable-strongly-ergodic}
Let $\Gamma$ be an amenable countable discrete group, and let $(N,\psi)$ be a diffuse von Neumann algebra with a faithful normal state.  
Consider the free Bernoulli shift
\[
 \alpha\colon \Gamma \actson (P,\varphi):=\ast_{g\in \Gamma}(N,\psi).
\]
Then for every $\varepsilon>0$ and finite subset $F\subset \Gamma$, there exist a finite subset $J\subset \Gamma$ and a unitary $u\in \ast_{g\in J}(N,\psi)$ such that
	\[
	 \varphi(u)=0,\quad \|\alpha_g(u)-u\|_\varphi<\varepsilon\ (g\in F),\quad \text{and}\quad \|u\varphi-\varphi u\|<\varepsilon.
	\]
\end{lem}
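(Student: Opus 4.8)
The plan is to build $u$ as an average of shifted copies of a single element of $(N,\psi)$, exploiting amenability of $\Gamma$ to make the average almost $F$-invariant while keeping the word-length structure of the free product under control. First I would fix a Følner set: by amenability choose a finite $K\subset\Gamma$ with $|gK\triangle K|<\delta|K|$ for all $g\in F$, where $\delta$ is to be chosen small depending on $\varepsilon$. Since $N$ is diffuse, pick a Haar unitary $a\in\mathcal U(N)$ with $\psi(a^k)=0$ for all $k\neq 0$; let $a_g\in P$ denote the copy of $a$ sitting in the $g$-th free factor, so that $\alpha_h(a_g)=a_{hg}$. The candidate is
\[
u:=\frac{1}{\sqrt{|K|}}\sum_{g\in K}a_g,
\]
which lies in $\ast_{g\in K}(N,\psi)$, so we may take $J:=K$. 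Because the $a_g$ for distinct $g$ are $\ast$-free centered Haar unitaries, they are in particular orthonormal in $L^2(P,\varphi)$, and each has $\varphi(a_g)=0$; hence $\varphi(u)=0$ and $\|u\|_\varphi=1$. A short computation gives $\|\alpha_h(u)-u\|_\varphi^2=|hK\triangle K|/|K|<\delta$ for $h\in F$, which handles the second condition once $\delta<\varepsilon^2$.

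The remaining points are that $u$ should be a unitary (not merely an $L^2$-unit vector) and that $\|u\varphi-\varphi u\|<\varepsilon$. For unitarity I would not insist that the displayed $u$ be exactly unitary; instead I would first produce a centered element $x$ in $\ast_{g\in K}(N,\psi)$ that is almost $F$-invariant in $\|\cdot\|_\varphi$ and satisfies $\|x\varphi-\varphi x\|$ small, and then pass to its unitary part. Concretely, since each free factor is a copy of $(N,\psi)$ and $N$ is diffuse, I can choose $a$ inside the centralizer $N_\psi$ if $N_\psi$ is diffuse; when $N_\psi$ is small this is the genuinely delicate point, and I would instead choose $a$ so that $\|a\psi-\psi a\|$ is as small as desired — this is possible because $N$ is diffuse and $\psi$ faithful normal, so one can find unitaries almost commuting with $\psi$ in norm and with $\psi(a)=0$ (e.g. spectral unitaries of a self-adjoint in a large abelian subalgebra, suitably averaged). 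The freeness then propagates the estimate: $\|u\varphi-\varphi u\|\le\frac{1}{\sqrt{|K|}}\sum_{g\in K}\|a_g\varphi-\varphi a_g\|=\sqrt{|K|}\,\|a\psi-\psi a\|$, so I must choose $a$ with $\|a\psi-\psi a\|<\varepsilon/\sqrt{|K|}$ \emph{after} fixing $K$. Since $K$ depends only on $F$ and $\delta$, this ordering of the choices is consistent.

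To upgrade to a unitary, take $u:=w|x|^{-1}\cdot$(small correction)? More cleanly: let $u$ be the unitary in the polar decomposition of the average $\frac{1}{\sqrt{|K|}}\sum_{g\in K}a_g$, which is invertible because its range projection is $1$ (the $a_g$ generate a diffuse free product and the average is injective on $L^2$); then $\|u-x\|_\varphi$ is controlled by how far $|x|$ is from $1$, which is small since $\|x\|_\varphi=1$ and the $a_g$ are free Haar unitaries making $x^*x$ concentrate near $1$ by free probability estimates (the operator $x^*x$ has the same distribution as a normalized sum of free Haar unitaries conjugated, with mean $1$ and variance $O(1/|K|)$). Transferring the three estimates from $x$ to $u$ then costs only an extra $O(1/\sqrt{|K|})$, absorbed into $\varepsilon$. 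The main obstacle I anticipate is precisely this last unitarization step together with the need to choose $a$ almost-commuting with $\psi$ in \emph{norm} on $N_\ast$ rather than merely in $\|\cdot\|_\psi$; handling it cleanly may require replacing the naive average by $u=\prod_{g\in K}$ of free Haar unitaries of a commuting-with-$\psi$ subalgebra, or invoking that in a diffuse free product one can find a genuine Haar unitary in the centralizer of $\varphi$ spanning a large Følner-averaged vector — I would structure the write-up so that the combinatorial Følner estimate is separated from the (standard but fiddly) construction of the seed unitary $a$.
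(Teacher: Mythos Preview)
Your Følner-averaging construction is the natural first attempt, and the first two properties ($\varphi(x)=0$ and $\|\alpha_g(x)-x\|_\varphi<\varepsilon$) are indeed easy for the normalized sum $x=\frac{1}{\sqrt{|K|}}\sum_{g\in K}a_g$. The proposal breaks down, however, at the unitarization step, and this is not a cosmetic issue but a structural one.

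You assert that $x^*x$ ``concentrates near $1$ with variance $O(1/|K|)$''. This is false: for free centered unitaries $a_g$ one has $\varphi(x^*x)=1$ but
\[
\varphi\bigl((x^*x)^2\bigr)=\frac{1}{|K|^2}\sum_{i,j,k,l}\varphi(a_i^*a_ja_k^*a_l)=\frac{2|K|^2-|K|}{|K|^2}=2-\frac{1}{|K|},
\]
since only the pairings $(i=j,\,k=l)$ and $(j=k,\,i=l)$ survive freeness. Hence $\mathrm{Var}_\varphi(x^*x)=1-1/|K|\to 1$, not $0$. In fact $x$ converges in $*$-distribution to a standard circular element $c$ as $|K|\to\infty$, so $|x|$ approaches the quarter-circle law on $[0,2]$ rather than a point mass at $1$; the polar part $u$ of $c$ is a Haar unitary that is \emph{free} from $|c|$, and $\|u-c\|_{2}^2=2-2\varphi(|c|)=2-\tfrac{16}{3\pi}\approx 0.30$. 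So the unitary obtained by polar decomposition is at a fixed positive $L^2$-distance from $x$, and none of your estimates transfer to it. Your fallback of taking an ordered product $\prod_{g\in K}a_g$ produces a unitary, but then $\alpha_h$ permutes the factors and changes their order, and in a free product reordering does not give an element close in $\|\cdot\|_\varphi$; the Følner bound is lost.

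The paper avoids both traps by never forming a linear average. It first observes (Lemma~\ref{lem-centralizer-diffuse}) that $(N^\omega)_{\psi^\omega}$ is always diffuse, so the tracial free Bernoulli shift $\beta\colon\Gamma\actson\ast_\Gamma\bigl((N^\omega)_{\psi^\omega},\psi^\omega\bigr)$ is available. For this tracial action, \cite[Theorem~4.1]{PSV18} gives that $(P^\omega)^{\alpha^\omega}$ is diffuse, which yields an almost invariant projection $p$ with $\varphi(p)=1/2$; one then approximates $p$ by a projection in a finite sub-free-product $\ast_{g\in J}$ (this uses finiteness crucially) and sets $u=2p-1$. The point is that a symmetry $2p-1$ is a unitary for free, while a F{\o}lner average of unitaries is not. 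Finally the tracial output is embedded back into $(P^\omega)_{\varphi^\omega}$ via the natural inclusion $\ast_{g\in J}(N^\omega)_{\psi^\omega}\subset P^\omega$ and read off on a representing sequence, which simultaneously takes care of the condition $\|u\varphi-\varphi u\|<\varepsilon$ without any separate ``seed unitary'' argument.
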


\begin{proof}
The case where $\psi$ is a trace was already treated in the proof of \cite[Theorem B]{MV23}.  For the reader's convenience, we briefly recall the argument in the tracial case.

Assume that $\psi$ is a trace.  
Since $(P^\omega)^{\alpha^\omega}$ is diffuse by \cite[Theorem 4.1]{PSV18}, there exists a projection $p\in P$ such that $\varphi(p)=1/2$ and $\|\alpha_g(p)-p\|_{\varphi}<\varepsilon$ for all $g\in F$.  
For each subset $J\subset \Gamma$, put $P_J:=\ast_{g\in J}(N,\psi)$.  
As the algebras $P_J$ for finite $J\subset\Gamma$ are von Neumann subalgebras whose union is $\ast$-strongly dense in the finite von Neumann algebra $P$, we can find a finite subset $J\subset\Gamma$ and a projection $q\in P_J$ such that $\|p-q\|_\varphi$ is sufficiently small. Since $P_J$ is diffuse, we can replace $p$ by $q$, and we may assume $p$ is contained in $P_J$.  Then $u:=2p-1$ is a unitary element satisfying the desired properties. 

We next see the general case.  By Lemma \ref{lem-centralizer-diffuse} below, $(N^\omega)_{\psi^\omega}$ is diffuse.  
Consider the free Bernoulli action on the finite von Neumann algebra
\[
 \beta\colon \Gamma \actson \ast_{g\in \Gamma}\bigl((N^\omega)_{\psi^\omega},\psi^\omega\bigr)=:(\mathcal N,\Psi).
\]
Since $\psi^\omega$ is a trace on $(N^\omega)_{\psi^\omega}$, the previous argument applies.  
Thus there exist a finite subset $J\subset\Gamma$ and a projection
$q\in\ast_{g\in J}\bigl((N^\omega)_{\psi^\omega},\psi^\omega\bigr)$ such that
$\Psi(q)=1/2$ and $\|\beta_g(q)-q\|_\Psi<\varepsilon$ for all $g\in F$.

Write $P=\ast_{g\in\Gamma}(N_g,\psi_g)$ with $(N_g,\psi_g)=(N,\psi)$.  
Then each inclusion $N_g\subset P$ extends to $N_g^\omega\subset P^\omega$ with $\varphi^\omega$-preserving expectation.  
Since $\{N_g^\omega\}_{g\in\Gamma}$ are $\ast$-free with respect to $\varphi^\omega$, we have a natural inclusion with $\varphi^\omega$-preserving expectation
\[
 \ast_{g\in J}\bigl((N^\omega)_{\psi^\omega},\psi^\omega\bigr)
 \subset \bigl(\ast_{g\in J}(N,\psi)\bigr)_{\varphi^\omega}
 \subset (P^\omega)_{\varphi^\omega}.
\]
Hence we may regard $q$ as an element of $(P^\omega)_{\varphi^\omega}$.  
Since $\beta$ is the restriction of $\alpha^\omega$, by choosing a representing sequence of $q$ in $P^\omega$, we obtain a projection $p\in \ast_{g\in J}(N,\psi)$ such that
\[
 |\varphi(p)-1/2|<\varepsilon,\qquad
 \|\alpha_g(p)-p\|_\varphi<\varepsilon\ (g\in F),\qquad
 \|p\varphi-\varphi p\|<\varepsilon.
\]
As $\ast_{g\in J}(N,\psi)$ is diffuse, by replacing $\varepsilon$ with a smaller one if necessary,  we may assume that $\varphi(p)=1/2$.  
Setting $u:=2p-1$, we obtain the desired conclusion.
\end{proof}
\begin{rem}\upshape\label{rem-free-Bernoulli-diffuse}
In the setting of Lemma~\ref{lem-amenable-strongly-ergodic}, we can see that 
$(P^\omega)^{\alpha^\omega}_{\varphi^\omega}$ is diffuse as follows.  
For the action $\beta$ considered in the proof above, we may apply 
\cite[Theorem~4.1]{PSV18}, hence $(\mathcal N^\omega)^{\beta^\omega}$ is diffuse. It contains a Haar unitary.  Representing such a unitary by a sequence, for any $\varepsilon>0$, $n\in\mathbb N$, and any finite set $F\subset \Gamma$, there exists a unitary $U\in\mathcal U(\mathcal N)$ such that
\[
  |\Psi(U^k)|<\varepsilon \quad (0<|k|\le n),
  \qquad 
  \|\beta_g(U)-U\|_{2,\Psi}<\varepsilon .
\]
Then viewing $U$ as an element of $(P^\omega)_{\varphi^\omega}$ and choosing a representing sequence, there exists $u\in\mathcal U(P)$ satisfying
\[
  |\varphi(u^k)|<\varepsilon \quad (0<|k|\le n),\qquad
  \|\alpha_g(u)-u\|_{2,\Psi}<\varepsilon,\qquad
  \|u\varphi-\varphi u\|<\varepsilon .
\]
It follows that $(P^\omega)^{\alpha^\omega}_{\varphi^\omega}$ contains a Haar unitary.
\end{rem}

\begin{lem}\label{lem-centralizer-diffuse}
Let $M$ be a diffuse $\sigma$-finite von Neumann algebra.  
Then for every faithful state $\varphi\in M_\ast$, the centralizer $(M^\omega)_{\varphi^\omega}$ is diffuse.
\end{lem}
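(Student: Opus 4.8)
The plan is to show that $N:=(M^\omega)_{\varphi^\omega}$ has no minimal projection. The restriction of $\varphi^\omega$ to $N$ is a faithful normal trace, so $N$ is finite and diffuseness is equivalent to the absence of minimal projections. If $p=(p_n)_\omega\in N$ were minimal, then $pM^\omega p$ is canonically the Ocneanu ultraproduct $\prod_{n\to\omega}(p_nMp_n,\varphi_{p_n})$ with $\varphi_{p_n}=\varphi(p_n)^{-1}\varphi(p_n\cdot p_n)$ and $\varphi(p_n)\to\varphi^\omega(p)>0$, the centralizer of its canonical state equals $pNp=\C p$ by minimality, and each $p_nMp_n$ is diffuse and $\sigma$-finite with a faithful normal state. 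Hence it suffices to establish: for every diffuse $\sigma$-finite $(P,\psi)$ and every $\varepsilon>0$ there is a projection $q\in P$ with $|\psi(q)-\tfrac12|<\varepsilon$ and $\|q\psi-\psi q\|<\varepsilon$. Indeed, applying this to each $(p_nMp_n,\varphi_{p_n})$ with $\varepsilon=1/n$ gives projections $q_n$; since each $q_n$ is a contraction with $\|q_n\varphi_{p_n}-\varphi_{p_n}q_n\|\to 0$, a routine computation shows that $(q_n)$ represents an element of the ultraproduct and that $(q_n)_\omega$ is a projection in the centralizer of the ultraproduct's canonical state taking value $\tfrac12$, contradicting triviality of that centralizer.

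\textbf{The semifinite model.} To produce such a $q$, I would first treat the case where $P$ is semifinite. Write $\psi=\mathrm{Tr}(h\,\cdot\,)$ with $\mathrm{Tr}$ the canonical trace and $h\ge 0$ a nonsingular density, and approximate $h$ in $L^1(\mathrm{Tr})=P_*$ by a self-adjoint $h_\varepsilon=\sum_j\lambda_jf_j$ with distinct $\lambda_j>0$ and orthogonal projections $f_j$ of finite trace. Then $\{h_\varepsilon\}'\cap P=\bigoplus_j f_jPf_j$, and each $f_jPf_j$ is a diffuse semifinite corner, so its trace realizes every value in $[0,\mathrm{Tr}(f_j)]$ on projections; pick $q_j\le f_j$ with $\mathrm{Tr}(q_j)=c\,\mathrm{Tr}(f_j)$ where $c=1/(2\mathrm{Tr}(h_\varepsilon))$, and set $q=\sum_j q_j$. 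Then $q$ commutes with $h_\varepsilon$, hence lies in the centralizer of $\psi_\varepsilon:=\mathrm{Tr}(h_\varepsilon\,\cdot\,)$, so $\psi_\varepsilon(q)=\tfrac12$ and $\|q\psi-\psi q\|\le 2\|\psi-\psi_\varepsilon\|=2\|h-h_\varepsilon\|_{1,\mathrm{Tr}}<2\varepsilon$, while $|\psi(q)-\tfrac12|\le\|h-h_\varepsilon\|_{1,\mathrm{Tr}}<\varepsilon$.

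\textbf{The general case and the main obstacle.} For general diffuse $\sigma$-finite $P$ I would reduce to the semifinite case by passing to the continuous core $\widetilde P=P\rtimes_{\sigma^\psi}\R$, which is semifinite and diffuse, and in which the modular flow of $\psi$ is implemented by the canonical unitaries $\lambda_t$, with $P_\psi=P\cap\{\lambda_t\}'$. Approximating the positive generator of $t\mapsto\lambda_t$ by finite-spectrum operators as in the semifinite case, one obtains inside $\widetilde P$ a projection commuting exactly with an approximant, hence almost-commuting with $\lambda_t$ for $|t|\le T$ (which, via the standard comparison between near-invariance under the modular flow and $\psi$-centrality, is what is needed). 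The main obstacle — and where real work is required — is to arrange this projection to lie in $P$ itself rather than merely in $\widetilde P$, while keeping its $\psi$-value near $\tfrac12$: this forces one to use the structure $P=\widetilde P^{\hat\sigma}$ (the fixed points of the dual flow) and the canonical operator-valued weight $\widetilde P\to P$, and to handle the normalization mismatch coming from the fact that the unit of $P$ has infinite weight in the core. I expect this transfer back to $P$, together with the bookkeeping translating ``almost commutes with $\lambda_t$ on $[-T,T]$'' into ``$\|q\psi-\psi q\|$ small'', to be the technical heart of the argument.
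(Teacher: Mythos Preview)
Your reduction to the local approximation statement and your treatment of the semifinite case are correct. The general (type $\mathrm{III}$) case, however, is genuinely incomplete, and the obstacle you identify is not a matter of bookkeeping. Pushing a projection from the core $\widetilde P$ back into $P=\widetilde P^{\hat\sigma}$ while keeping both its $\psi$-value near $1/2$ and its approximate $\psi$-centrality cannot be done by averaging over the dual flow (that destroys the projection property), and there is no evident finite perturbation argument that repairs this. More to the point, for a type $\mathrm{III}_1$ factor $P$ with $P_\psi=\C$, your local claim is \emph{equivalent} to the nontriviality of $(P^\omega)_{\psi^\omega}$; this is Connes's theorem that the asymptotic centralizer of a type $\mathrm{III}_1$ factor is a type $\mathrm{II}_1$ factor, which is a deep result rather than something one extracts from the core by elementary means. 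Your proposed route would amount to reproving it.

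The paper avoids this detour by performing the case split at the level of $M_\varphi$ rather than $(M^\omega)_{\varphi^\omega}$. If $M_\varphi$ is diffuse, then since $M_\varphi\subset (M^\omega)_{\varphi^\omega}$ is an inclusion with expectation, the centralizer of the ultraproduct is diffuse immediately; this already subsumes the semifinite case, because for diffuse semifinite $M$ one always has $M_\varphi$ diffuse. If instead $M_\varphi$ contains a minimal projection $p$, then $(pMp)_\varphi=pM_\varphi p=\C p$ while $pMp\neq\C p$, which forces $pMp$ to be a type $\mathrm{III}_1$ factor; one then invokes the classical fact that $(pMp)^\omega_{\varphi^\omega}$ is a type $\mathrm{II}_1$ factor. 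The general case is obtained by decomposing $M_\varphi$ into its diffuse and atomic parts. Thus the unavoidable deep input is the same type $\mathrm{III}_1$ theorem, but the paper reaches it in one clean step instead of through an unfinished passage via the continuous core.
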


\begin{proof}
	This lemma is well known to experts, see the proof of \cite[Lemma 3.6]{Ma18} for the reference. If $M_\varphi$ is diffuse, then since the inclusion $M_\varphi \subset (M^\omega)_{\varphi^\omega}$ is with expectation, it follows that $(M^\omega)_{\varphi^\omega}$ is also diffuse. Suppose next that $M_\varphi$ contains a minimal projection $p$. Then $(pMp)_\varphi = pM_\varphi p = \C p$, but $pMp\neq \C p$ as $M$ is diffuse.  Hence $pMp$ is a type ${\rm III}_1$ factor, and consequently $(pMp)^\omega_{\varphi^\omega}$ is a type ${\rm II}_1$ factor.

For the general case, let $z$ be the central projection of $M_\varphi$ such that $M_\varphi z$ is atomic and $M_\varphi z^\perp$ is diffuse. Write $z=\sum_i p_i$ as a sum of minimal projections in $M_\varphi z$.  From the argument above, both $z^\perp (M^\omega)_{\varphi^\omega} z^\perp$ and each $p_i (M^\omega)_{\varphi^\omega} p_i$ are diffuse.  Since $1=z^\perp+\sum_i p_i$ is a decomposition into orthogonal projections, it follows that $(M^\omega)_{\varphi^\omega}$ is diffuse.
\end{proof}

We are now ready to prove Lemma \ref{lem-cocycle3.6}.

\begin{proof}[{\bf Proof of Lemma \ref{lem-cocycle3.6}}]
Using Lemma \ref{lem-free-independence2}, choose $V\in \mathcal U((M^\omega)_{\varphi^\omega})$ and define $P$ as in the lemma. For each finite subset $J\subset \Gamma$, let $P_J\subset M^\omega$ denote the von Neumann algebra generated by $\{\alpha_g^\omega(VMV^*)\}_{g\in J}$. Since $\Gamma$ is amenable, we can apply Lemma \ref{lem-amenable-strongly-ergodic} to $P$. Thus for any $\varepsilon>0$, there exist a finite subset $J\subset \Gamma$ and a unitary $U\in \mathcal U(P_J)$ such that
\[
 \varphi^\omega(U)=0,\quad \|\alpha_g^\omega(U)-U\|_{\varphi^\omega}<\varepsilon\ (\forall g\in F),\quad 
 \|(U\varphi^\omega-\varphi^\omega U)|_P\|<\varepsilon.
\]
Since $\Gamma$ is infinite, we can choose $h\in\Gamma$ such that $hJ\cap J=\emptyset$.  
Then $\alpha_h^\omega(U)\in \mathcal U(\alpha_h^\omega(P_J))=\mathcal U(P_{hJ})$, and hence $U$ and $\alpha_h^\omega(U)$ are $\ast$-free from each other.  
Since $M$ is $\ast$-free from $P$, we conclude that $M$, $U$, and $\alpha_h^\omega(U)$ are mutually $\ast$-free.

Finally, since $\alpha$ preserves $\varphi$, the subalgebra $P\subset M^\omega$ admits a $\varphi^\omega$-preserving conditional expectation.  
Therefore,
\[
 \|U\varphi^\omega-\varphi^\omega U\|=\|(U\varphi^\omega-\varphi^\omega U)|_P\|<\varepsilon,
\]
which establishes the last inequality.  
This completes the proof.
\end{proof}

\section{Cocycles for state preserving actions}

Let $\alpha\colon \Gamma \actson M$ be an action of a discrete group on a
$\sigma$-finite von Neumann algebra. We denote by $S_{\mathrm f}(M)\subset M_\ast$ the set of all faithful normal states on $M$, and by $\mathcal C(\alpha)$ the set of all $\alpha$-cocycles. We equip $S_{\mathrm f}(M)$ with the norm topology from $M_\ast$, and $\mathcal C(\alpha)$ with the pointwise $\sigma$-$\ast$strong topology. If $M$ has separable predual and $\Gamma$ is countable, then both $S_{\mathrm f}(M)$ and $\mathcal C(\alpha)$ are Polish spaces (see \cite[Lemma~2.6]{MV23}).

Suppose that $\alpha$ is a state preserving action. Then we define
\[
 \mathcal C_{\mathrm{state}}(\alpha)
 \subset \mathcal C(\alpha)\times S_{\mathrm f}(M)
\]
to be the set of all pairs $(v,\psi)$ such that the perturbed action $\Ad(v)\circ \alpha = \alpha^v$ preserves $\psi$.  It is easy to see that $\mathcal C_{\mathrm{state}}(\alpha)$ is closed in the product topology on $\mathcal C(\alpha)\times S_{\mathrm f}(M)$.  
In particular, if $M$ has separable predual and $\Gamma$ is countable, $\mathcal C_{\mathrm{state}}(\alpha)$ is a Polish space.

We further define the following subsets:
\begin{align*}
 \mathcal C_{\mathrm{erg}}(\alpha)
 &:=\{(v,\psi)\in\mathcal C_{\mathrm{state}}(\alpha)
    \mid \alpha^v \text{ is ergodic}\},\\
 \mathcal C_{\mathrm{wm}}(\alpha)
 &:=\{(v,\psi)\in\mathcal C_{\mathrm{state}}(\alpha)
    \mid \alpha^v \text{ is weakly mixing with respect to }\psi\}.
\end{align*}
It is clear that $\mathcal C_{\mathrm{wm}}(\alpha)\subset
\mathcal C_{\mathrm{erg}}(\alpha)$.

\begin{rem}\upshape
	For each $(v,\psi)\in\mathcal C_{\mathrm{erg}}(\alpha)$, the state $\psi$ is uniquely determined by $v$. Indeed, if $\alpha\colon \Gamma\actson (M,\varphi)$ is a state preserving ergodic action and if there is another invariant state $\varphi'\in M_\ast$, then $\varphi'=\varphi$. This follows from the fact that the Connes cocycle between
$\varphi'$ and $\varphi$ must be scalar valued by the ergodicity.
\end{rem}

By the next lemma, both $\mathcal C_{\mathrm{erg}}(\alpha)$ and
$\mathcal C_{\mathrm{wm}}(\alpha)$ can be expressed as countable
intersections of open sets.

\begin{lem}\label{lem-cocycle2.5}
Under the above setting, for each $x\in M$, finite subset
$E\subset M$, and $\varepsilon>0$, define
\begin{align*}
 \mathcal U(x)
 &:=\{(v,\psi)\in\mathcal C_{\mathrm{state}}(\alpha)\mid
     \exists h\in\Gamma\ \text{s.t.}\
     \|\alpha^v_h(x)-x\|_\psi>\|x-\psi(x)\|_\psi\},\\
 \mathcal V(E,\varepsilon)
 &:=\{(v,\psi)\in\mathcal C_{\mathrm{state}}(\alpha)\mid
     \exists h\in\Gamma\ \text{s.t.}\
     \sum_{a,b\in E}|
       \psi(\alpha^v_h(a)b)-\psi(a)\psi(b)
     |<\varepsilon\}.
\end{align*}
Then the following hold:
\begin{enumerate}
 \item $\mathcal U(x)$ and $\mathcal V(E,\varepsilon)$ are open subsets
       of $\mathcal C_{\mathrm{state}}(\alpha)$.
 \item For any $\sigma$-$\ast$strongly dense subset $X\subset M\setminus\C$
       and sequence $\varepsilon_n\to0$, we have
 \[
   \mathcal C_{\mathrm{erg}}(\alpha)
   =\bigcap_{x\in X}\mathcal U(x),\qquad
   \mathcal C_{\mathrm{wm}}(\alpha)
   =\bigcap_{\substack{E\subset X\ \mathrm{finite}\\ n\in\N}}\mathcal V(E,\varepsilon).
 \]
\end{enumerate}
\end{lem}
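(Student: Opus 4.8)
\emph{Part (1).} I would write $\mathcal U(x)=\bigcup_{h\in\Gamma}\{(v,\psi)\mid\|\alpha^v_h(x)-x\|_\psi>\|x-\psi(x)\|_\psi\}$ and exhibit $\mathcal V(E,\varepsilon)$ likewise as a union over $h\in\Gamma$ of sets cut out by one strict inequality, so that it is enough to check that for each fixed $h$ the maps $(v,\psi)\mapsto\|\alpha^v_h(x)-x\|_\psi$, $(v,\psi)\mapsto\|x-\psi(x)\|_\psi$, and $(v,\psi)\mapsto\sum_{a,b\in E}|\psi(\alpha^v_h(a)b)-\psi(a)\psi(b)|$ are continuous on $\mathcal C_{\mathrm{state}}(\alpha)$. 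This is routine: $\alpha^v_h(y)=v_h\alpha_h(y)v_h^\ast$ depends only on $v_h$, the evaluation $v\mapsto v_h$ is continuous, the $v_h$ are unitaries (a bounded set), so $v\mapsto\alpha^v_h(y)$ is $\ast$-strongly continuous for each $y\in M$; combined with normality of states, the bound $|\psi(z)-\psi'(z)|\le\|\psi-\psi'\|\,\|z\|$, and $\sigma$-weak convergence of products of bounded $\ast$-strongly convergent nets, all three maps are continuous, and hence $\mathcal U(x)$ and $\mathcal V(E,\varepsilon)$ are open.

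\emph{Part (2): passing to $L^2$ and the ergodic identity.} For $(v,\psi)\in\mathcal C_{\mathrm{state}}(\alpha)$ let $U^v\colon\Gamma\to\mathcal U(L^2(M,\psi))$ be the canonical implementation of $\alpha^v$; it fixes the standard cyclic vector $\xi_\psi$ and restricts to a unitary representation on $H_\psi:=L^2(M,\psi)\ominus\C\xi_\psi$, and for $x\in M$ write $\hat x:=x\xi_\psi-\psi(x)\xi_\psi\in H_\psi$. A one-line computation gives $\|\alpha^v_h(x)-x\|_\psi^2=2\|\hat x\|^2-2\Re\langle U^v_h\hat x,\hat x\rangle$ and $\|x-\psi(x)\|_\psi^2=\|\hat x\|^2$, so for $x\notin\C$,
\[
(v,\psi)\in\mathcal U(x)\iff \exists h\in\Gamma:\ \Re\langle U^v_h\hat x,\hat x\rangle<\tfrac12\|\hat x\|^2 .
\]
I would use the standard fact that for a unitary representation $U$ of $\Gamma$ on $H$ and $\eta\in H$, the minimal-norm element of the closed convex hull of the orbit $\{U_h\eta\}$ is $U$-fixed and equals $P_{H^U}\eta$; in particular $P_{H^U}\eta=0$ forces $\Re\langle U_h\eta,\eta\rangle<\tfrac12\|\eta\|^2$ for some $h$. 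For $\bigcap_{x\in X}\mathcal U(x)\subset\mathcal C_{\mathrm{erg}}(\alpha)$, argue by contraposition: if $\alpha^v$ is not ergodic, pick $m\in M^{\alpha^v}\setminus\C$, so $0\neq\hat m\in H_\psi^{U^v}$; choosing $x\in X$ with $\|\hat x-\hat m\|<\tfrac13\|\hat m\|$ (possible since $X$ is $\sigma$-$\ast$strongly dense in $M\setminus\C$) one gets $\|U^v_h\hat x-\hat x\|\le2\|\hat x-\hat m\|<\tfrac23\|\hat m\|<\|\hat x\|$ for \emph{all} $h$, so $(v,\psi)\notin\mathcal U(x)$. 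The reverse inclusion rests on the implication $M^{\alpha^v}=\C\Rightarrow H_\psi^{U^v}=0$; granting it, for $(v,\psi)\in\mathcal C_{\mathrm{erg}}(\alpha)$ and $x\in X$ we have $P_{H_\psi^{U^v}}\hat x=0$, hence $(v,\psi)\in\mathcal U(x)$.

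\emph{The main obstacle: $M^{\alpha^v}=\C\Rightarrow H_\psi^{U^v}=0$.} Suppose $0\neq\zeta\in L^2(M,\psi)$ is $U^v$-fixed with $\zeta\perp\xi_\psi$, and put $c=\|\zeta\|^2$. Since $U^v_h m(U^v_h)^\ast=\alpha^v_h(m)$ for $m\in M$, the analogous statement holds on $M'$ with $\alpha^v$ transported by $J_\psi$ (so $(M')^{\alpha^v}=J_\psi M^{\alpha^v}J_\psi=\C$), and as $\zeta$ is $U^v$-fixed the normal positive functional $m'\mapsto\langle m'\zeta,\zeta\rangle$ on $M'$ is $\alpha^v$-invariant. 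Now $M^{\alpha^v}=\C$ forces the only $\alpha^v$-invariant normal state on $M$ to be $\psi$: such a state $\varphi$ is faithful (its support projection is $\alpha^v$-invariant, hence $=1$), the Connes cocycle $[D\varphi:D\psi]_t$ is $\alpha^v$-invariant, hence a continuous character of $\R$, i.e.\ $[D\varphi:D\psi]_t=e^{ict}1=[D(e^{c}\psi):D\psi]_t$, so $\varphi=e^{c}\psi$ and therefore $c=0$, $\varphi=\psi$; likewise $(M')^{\alpha^v}=\C$ makes $\langle\,\cdot\,\xi_\psi,\xi_\psi\rangle|_{M'}$ the unique $\alpha^v$-invariant normal state on $M'$. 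Hence $\langle m'\zeta,\zeta\rangle=c\langle m'\xi_\psi,\xi_\psi\rangle$ for all $m'\in M'$, so $m'\xi_\psi\mapsto c^{-1/2}m'\zeta$ extends to an isometry $V\in(M')'=M$ with $V\xi_\psi=c^{-1/2}\zeta$; applying $U^v_h$ to $\zeta=c^{1/2}V\xi_\psi$ and using $U^v_h\zeta=\zeta$, $U^v_h\xi_\psi=\xi_\psi$ gives $\alpha^v_h(V)\xi_\psi=V\xi_\psi$, so $V\in M^{\alpha^v}=\C$ is a unimodular scalar and $\zeta\in\C\xi_\psi$, contradicting $\zeta\perp\xi_\psi$. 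In the tracial setting of \cite{MV23} this implication is immediate from the calculus of $\tau$-measurable operators; the argument just sketched is the substitute in the type $\mathrm{III}$ case, and it is the step I expect to be the main difficulty.

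\emph{The weak-mixing identity.} The inclusion $\mathcal C_{\mathrm{wm}}(\alpha)\subset\bigcap_{E\subset X\ \mathrm{finite},\,n}\mathcal V(E,\varepsilon_n)$ is immediate, since each $\mathcal V(E,\varepsilon_n)$-condition is a special case of the defining condition of weak mixing. For the converse, use $\psi(\alpha^v_h(a)b)-\psi(a)\psi(b)=\langle U^v_{h^{-1}}\hat b,\widehat{a^\ast}\rangle$, whose modulus is $\le\|\hat b\|\,\|\widehat{a^\ast}\|$ \emph{uniformly in $h$}: given finite $E\subset M$ and $\varepsilon>0$, approximate each non-scalar $a\in E$ by a distinct $a'\in X$ with $\|a\xi_\psi-a'\xi_\psi\|$ and $\|a^\ast\xi_\psi-(a')^\ast\xi_\psi\|$ so small that $\sum_{a,b\in E}|\psi(\alpha^v_h(a)b)-\psi(a)\psi(b)|$ differs from $\sum_{a',b'\in E'}|\psi(\alpha^v_h(a')b')-\psi(a')\psi(b')|$ by less than $\varepsilon/2$ for every $h$ (scalar elements contribute nothing), and then for $\varepsilon_n<\varepsilon/2$ the relation $(v,\psi)\in\mathcal V(E',\varepsilon_n)$ supplies the required $h$. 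Together with the ergodic identity this finishes the proof.
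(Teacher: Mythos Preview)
Your proof is correct, and in parts (1), the reverse inclusion of the ergodic identity, and the weak-mixing identity, it follows essentially the same lines as the paper (which is simply more terse: ``it is easy to see that the complements are closed'' and ``follows easily by definition'').

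The one genuine difference is in the forward inclusion $\mathcal C_{\mathrm{erg}}(\alpha)\subset\bigcap_{x\in X}\mathcal U(x)$. The paper does not work in the $L^2$-picture at all: it observes that when $\alpha^v$ is ergodic, $\psi$ coincides with the $\psi$-preserving conditional expectation onto $M^{\alpha^v}=\C$, and then invokes \cite[Lemma~5.1]{AHHM18} directly to produce $h\in\Gamma$ with $\|x-\alpha^v_h(x)\|_\psi>\|x-\psi(x)\|_\psi$. You instead translate everything to the unitary representation $U^v$ on $H_\psi$, identify the $\mathcal U(x)$-condition as $\Re\langle U^v_h\hat x,\hat x\rangle<\tfrac12\|\hat x\|^2$, and reduce to the implication $M^{\alpha^v}=\C\Rightarrow H_\psi^{U^v}=0$, which you establish by the uniqueness-of-invariant-state argument via Connes cocycles and the isometry $V\in M$. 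Your route is self-contained and makes explicit what in the paper is hidden inside the cited lemma; the paper's route is shorter but relies on an external reference. Both are valid, and your identification of the step $M^{\alpha^v}=\C\Rightarrow H_\psi^{U^v}=0$ as the type~$\mathrm{III}$ substitute for the tracial $L^2$-calculus is exactly right.
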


\begin{proof}
(1) It is easy to see that the complements of
$\mathcal U(x)$ and $\mathcal V(E,\varepsilon)$ are closed.

(2) Suppose $(v,\psi)\in\mathcal C_{\mathrm{erg}}(\alpha)$.  
Then $\beta:=\alpha^v$ is ergodic and $\psi$-preserving, so
$\psi\colon M\to M^\beta=\C$ coincides with the $\psi$-preserving
conditional expectation onto $M^\beta$.  
Hence for any $x\in M$, by \cite[Lemma 5.1]{AHHM18} there exists
$h\in\Gamma$ such that
\[
 \|x-\beta_h(x)\|_\psi>\|x-\psi(x)\|_\psi.
\]
Since $\beta_h=\Ad(v_h)\circ\alpha_h$, we have
$(v,\psi)\in\mathcal U(x)$.

Conversely, suppose $(v,\psi)\notin\mathcal C_{\mathrm{erg}}(\alpha)$. Let $\beta=\alpha^v$. Then $\C\ne M^\beta$, so there exists
$y\in M^\beta$ with $y-\psi(y)\ne0$.  
Choose $z\in X$ and $\varepsilon>0$ such that
\[
 \|y-z\|_\psi\le\varepsilon<\|y-\psi(y)\|_\psi/3.
\]
Then
\[
 \|z-\psi(z)\|_\psi\ge\|y-\psi(y)\|_\psi-\varepsilon>2\varepsilon,
\]
and for any $h\in\Gamma$, since $\psi\circ \beta_h=\psi$,
\[
 \|\beta_h(z)-z\|_\psi
 \le\|\beta_h(z-y)\|_\psi+\|z-y\|_\psi
 \le2\varepsilon<\|z-\psi(z)\|_\psi.
\]
Thus $(v,\psi)\notin\mathcal U(x)$.

The statement for $\mathcal C_{\mathrm{wm}}(\alpha)$ follows easily by definition.
\end{proof}

We translate the result in the previous section into the following form.

\begin{lem}\label{lem-cocycle3.5}
Let $\alpha\colon \Gamma \actson (M,\varphi)$ be a state preserving outer action of a countably infinite discrete group on a diffuse factor with separable predual.  Assume that $\Gamma$ is amenable and that $\mathrm{BC}(M\subset M\rtimes \Gamma,\varphi)=\C$. For any $x\in M\setminus\C$, any finite subset $E\subset M\setminus\C$, and any $\varepsilon>0$, the pair $(1,\varphi)\in\mathcal C_{\mathrm{state}}(\alpha)$ can be approximated by elements of $\mathcal U(x)\cap \mathcal V( E,\varepsilon)$, where $1\in\mathcal C(\alpha)$ is the trivial cocycle.
\end{lem}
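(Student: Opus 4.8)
The plan is to perturb $\alpha$ by the coboundary of the almost-invariant ultrapower unitary produced by Lemma~\ref{lem-cocycle3.6}, and then to transport the resulting data down to $M$ along a representing sequence. Fix $x\in M\setminus\C$, a finite set $E\subset M\setminus\C$, and $\varepsilon>0$. Given a finite set $F\subset\Gamma$ and $\delta>0$, Lemma~\ref{lem-cocycle3.6} supplies $h\in\Gamma$ and $U\in\mathcal U(M^\omega)$ with $\varphi^\omega(U)=0$, $\|U\varphi^\omega-\varphi^\omega U\|<\delta$, $\|\alpha_g^\omega(U)-U\|_{\varphi^\omega}<\delta$ for all $g\in F$, and with $M$, $U$, $\alpha_h^\omega(U)$ mutually $\ast$-free with respect to $\varphi^\omega$. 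I would then consider the $\alpha^\omega$-cocycle $v_g:=U\,\alpha_g^\omega(U)^*$ in $M^\omega$, the coboundary of $U$; thus $\alpha^v=\Ad(U)\circ\alpha^\omega\circ\Ad(U^*)$, which preserves the state $\psi:=\varphi^\omega\circ\Ad(U^*)$, and the almost-invariance and almost-centrality of $U$ give $\|\psi-\varphi^\omega\|<\delta$, while $\|v_g-1\|_{\varphi^\omega}\to0$ as $\delta\to0$ for $g\in F$.

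Next I would check that $(v,\psi)$ satisfies the defining inequalities of $\mathcal U(x)$ and of $\mathcal V(E,\varepsilon)$ with $h$ as a common witness. Since $\alpha_h\in\Aut(M)$, for $a\in M$ one has $\alpha_h^v(a)=U\,\alpha_h^\omega(U)^*\,\alpha_h(a)\,\alpha_h^\omega(U)\,U^*$. Writing $x^*$ and $\alpha_h(x)$, and similarly each $a,b\in E$, as a $\varphi$-centered element plus a scalar and expanding, the $\ast$-freeness of $M$, $U$, $\alpha_h^\omega(U)$ makes every resulting term vanish except the one in which both occurrences of the $M$-element are replaced by scalars: there the two $\alpha_h^\omega(U)$-letters, and then the two $U$-letters, become adjacent and cancel, leaving a scalar. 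This gives
\[
 \langle\alpha_h^v(x),x\rangle_{\varphi^\omega}=|\varphi(x)|^2,
 \qquad
 \varphi^\omega\bigl(\alpha_h^v(a)b\bigr)=\varphi(a)\varphi(b)\quad(a,b\in E).
\]
Because $\psi$ is $\alpha^v$-invariant, $\|\alpha_h^v(x)-x\|_\psi^2=2\|x\|_\psi^2-2\Re\langle\alpha_h^v(x),x\rangle_\psi$, whereas $\|x-\psi(x)\|_\psi^2=\|x\|_\psi^2-|\psi(x)|^2$; and since $x\notin\C$ and $\varphi$ is faithful, $\|x\|_\varphi^2>|\varphi(x)|^2$ with a strictly positive gap. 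Hence for $\delta$ small the first identity, valid up to an error $O(\delta\|x\|_\infty^2)$ coming from $\|\psi-\varphi^\omega\|<\delta$, forces $\|\alpha_h^v(x)-x\|_\psi>\|x-\psi(x)\|_\psi$, while the second yields $\sum_{a,b\in E}|\psi(\alpha_h^v(a)b)-\psi(a)\psi(b)|=O(|E|^2\delta)<\varepsilon$.

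Finally I would realize these data on $M$. Take finite sets $F_n\nearrow\Gamma$ and $\delta_n\downarrow0$, let $h_n\in\Gamma$ and $U_n\in\mathcal U(M^\omega)$ be as above, and represent $U_n$ by a sequence of unitaries $(u_{n,k})_k$ in $\mathcal U(M)$ which, exactly as in the diagonal argument at the end of the proof of Proposition~\ref{prop-free-independence1}, may be taken so that in addition $\|u_{n,k}\varphi-\varphi u_{n,k}\|\to0$ as $k\to\omega$. Setting $v_g^{(n,k)}:=u_{n,k}\,\alpha_g(u_{n,k})^*$ and $\psi^{(n,k)}:=\varphi\circ\Ad(u_{n,k}^*)$, a direct check gives $v^{(n,k)}\in\mathcal C(\alpha)$ and shows that $\alpha^{v^{(n,k)}}=\Ad(u_{n,k})\circ\alpha\circ\Ad(u_{n,k}^*)$ preserves $\psi^{(n,k)}$, so $(v^{(n,k)},\psi^{(n,k)})\in\mathcal C_{\mathrm{state}}(\alpha)$. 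All the estimates used above — the near-invariance of $v_g^{(n,k)}$ for $g\in F_n$, the bound on $\|\psi^{(n,k)}-\varphi\|$, and the finitely many $\varphi$-values of words entering the two identities for $x$ and for $a,b\in E$ — are closed conditions on the representing sequence, so one may choose $k_n$ along $\omega$ making all of them hold up to $1/n$. Then $\psi^{(n,k_n)}\to\varphi$ in norm, $v^{(n,k_n)}\to 1$ in the pointwise $\sigma$-$\ast$strong topology (for each fixed $g$, eventually $g\in F_n$ and $\|v_g^{(n,k_n)}-1\|_\varphi$, $\|(v_g^{(n,k_n)})^*-1\|_\varphi\to 0$), and for $n$ large $(v^{(n,k_n)},\psi^{(n,k_n)})\in\mathcal U(x)\cap\mathcal V(E,\varepsilon)$ with witness $h_n$. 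This is the desired approximation of $(1,\varphi)$.

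I expect the main obstacle to be the non-traciality of $\varphi^\omega$: it forces one to arrange that the representing unitaries almost commute with $\varphi$ on $M$, not merely that $U_n$ almost commutes with $\varphi^\omega$, and to control both $\|\cdot\|_{\varphi^\omega}$ and $\|(\cdot)^*\|_{\varphi^\omega}$ when passing from the $L^2$-distance inequality defining $\mathcal U(x)$ to the statement about $\varphi^\omega$-values of the words above. The conceptual content is already packaged in Lemma~\ref{lem-cocycle3.6}, and both bookkeeping points are handled by the same device used in Proposition~\ref{prop-free-independence1}.
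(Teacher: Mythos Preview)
Your proposal is correct and follows essentially the same approach as the paper: both invoke Lemma~\ref{lem-cocycle3.6}, use the mutual $\ast$-freeness of $M$, $U$, $\alpha_h^\omega(U)$ to carry out the same moment computation, and then pass to a representing sequence in $M$. The only cosmetic difference is that the paper conjugates the test elements (working with $uxu^*$, $uau^*$ relative to $\alpha$ and $\varphi$) while you fix the test elements and pass to the perturbed pair $(\alpha^v,\psi)$; these two pictures are unitarily equivalent and yield the same estimates.
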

\begin{proof}
	Fix $\varepsilon>0$, $x\in M\setminus\C$, finite subsets $F\subset\Gamma$ and $E\subset M$. We will show that for every sufficiently small $\delta>0$, there exists $u\in\mathcal U(M)$ satisfying the following conditions:
\begin{itemize}
 \item[(a)] $\|u-\alpha_g(u)\|_\varphi<\delta$ for all $g\in F$, and $\|u\varphi-\varphi u\|<\delta$;
 \item[(b)] there exists $h\in\Gamma$ such that 
 \[
   \|\alpha_h(uxu^*)-uxu^*\|_\varphi>\|uxu^*-\varphi(uxu^*)\|_\varphi;
 \]
 \item[(c)] there exists $h\in\Gamma$ such that 
 \[
   \sum_{a,b\in E}\bigl|
     \varphi(\alpha_h(uau^*)ubu^*)-\varphi(uau^*)\varphi(ubu^*)
   \bigr|<\varepsilon.
 \]
\end{itemize}
If such a unitary $u$ exists, define an $\alpha$-cocycle $v_g:=u^*\alpha_g(u)$ for $g\in\Gamma$ and set $\psi:=u^*\varphi u$. Then $ \alpha^v$ preserves $\psi$, and by (a) the distance between $(1,\varphi)$ and $(v,\psi)$ is small. Moreover, (b) and (c) ensures that $(v,\psi)\in\mathcal U(x) \cap \mathcal V(E,\varepsilon)$.  Thus to prove the lemma, it suffices to construct such a unitary $u$.

For each $a\in M$, put $a^\circ:=a-\varphi(a)$.  
Since condition (b) is invariant under scalar rescaling of $x$, we may assume $\|x\|_\infty\leq 1/2$. Choose $\delta>0$ sufficiently small so that
\[
 8\delta\le\|x^\circ\|_\varphi^2,\qquad
 2|E|^2\max_{a\in E}\|2a\|_\infty^2 \,\delta<\varepsilon.
\]
By Lemma \ref{lem-cocycle3.6}, there exist $h\in\Gamma$ and $U\in\mathcal U(M^\omega)$ such that
\begin{itemize}
 \item $\varphi^\omega(U)=0$, \quad
   $\|\alpha_g^\omega(U)-U\|_{\varphi^\omega}<\delta$ for all $g\in F$, \quad
   $\|U\varphi^\omega-\varphi^\omega U\|<\delta$;
 \item $M$, $U$, and $\alpha_h^\omega(U)$ are $\ast$-free with respect to $\varphi^\omega$.
\end{itemize}
By $\|U\varphi^\omega-\varphi^\omega U\|<\delta$ and the free independence, for all $a\in M$ with $\|a\|_\infty\leq 1/2$, we have
\begin{align*}
 &\|Ua^\circ U^*-(UaU^*)^\circ\|_{\varphi^\omega}
   =|\!-\!\varphi(a)+\varphi^\omega(UaU^*)|
   <\delta ,\\
 &\|\alpha_h^\omega(UaU^*)-UaU^*\|_{\varphi^\omega}^2
	=\|\alpha_h^\omega(Ua^\circ U^*)-Ua^\circ U^*\|_{\varphi^\omega}^2
   =2\|Ua^\circ U^*\|_{\varphi^\omega}^2.
\end{align*}
Moreover, since $\|U\varphi^\omega-\varphi^\omega U\|<\delta$, we have
\begin{align*}
 \|a^\circ\|_\varphi^2
 &\le \|Ua^\circ U^*\|_{\varphi^\omega}^2
    +\|U\varphi^\omega U^*-\varphi^\omega\|\\
 &<(\|(UaU^*)^\circ\|_{\varphi^\omega}+\delta)^2+\delta
 <\|(UaU^*)^\circ\|_{\varphi}^2+4\delta.
\end{align*}
Since $8\delta\le\|x^\circ\|_\varphi^2$, taking $a=x$ implies
$4\delta<\|(UxU^*)^\circ\|_\varphi^2$.  
Therefore,
\[
 \|\alpha_h^\omega(UxU^*)-UxU^*\|_{\varphi^\omega}^2
 >2\|(UxU^*)^\circ\|_{\varphi^\omega}^2-4\delta
 >\|(UxU^*)^\circ\|_{\varphi^\omega}^2,
\]
which corresponds to condition (b).

For all $a,b\in M$ with $\|a\|_\infty,\|b\|_\infty\leq 1/2$, we have
\[
 |\varphi^\omega(\alpha_h^\omega[(UaU^*)^\circ-Ua^\circ U^*]b)|
 \le\|(UaU^*)^\circ-Ua^\circ U^*\|_{\varphi^\omega}\|b\|_{\varphi^\omega}
 <\delta,
\]
and similarly
$|\varphi^\omega(a[(UbU^*)^\circ-Ub^\circ U^*])|< \delta$.  
Hence,
\[
 |\varphi^\omega(\alpha_h^\omega((UaU^*)^\circ)(UbU^*)^\circ)|
 <|\varphi^\omega(\alpha_h^\omega(Ua^\circ U^*)Ub^\circ U^*)|+2\delta.
\]
By the free independence, the first term on the right-hand side vanishes.  
Using the equality $\varphi(\alpha_h(a^\circ)b^\circ) =\varphi(\alpha_h(a)b)-\varphi(a)\varphi(b)$,
we obtain
\[
 \sum_{a,b\in E}
   |\varphi^\omega(\alpha_h^\omega(UaU^*)UbU^*)
     -\varphi^\omega(UaU^*)\varphi^\omega(UbU^*)|
 <2\delta\, |E|^2\max_{a\in E}\|2 a\|_\infty^2<\varepsilon.
\]
This corresponds to condition (c).

Finally, representing $U\in M^\omega$ by a sequence in $M$, we can find a unitary element in $M$ satisfying the desired conditions.
\end{proof}

\section{Proof of main theorems}

Recall that for a group action $\alpha\colon \Gamma \actson M$, we are using the notation
\[
\Gamma_{\rm mod}= \{g\in \Gamma\mid \alpha_g = \Ad(u) \circ \sigma_t^{\varphi} \text{ for some } u \in \mathcal U(M),\ t \in \mathbb R \,\}.
\]
We distinguish two cases according to whether $\Gamma_{\mathrm{mod}}$ is trivial or not.

\subsection*{Case 1:  $\Gamma_{\rm mod}= \{e\}$.}

We prove the following theorem. This is a natural generalization of the result in \cite{MV23}.

\begin{thm}\label{thm-case1}
Let $\alpha \colon \Gamma \actson M$ be a state preserving outer action satisfying the following assumptions:
\begin{itemize}
	\item $\Gamma$ is amenable and countably infinite;
	\item $\Gamma_{\mathrm{mod}} = \{e\}$;
	\item $M$ is a type ${\rm III_1}$ factor with separable predual and has trivial bicentralizer.
\end{itemize}
Then both $\mathcal C_{\mathrm{erg}}(\alpha)$ and $\mathcal C_{\mathrm{wm}}(\alpha)$ are dense $G_\delta$ subsets of $\mathcal C_{\mathrm{state}}(\alpha)$.
\end{thm}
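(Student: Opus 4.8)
The plan is to feed the structural description of Lemma~\ref{lem-cocycle2.5} and the local approximation of Lemma~\ref{lem-cocycle3.5} into the Baire category theorem. First I would note that a type $\mathrm{III}_1$ factor is a diffuse factor, and that by Lemma~\ref{lem-relative-bicentralizer} the hypotheses give $\mathrm{BC}(M\subset M\rtimes_\alpha\Gamma,\varphi)=\mathbb C$; hence all hypotheses of Lemmas~\ref{lem-cocycle2.5} and~\ref{lem-cocycle3.5} hold. Using separability of $M_\ast$, fix a countable $\sigma$-$\ast$strongly dense subset $X\subset M\setminus\mathbb C$ and a sequence $\varepsilon_n\to 0$. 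By Lemma~\ref{lem-cocycle2.5}(2),
\[
 \mathcal C_{\mathrm{erg}}(\alpha)=\bigcap_{x\in X}\mathcal U(x),\qquad
 \mathcal C_{\mathrm{wm}}(\alpha)=\bigcap_{E,\,n}\mathcal V(E,\varepsilon_n),
\]
the second intersection being over finite $E\subset X$ and $n\in\mathbb N$, and each $\mathcal U(x)$, $\mathcal V(E,\varepsilon_n)$ is open by Lemma~\ref{lem-cocycle2.5}(1). Both right-hand sides are countable intersections of open sets, so both $\mathcal C_{\mathrm{erg}}(\alpha)$ and $\mathcal C_{\mathrm{wm}}(\alpha)$ are $G_\delta$ in the Polish space $\mathcal C_{\mathrm{state}}(\alpha)$. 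It then suffices to prove that every $\mathcal U(x)$ and every $\mathcal V(E,\varepsilon)$ is \emph{dense} in $\mathcal C_{\mathrm{state}}(\alpha)$, after which the Baire category theorem finishes the argument.

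Lemma~\ref{lem-cocycle3.5} only places the trivial cocycle $(1,\varphi)$ in the closure of $\mathcal U(x)\cap\mathcal V(E,\varepsilon)$, so the heart of the proof is a homogeneity step making this hold at an arbitrary $(v,\psi)\in\mathcal C_{\mathrm{state}}(\alpha)$. Given such a pair I would set $\beta:=\alpha^v$ and observe that the hypotheses of the theorem are stable under cocycle perturbation: $\beta$ preserves $\psi$, it is again outer, and since $\mathrm{Mod}(M)$ is a normal subgroup containing $\mathrm{Int}(M)$ one has $\Gamma_{\mathrm{mod}}(\beta)=\Gamma_{\mathrm{mod}}(\alpha)=\{e\}$; moreover $M$ is unchanged. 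Hence Lemma~\ref{lem-relative-bicentralizer} applies to $\beta$ and yields $\mathrm{BC}(M\subset M\rtimes_\beta\Gamma,\psi)=\mathbb C$, so Lemma~\ref{lem-cocycle3.5} applied to $\beta$ puts $(1,\psi)$ in the closure of the corresponding sets $\mathcal U^\beta(x)\cap\mathcal V^\beta(E,\varepsilon)$ (that is, the sets of Lemma~\ref{lem-cocycle2.5} formed with $\beta$ in place of $\alpha$).

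Next I would transport this back to $\alpha$ along the canonical identification of cocycle spaces: if $w$ is a $\beta$-cocycle then $g\mapsto w_g v_g$ is an $\alpha$-cocycle with $\alpha^{wv}=\beta^{w}$, and $w\mapsto wv$ is a homeomorphism of $\mathcal C(\beta)$ onto $\mathcal C(\alpha)$ for the pointwise $\sigma$-$\ast$strong topology, since multiplication is jointly continuous on the unit ball. Thus $(w,\psi')\mapsto(wv,\psi')$ is a homeomorphism of $\mathcal C_{\mathrm{state}}(\beta)$ onto $\mathcal C_{\mathrm{state}}(\alpha)$ sending $(1,\psi)$ to $(v,\psi)$; because the defining conditions of $\mathcal U$ and $\mathcal V$ refer only to the perturbed action and the state, this homeomorphism carries $\mathcal U^\beta(x)$ onto $\mathcal U(x)$ and $\mathcal V^\beta(E,\varepsilon)$ onto $\mathcal V(E,\varepsilon)$. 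Therefore $(v,\psi)$ lies in the closure of $\mathcal U(x)\cap\mathcal V(E,\varepsilon)$. As $(v,\psi)$ was arbitrary, every $\mathcal U(x)$ and every $\mathcal V(E,\varepsilon)$ is dense in $\mathcal C_{\mathrm{state}}(\alpha)$, and Baire's theorem gives that $\mathcal C_{\mathrm{erg}}(\alpha)$ and $\mathcal C_{\mathrm{wm}}(\alpha)$ are dense $G_\delta$ subsets.

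I expect the homogeneity step to be the only genuine obstacle: one must verify that the entire hypothesis list (outerness, $\Gamma_{\mathrm{mod}}=\{e\}$, and, via Lemma~\ref{lem-relative-bicentralizer}, triviality of the relative bicentralizer) survives cocycle perturbation, and then move the approximation of Lemma~\ref{lem-cocycle3.5} along the cocycle-composition homeomorphism. All the deeper analytic content — the free-independence and Connes–St\o rmer transitivity input — has already been absorbed into Lemma~\ref{lem-cocycle3.5}, so no new hard estimate is needed at this stage.
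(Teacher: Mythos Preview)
Your proposal is correct and follows essentially the same route as the paper: express $\mathcal C_{\mathrm{erg}}(\alpha)$ and $\mathcal C_{\mathrm{wm}}(\alpha)$ as countable intersections of the open sets from Lemma~\ref{lem-cocycle2.5}, then use the cocycle-composition homeomorphism $\mathcal C_{\mathrm{state}}(\beta)\simeq\mathcal C_{\mathrm{state}}(\alpha)$ to reduce the density check at an arbitrary $(v,\psi)$ to the case of the trivial cocycle, where Lemma~\ref{lem-relative-bicentralizer} and Lemma~\ref{lem-cocycle3.5} apply. The paper is slightly terser in that it does not spell out the stability of outerness and of $\Gamma_{\mathrm{mod}}=\{e\}$ under cocycle perturbation, but your explicit verification of these points is harmless and the argument is otherwise identical.
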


\begin{proof}
Take a countable subset $X = \{x_n\}_n \subset M \setminus \mathbb C$ that is dense in the $\sigma$-$\ast$strong topology.
By Lemma~\ref{lem-cocycle2.5}, we can write
\[
\mathcal C_{\mathrm{erg}}(\alpha) = \bigcap_{n\in \mathbb N} \mathcal U(x_n), \qquad
\mathcal C_{\mathrm{wm}}(\alpha) = \bigcap_{\substack{E\subset X\ \mathrm{finite}\\ n\in \mathbb N}} \mathcal V(E,1/n),
\]
as countable intersections of open sets.
If each $\mathcal U(x_n)$ and $\mathcal V(E,1/n)$ is dense in $\mathcal C_{\mathrm{state}}(\alpha)$, the conclusion follows from the Baire category theorem.
Hence, it suffices to show that $\mathcal U(x) \cap \mathcal V(E,\varepsilon) \subset \mathcal C_{\mathrm{state}}(\alpha)$ is dense for each $x\in M\setminus \mathbb C$, finite subset $E\subset M\setminus \mathbb C$, and $\varepsilon>0$.

Take an arbitrary $(w,\phi)\in \mathcal C_{\mathrm{state}}(\alpha)$. It is enough to approximate it by elements of $\mathcal U(x) \cap \mathcal V(E,\varepsilon)$.
Let $\gamma = \Ad(w)\circ \alpha$, and observe that if $v$ is an $\alpha$-cocycle, then $(vw^*)_g:=v_gw^*_g$ for $g\in \Gamma$ defines a $\gamma$-cocycle. Then it is easy to see that the map
\[
\mathcal C_{\mathrm{state}}(\alpha)\ni (v,\psi)\mapsto (vw^*,\psi)\in \mathcal C_{\mathrm{state}}(\gamma)
\]
is a homeomorphism. This map sends $(w,\phi)$ to $(1,\phi)$ and carries $\mathcal U(x)$, $ \mathcal V(E,\varepsilon)$ naturally to their counterparts for $\gamma$.
Therefore, if we can approximate $(1,\phi)$ with respect to $\gamma$, then we can approximate $(w,\phi)$ with respect to $\alpha$.
Thus, we may assume $w=1$. Then $\alpha\colon \Gamma \actson M$ is an action preserving $\phi\in S_{\mathrm{f}}(M)$.
By Lemma~\ref{lem-relative-bicentralizer}, we have $\mathrm{BC}(M\subset M\rtimes\Gamma,\phi)=\mathbb C$.
Hence Lemma~\ref{lem-cocycle3.5} applies, and the conclusion follows.
\end{proof}

\subsection*{Case 2: $\Gamma_{\rm mod}\neq \{e\}$.}

This case does not appear in \cite{MV23}, so additional arguments are required. For technical reasons, we assume $\Gamma = \mathbb Z$. Then, since $\{e\}\neq \Gamma_{\mathrm{mod}}\leq \mathbb Z$, there exists $p\geq 1$ such that $\Gamma_{\mathrm{mod}} = p\mathbb Z$.

\begin{thm}\label{thm-case2}
Let $M$ be a type ${\rm III_1}$ factor with separable predual, $\varphi\in M_\ast$ a faithful state, and $\alpha\colon \mathbb Z \actson (M,\varphi)$ a state preserving outer action such that $\Gamma_{\mathrm{mod}} = p\mathbb Z$ for some $p\geq 1$.
Then there exists a unitary $u\in \mathcal U(M)$ such that $\alpha^u$ is an ergodic state preserving action.
\end{thm}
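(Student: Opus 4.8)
The plan is to reduce the assertion to a single ``lifting'' problem for $\theta:=\alpha_1$, and then to resolve that problem by exploiting the freedom available in the choice of perturbation. Since $\Gamma_{\mathrm{mod}}=p\mathbb Z$, write $\theta^p=\Ad(w)\circ\sigma^\varphi_\tau$ with $w\in\mathcal U(M)$ and $\tau\in\mathbb R$; outerness of $\alpha$ forces $\theta^p\notin\Int(M)$, so $\tau\neq 0$. By Theorem~\ref{thm-MV23}, fix a faithful state $\varphi_0\in M_\ast$ with $\sigma^{\varphi_0}_T$ weakly mixing with respect to $\varphi_0$ for every $T>0$; applying this with $T=|\tau|$ shows that $\sigma^{\varphi_0}_\tau$ is weakly mixing. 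Put $a:=[D\varphi_0:D\varphi]_\tau\,w^\ast\in\mathcal U(M)$; by the Connes cocycle identity $\sigma^{\varphi_0}_\tau=\Ad([D\varphi_0:D\varphi]_\tau)\circ\sigma^\varphi_\tau$ we then have $\Ad(a)\circ\theta^p=\sigma^{\varphi_0}_\tau$.

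The reduction is the following: it suffices to find $u\in\mathcal U(M)$ with
\[
u\,\theta(u)\,\theta^2(u)\cdots\theta^{p-1}(u)=a
\]
(a scalar ambiguity on the right is harmless, being killed by $\Ad$). Indeed, given such a $u$, set $\beta:=\Ad(u)\circ\theta$, the time-one map of $\alpha^u$. Using the general identity $(\Ad(u)\circ\theta)^p=\Ad\!\bigl(u\,\theta(u)\cdots\theta^{p-1}(u)\bigr)\circ\theta^p$, we obtain $\beta^p=\Ad(a)\circ\theta^p=\sigma^{\varphi_0}_\tau$, which is weakly mixing and hence ergodic; therefore $M^\beta\subseteq M^{\beta^p}=\mathbb C$, so $\beta$ is ergodic. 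Finally $\varphi_0\circ\beta^p=\varphi_0\circ\sigma^{\varphi_0}_\tau=\varphi_0$, so the averaged state $\psi:=\tfrac1p\sum_{k=0}^{p-1}\varphi_0\circ\beta^k$ is faithful, normal, and $\beta$-invariant; hence $\alpha^u$ is an ergodic state preserving $\mathbb Z$-action, as required. Note that the averaging step only uses that $\beta^p$ preserves \emph{some} faithful normal state, so it is in fact enough to arrange $\beta^p$ to be any weakly mixing automorphism of the form $\Ad(b)\circ\sigma^\varphi_\tau$.

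What remains is the lifting step, which has no counterpart in \cite{MV23} (there $\Gamma_{\mathrm{mod}}$ is always trivial) and is the main obstacle. It is convenient to phrase it inside $N:=M\rtimes_\theta\mathbb Z$ with implementing unitary $\lambda$: since $(u\lambda)^p=u\,\theta(u)\cdots\theta^{p-1}(u)\,\lambda^p$, the displayed equation is equivalent, with $\mu:=u\lambda$, to $\mu^p=a\lambda^p$ where $\mu$ lies in the degree-one spectral subspace $M\lambda$ of the dual $\mathbb T$-action on $N$; that is, one seeks a ``degree one'' $p$-th root of the unitary $a\lambda^p$. An arbitrary $a$ need not admit such a root, and here is where a new argument is needed. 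The way around is to trade exactness for flexibility on both sides: on the one hand, by Theorem~\ref{thm-MV23} and the Connes cocycle calculus, the set of unitaries $b$ for which $\Ad(b)\circ\sigma^\varphi_\tau$ is weakly mixing (with respect to some faithful normal state) is large and stable under $\sigma^\varphi_\tau$-cocycle conjugation; on the other hand, outerness of $\theta$ should make the set $\{\,u\,\theta(u)\cdots\theta^{p-1}(u):u\in\mathcal U(M)\,\}$ $\sigma$-$\ast$strongly dense, by an approximation argument in $M^\omega$ exploiting the outerness of $\theta$. One then matches the two families, passes from $M^\omega$ back down to $M$, and corrects the invariant state by the averaging device above. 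Carrying this matching out carefully is where the remaining work lies.
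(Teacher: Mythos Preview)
Your reduction is sound: if you can produce $u$ with $u\,\theta(u)\cdots\theta^{p-1}(u)=a$ (or more generally with the product equal to some $b$ making $\Ad(b)\circ\sigma^\varphi_\tau$ weakly mixing), then the ergodicity and the averaged invariant state follow exactly as you say. The difficulty is entirely in the lifting step, and your sketch there does not close the gap.

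There are two concrete problems. First, the claim that $\{\,u\,\theta(u)\cdots\theta^{p-1}(u):u\in\mathcal U(M)\,\}$ is $\sigma$-$\ast$strongly dense is asserted without argument; ``outerness of $\theta$'' and ``an approximation in $M^\omega$'' are not by themselves a proof, and note that Theorem~\ref{thm-case2} carries no bicentralizer hypothesis, so the ultraproduct machinery of Section~3 is not available here. Second, even granting density, the matching step fails as stated: two dense subsets of $\mathcal U(M)$ need not meet, and passing from an approximate solution in $M^\omega$ down to $M$ only yields $c_p(u)\approx b$, which does not imply that $\Ad(c_p(u))\circ\theta^p$ is ergodic. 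You would need a Baire-category statement (both sets dense $G_\delta$ in a common Polish space), and neither the image of $u\mapsto c_p(u)$ nor the set of ``good'' $b$ is shown to be $G_\delta$.

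The paper sidesteps the $p$-th root problem entirely. Writing $\theta^p=\Ad(u)\circ\sigma^\varphi_t$ with $u\in M_\varphi$, the key observation is an obstruction computation: setting $\beta=\sigma^\varphi_{-t/p}\circ\theta$, one has $\beta^p=\Ad(u)$ and $\beta(u)=\gamma u$ with $\gamma^p=1$, hence $\theta(u^p)=u^p$. Thus $u^p$ lies in the abelian algebra it generates inside $M_\varphi\cap M^\theta$ and admits a $p^2$-th root $v$ there; perturbing by $v^*$ arranges $\theta^{p^2}=\sigma^\varphi_{pt}$ exactly, with no density argument needed. Then $\beta$ becomes a genuine $\mathbb Z/p^2\mathbb Z$-action, $M^\beta$ is a finite direct sum of type~$\mathrm{III}_1$ factors (Lemma~\ref{lem-fixed-point-typeIII}), one picks an ergodic state on each summand via Theorem~\ref{thm-MV23}, extends through the conditional expectation, and shows the resulting perturbed action has \emph{finite-dimensional} (not necessarily trivial) fixed points. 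A final minimal-projection trick (Lemma~\ref{lem-ergodic-minimal}) then yields ergodicity. The moral is that aiming for $\beta^p$ exactly modular is too rigid; relaxing to $\beta^{p^2}$ modular and accepting a finite-dimensional fixed point algebra is what makes the argument go through.
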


\begin{proof}
	Since $p\in \Gamma_{\mathrm{mod}}$, there exist $u\in \mathcal U(M)$ and $t\in \mathbb R$ such that $\alpha_{p}=\Ad(u)\circ \sigma^\varphi_t$. Then $\Ad(u)$ preserves $\varphi$, so we have $u\in M_\varphi$. If $p=1$, then Theorem~\ref{thm-MV23} easily yields the conclusion. So we assume $p\geq 2$.  Set $\theta := \alpha_1$.

\begin{claim}
There exists $v\in \mathcal U(M_\varphi\cap M^\alpha)$ such that $(\Ad(v)\circ \theta)^{p^2}=\sigma^\varphi_{pt}$.
\end{claim}

\begin{proof}
Define $\beta:=\sigma^{\varphi}_{-t/p}\circ \theta$. Since $\theta$ preserves $\varphi$, it commutes with $\sigma^\varphi$, so that
\[
\beta^p = \sigma^\varphi_{-t} \circ \theta^p = \Ad(u).
\]
Since $\Gamma_{\mathrm{mod}} = p\mathbb Z$, $\beta^q$ is not inner for all $1\le q<p$. Hence, there exists an obstruction $\gamma\in \mathbb C$ with $|\gamma|=1$ such that
\[
\beta(u)=\sigma^{\varphi}_{-t/p}\circ \theta(u) = \gamma u,\quad \gamma^p=1.
\]
This implies $\theta(u)=\gamma\sigma^{\varphi}_{t/p}(u)=\gamma u$, and thus $\theta(u^p)=\gamma^p u^p=u^p$.
Hence $u^p$ is invariant under both $\theta$ and $\sigma^\varphi$.
Choose a unitary $v\in M_\varphi \cap M^\theta$ satisfying $v^{p^2}=u^p$.
Then we have
\[
(\Ad(v^*)\circ \theta)^{p^2}
= \Ad(v^{*p^2})\circ \theta^{p^2}
= \Ad(u^{*p})\circ (\Ad(u)\circ \sigma^\varphi_t)^p
= \sigma^\varphi_{pt}.
\]
Thus $v^*$ satisfies the required condition.
\end{proof}

By the claim, we may assume $\alpha_{p^2} = \sigma^\varphi_{pt}$.
Define $\beta :=\sigma^\varphi_{-t/p}\circ \theta$.
Then $\beta^p=\Ad(u)$ and $\beta^{p^2}=\mathrm{id}$, so $\beta$ defines an action of the finite group $\Lambda:=\mathbb Z/p^2\mathbb Z$. We again denote it by $\beta\colon \Lambda \actson M$. By Lemma~\ref{lem-fixed-point-typeIII} below, $M^\beta$ is a finite direct sum of type ${\rm III_1}$ factors, which we denote by
\[
M^\beta = N_1 \oplus \cdots \oplus N_n.
\]
By Theorem~\ref{thm-MV23}, there exist ergodic states $\psi_i\in (N_i)_\ast$ for all $i$. Define $\psi := \frac{1}{n}(\psi_1\oplus \cdots \oplus \psi_n) \in (M^\beta)_\ast$.
For any $T>0$, since $\sigma^\psi_T = \sigma^{\psi_1}_T \oplus \cdots \oplus \sigma^{\psi_n}_T$, and since each $\sigma^{\psi_i}_T$ is ergodic, we have
\[
(M^\beta)^{\sigma^\psi_T} = \C1_{N_1}\oplus \cdots \oplus \C 1_{N_n}= \mathcal Z(M^\beta).
\]
We next extend $\psi$ to $M$. 

Since $\Lambda=\mathbb Z/p^2\mathbb Z$ is a finite abelian group, we have the spectral decomposition
\[
M = \bigoplus_{\gamma \in \widehat{\Lambda}} M_\gamma,\quad \text{where }M_\gamma = \{x\in M \mid \beta_h(x)=\langle h, \gamma\rangle x \text{ for all }h\in \Lambda\}.
\]
Then there is a projection from $M$ onto $M_\gamma$ given by
\[
P_\gamma(x):=  \int_{\Lambda}\overline{\langle h,\gamma\rangle}\,\beta_h(x)\, dh.
\]
In particular, $M_e = M^\beta$ and
\[
E(x):=P_e(x) = \int_{\Lambda}\beta_h(x)\, dh
\]
is the canonical conditional expectation.
Clearly, $E\circ \beta_h =E= \beta_h\circ E$ for all $h\in \Lambda$.
Thus $\psi\circ E \in M_\ast$, which we again denote by $\psi$, is invariant under the $\Lambda$-action.

\begin{claim}
For any $T>0$, $M^{\sigma^\psi_T}$ is finite-dimensional.
\end{claim}

\begin{proof}
Since $\sigma^\psi$ commutes with $\beta$, it also commutes with each $P_\gamma$, hence acts on each $M_\gamma$.
Thus, we have
\[
M^{\sigma^\psi_T} = \bigoplus_{\gamma \in \widehat{\Lambda}} M_\gamma^{\sigma^\psi_T}.
\]
It suffices to show that each $M_\gamma^{\sigma^\psi_T}$ is finite-dimensional.

Write $\mathcal Z(M^\beta) = \sum_{i=1}^n \mathbb C e_i$, where each $e_i$ is a minimal projection. Fix $\gamma,i,j$ and take $x,y\in e_i M_\gamma^{\sigma_T^\psi} e_j$.
Then $y^*x$ is invariant under both $\beta$ and $\sigma^\psi_T$, so
\[
y^*x \in e_j (M^\beta )^{\sigma_T^\psi} e_j = \mathbb C e_j.
\]
In particular, by seeing the case $x=y$, we get $x^*x \in \mathbb C e_j$, and similarly $xx^*\in \mathbb C e_i$.
Hence, if $x\neq 0$, it is a scalar multiple of a partial isometry connecting $e_i$ and $e_j$. Then $y^* x\in \C e_j$ is equivalent to $y^* \in \mathbb C x^*$. We conclude that $e_i M_\gamma^{\sigma_T^\psi} e_j$ is at most one-dimensional. Therefore, by the direct sum decomposition
\[
M_\gamma^{\sigma_T^\psi} = \sum_{i,j=1}^n  e_i M_\gamma^{\sigma_T^\psi} e_j,
\]
we conclude that $M_\gamma^{\sigma_T^\psi}$ is finite-dimensional.
\end{proof}

Since $\beta = \theta \circ \sigma^\varphi_{-t/p}$ preserves $\varphi$, we have $\varphi \circ E = \varphi$, hence
\[
[D\psi:D\varphi]_t = [D\psi\circ E :D\varphi\circ E]_t \in M^\beta.
\]
We define
\[
u\colon \mathbb Z \to \mathcal U(M^\beta),\quad u_n := [D\psi:D\varphi]_{nt/p}.
\]
This is a cocycle for $\alpha$.
Indeed, by using the definition of the Connes cocycle,
\[
u_{n}\alpha_n(u_m)
= u_n \sigma^\varphi_{nt/p}\circ \beta^n(u_m)
= u_n \sigma^\varphi_{nt/p}(u_m)
= u_{n+m}.
\]
Thus $\alpha^u_n:=\Ad(u_n)\circ \alpha_n$ for $n\in \Z$ defines an action of $\mathbb Z$. It satisfies
\[
\alpha^u_n=\Ad(u_{n})\circ \sigma^\varphi_{nt/p}\circ \beta^{n} = \sigma^\psi_{nt/p}\circ \beta^{n}\quad \text{for all }n\in \Z.
\]
Hence $\alpha^u$ preserves $\psi$, and since $\beta$ vanishes on $p^2\mathbb Z$, we have $\alpha^u_{p^2n} =  \sigma^\psi_{npt}$ for all $n$.
This implies
\[
M^{\alpha^u} \subset M^{\sigma^\psi_{pt}}.
\]
By the previous claim, the right-hand side is finite-dimensional. Thus, $M^{\alpha^u}$ is finite-dimensional and contains a minimal projection. By Lemma~\ref{lem-ergodic-minimal} below, after modifying by another cocycle, we obtain an ergodic state preserving action. This completes the proof.
\end{proof}

The following lemmas were used in the above proof.

\begin{lem}\label{lem-ergodic-minimal}
Let $\alpha\colon \Gamma \actson M$ be an action of a discrete group on a $\sigma$-finite type ${\rm III}$ factor. If $M^\alpha$ contains a minimal projection, then there exists an $\alpha$-cocycle $v$ such that $\alpha^v$ is ergodic. If $\alpha$ is a state preserving action, then so is $\alpha^v$. 
\end{lem}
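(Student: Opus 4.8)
The plan is to use a fixed minimal projection $p\in M^\alpha$ to pass to the corner $pMp$, where the restricted action is automatically ergodic because $p$ is $\alpha$-invariant, and then to transport this ergodic action back to all of $M$ by means of a partial isometry implementing $p\sim 1$, checking that this transport is realized by an $\alpha$-cocycle.

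In detail, I would first fix a minimal projection $p\in M^\alpha$. Since $\alpha_g(p)=p$ for all $g$, the action $\alpha$ restricts to $pMp$, and its fixed-point algebra is $M^\alpha\cap pMp=pM^\alpha p=\C p$ by minimality of $p$; thus $\alpha|_{pMp}$ is ergodic. As $M$ is a type $\mathrm{III}$ factor, every nonzero projection is equivalent to $1$, so I may choose $w\in M$ with $w^*w=1$ and $ww^*=p$. I then set
\[
 v_g:=w^*\alpha_g(w)\qquad (g\in\Gamma).
\]
Using $\alpha_g(ww^*)=\alpha_g(p)=p$ and $\alpha_g(w^*w)=1$ one checks that each $v_g$ is unitary, and the computation
\[
 v_g\,\alpha_g(v_h)=w^*\alpha_g(w)\,\alpha_g(w^*)\,\alpha_{gh}(w)=w^*\alpha_g(ww^*)\alpha_{gh}(w)=w^*p\,\alpha_{gh}(w)=w^*\alpha_{gh}(w)=v_{gh}
\]
shows that $g\mapsto v_g$ is an $\alpha$-cocycle. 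For the perturbed action one then gets $\alpha^v_g(x)=v_g\alpha_g(x)v_g^*=w^*\alpha_g(wxw^*)w$, so (multiplying by $w$ on the left and $w^*$ on the right, and using $\alpha_g(wxw^*)\in pMp$) the relation $\alpha^v_g(x)=x$ for all $g$ is equivalent to $wxw^*\in M^\alpha\cap pMp=\C p$, i.e.\ $x=w^*(wxw^*)w\in\C 1$. Hence $M^{\alpha^v}=\C$ and $\alpha^v$ is ergodic.

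Finally, if $\alpha$ preserves a faithful normal state $\varphi$, I would set $\psi:=\varphi(p)^{-1}\,\varphi(w\,\cdot\,w^*)$. This is a normal state; it is faithful because $w^*w=1$ (if $\psi(x^*x)=0$ then $xw^*=0$, hence $x=(xw^*)w=0$); and it is $\alpha^v$-invariant since $\psi(\alpha^v_g(x))=\varphi(p)^{-1}\varphi\bigl(p\,\alpha_g(wxw^*)\,p\bigr)=\varphi(p)^{-1}\varphi(wxw^*)=\psi(x)$, using $\alpha_g(wxw^*)\in pMp$ and $\varphi\circ\alpha_g=\varphi$. There is no deep obstacle here: the only point genuinely requiring attention is verifying that $(v_g)$ satisfies the cocycle identity rather than merely being a family of unitaries, and this is exactly where $\alpha$-invariance of $p$ is used; the remaining verifications (unitarity of $v_g$, faithfulness and invariance of $\psi$) are elementary bookkeeping.
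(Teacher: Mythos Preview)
Your proof is correct and follows essentially the same route as the paper: pick a minimal projection $p\in M^\alpha$, use type~$\mathrm{III}$ to find an isometry $w$ with $ww^*=p$, set $v_g=w^*\alpha_g(w)$, and observe that $\alpha^v$ is conjugate via $x\mapsto wxw^*$ to the ergodic corner action on $pMp$. Your write-up is in fact slightly more detailed than the paper's (you verify the cocycle identity and the faithfulness of $\psi$ explicitly, and you normalize $\varphi(w\,\cdot\,w^*)$ to a state rather than leaving it as a positive functional).
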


\begin{proof}
Take a minimal projection $p \in M^\alpha$.
Then the restricted action $\alpha^p \actson pMp$ is ergodic.
Since $M$ is a $\sigma$-finite type ${\rm III}$ factor, there exists a partial isometry $w$ satisfying $w^*w = 1$ and $ww^* = p$.
Define an $\alpha$-cocycle $v\colon \Gamma \to \mathcal U(M)$ by $v_g := w^* \alpha_g(w)$ for $g \in \Gamma$. Consider the $\ast$-isomorphism
\[
M \to pMp, \quad x \mapsto w x w^*,
\]
and transport the action $\alpha^p$ from $pMp$ to $M$. Then the resulting action  via the $\ast$-isomorphism coincides with $\alpha^v$. Hence $\alpha^v$ is ergodic.
If $\alpha$ preserves a faithful state $\varphi \in M_\ast$, then $\alpha^v$ preserves the faithful positive functional $\varphi(w\,\cdot\,w^*) \in M_\ast$.
\end{proof}

\begin{lem}\label{lem-fixed-point-typeIII}
Let $\alpha\colon \Lambda \actson M$ be an action of a finite group on a $\sigma$-finite type ${\rm III_1}$ factor $M$. Then $M^\alpha$ is a finite direct sum of type ${\rm III_1}$ factors.
\end{lem}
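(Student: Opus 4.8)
The plan is to pass to the continuous core, where the statement becomes a fact about fixed‑point algebras of finite group actions on a semifinite factor, and to reduce that in turn to classical index theory.

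First I would fix an $\alpha$‑invariant faithful normal state: since $\Lambda$ is finite, averaging any faithful normal state over $\Lambda$ gives $\varphi\in M_\ast$ with $\varphi\circ\alpha_g=\varphi$, and $E:=\frac1{|\Lambda|}\sum_{g\in\Lambda}\alpha_g$ is then a $\varphi$‑preserving conditional expectation of $M$ onto $M^\alpha$. By Takesaki's theorem $M^\alpha$ is globally $\sigma^\varphi$‑invariant and $\sigma^\varphi|_{M^\alpha}=\sigma^{\varphi|_{M^\alpha}}$. Because $\varphi\circ\alpha_g=\varphi$, each $\alpha_g$ commutes with $\sigma^\varphi$, so $\alpha$ extends to an action $\widetilde\alpha\colon\Lambda\actson M\rtimes_{\sigma^\varphi}\R$ that is trivial on $L(\R)$, commutes with the dual trace‑scaling flow $\theta$, and preserves the canonical trace $\tau$ (indeed $\tau\circ\widetilde\alpha_g$ is again a trace, hence $c_g\tau$ for a scalar $c_g$, and $g\mapsto c_g$ is a homomorphism of the finite group $\Lambda$ into $\R^\times_+$, so $c_g=1$). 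A direct computation on the generators $M$ and $L(\R)$ then identifies the fixed‑point algebra of $\widetilde\alpha$ with the core of $M^\alpha$:
\[
 \bigl(M\rtimes_{\sigma^\varphi}\R\bigr)^{\widetilde\alpha}=M^\alpha\rtimes_{\sigma^{\varphi|_{M^\alpha}}}\R ,
\]
compatibly with $\theta$.

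Now I would invoke that $M$ is of type $\mathrm{III}_1$: by Connes' theorem its core $P:=M\rtimes_{\sigma^\varphi}\R$ is a type $\mathrm{II}_\infty$ factor. Thus $P^{\widetilde\alpha}$ is the fixed‑point algebra of a trace‑preserving action of the finite group $\Lambda$ on the factor $P$, and $\frac1{|\Lambda|}\sum_g\widetilde\alpha_g$ is a conditional expectation onto it with index $\le|\Lambda|$. I claim $\mathcal Z(P^{\widetilde\alpha})$ is finite‑dimensional. If not, pick countably many orthogonal nonzero projections $z_i\in\mathcal Z(P^{\widetilde\alpha})$ and, using semifiniteness of $P^{\widetilde\alpha}$, choose $0\ne e_i\le z_i$ in $P^{\widetilde\alpha}$ with $\sum_i\tau(e_i)<\infty$; then $e:=\sum_i e_i$ is a finite projection in $P^{\widetilde\alpha}$ and $z_i e=e_i$. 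Hence $ePe$ is a $\mathrm{II}_1$ factor, $\widetilde\alpha$ restricts to it (since $\widetilde\alpha_g(e)=e$), and $(ePe)^{\widetilde\alpha}=eP^{\widetilde\alpha}e$ contains the infinitely many orthogonal central projections $e_i$; but $(ePe)^{\widetilde\alpha}\subset ePe$ has index $\le|\Lambda|$, contradicting the classical fact that a finite‑index subfactor of a $\mathrm{II}_1$ factor has finite‑dimensional centre. (Alternatively one may just cite that a finite‑index subalgebra of a factor has finite‑dimensional relative commutant.) Since every $z\in\mathcal Z(M^\alpha)$ satisfies $z\varphi=\varphi z$ on $M^\alpha$, it lies in the centralizer of $\varphi|_{M^\alpha}$, hence commutes with both $M^\alpha$ and $L(\R)$ in $P^{\widetilde\alpha}$; therefore $\mathcal Z(M^\alpha)\subset\mathcal Z(P^{\widetilde\alpha})$ is finite‑dimensional, and $M^\alpha=N_1\oplus\cdots\oplus N_n$ with each $N_i$ a factor.

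It remains to see each $N_i$ is of type $\mathrm{III}_1$. The core splits along this decomposition, so $\bigoplus_i\bigl(N_i\rtimes_{\sigma}\R\bigr)=P^{\widetilde\alpha}$ and each $\mathcal Z(N_i\rtimes_\sigma\R)$ is finite‑dimensional. On the other hand the flow of weights of the factor $N_i$ is ergodic — by Takesaki duality $\mathcal Z(N_i\rtimes_\sigma\R)^{\theta}\subset\mathcal Z\bigl((N_i\rtimes_\sigma\R)\rtimes_\theta\R\bigr)=\mathcal Z\bigl(N_i\ovt B(L^2(\R))\bigr)=\C$ — and a connected group acts trivially on a finite set, so an ergodic flow on a finite‑dimensional abelian algebra is supported on one point; hence $\mathcal Z(N_i\rtimes_\sigma\R)=\C$, i.e.\ the core of $N_i$ is a factor, which by Connes' characterisation of type $\mathrm{III}_1$ forces $N_i$ to be of type $\mathrm{III}_1$. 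This completes the argument. The only genuinely delicate point is establishing finite‑dimensionality of $\mathcal Z(P^{\widetilde\alpha})$: one must either carefully reduce to a $\mathrm{II}_1$ corner as above or import the finite‑index/finite relative commutant theorem in the right (semifinite, non‑tracial base) generality; the rest is routine once the core picture is in place.
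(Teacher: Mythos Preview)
Your argument is correct and takes a genuinely different route from the paper. You work directly with the fixed-point algebra and its continuous core, identifying $(M\rtimes_{\sigma^\varphi}\R)^{\widetilde\alpha}$ with the core of $M^\alpha$; you then use the Pimsner--Popa inequality for the averaging expectation (reducing to a $\mathrm{II}_1$ corner) to get $\dim\mathcal Z(P^{\widetilde\alpha})<\infty$, and finish via ergodicity of the flow of weights together with the observation that a connected group acts trivially on a finite-dimensional abelian algebra. The paper instead passes through the crossed product $M\rtimes_\alpha\Lambda$ and the basic construction $\langle M,M^\alpha\rangle$: it computes $M'\cap(M\rtimes_\alpha\Lambda)$ and $C_\varphi(M)'\cap(C_\varphi(M)\rtimes_{\widetilde\alpha}\Lambda)$ explicitly as $\mathrm{span}\{\lambda_g u_g^*: g\in\Lambda_{\mathrm{inn}}\}$, uses the coincidence $\Lambda_{\mathrm{inn}}=\Lambda_{\mathrm{mod}}$ (finite group, type $\mathrm{III}_1$) to deduce $\mathcal Z(C_\varphi(M\rtimes\Lambda))=\mathcal Z(M\rtimes\Lambda)$, concludes that $M\rtimes_\alpha\Lambda$ is a finite direct sum of $\mathrm{III}_1$ factors, and transfers this to $M^\alpha$ via the surjection onto the basic construction and the anti-isomorphism $\langle M,M^\alpha\rangle\simeq M^\alpha$ in type $\mathrm{III}$. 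Your approach is more direct and avoids the basic construction detour, at the cost of importing the finite-index/finite-relative-commutant fact (which you rightly flag as the one delicate point); the paper's approach is more hands-on, yielding the explicit structure of the relative commutants along the way, and never needs to invoke index theory as a black box.
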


\begin{proof}
Let $\Lambda_{\mathrm{inn}} := \alpha^{-1}(\mathrm{Int}(M))$.
For each $g\in \Lambda_{\mathrm{inn}}$, fix $u_g \in \mathcal U(M)$ such that $\alpha_g = \Ad(u_g)$.
Since $M$ is a factor, each $u_g$ is unique up to a scalar multiple.
Using this uniqueness and the standard Fourier expansion argument, we have
\[
M'\cap (M\rtimes_\alpha \Lambda)
= \mathrm{span}\{ \lambda_g u_g^* \mid g\in \Lambda_{\mathrm{inn}}\}.
\]
In particular, this algebra is finite-dimensional.

Let $\langle M, M^\alpha \rangle$ be the basic construction for $M^\alpha \subset M$. Since the action of a finite group is integrable, there exists a normal surjective $\ast$-homomorphism
\[
M\rtimes_{\alpha}\Lambda \to \langle M, M^\alpha \rangle
\]
that restricts to the identity on $M$.
Hence there exists a central projection $z \in M\rtimes_{\alpha}\Lambda$ such that
\[
(M\rtimes_\alpha\Lambda)z \simeq \langle M, M^\alpha \rangle.
\]
It follows that $M'\cap \langle M, M^\alpha\rangle$ is isomorphic to the finite-dimensional algebra
\[
 (Mz)'\cap [(M\rtimes_\alpha \Lambda)z]=[M'\cap (M\rtimes_\alpha \Lambda)]z ,
\]
so it is finite-dimensional.
Applying the compression map by the modular conjugation on $L^2(M)$, we conclude that $(M^\alpha)'\cap M$ is finite-dimensional.

Since $\Lambda$ is finite, $\alpha$ preserves some faithful state $\varphi\in M_\ast$.
Then $\alpha$ commutes with $\sigma^\varphi$, and hence there is a natural isomorphism on continuous cores:
\[
C_\varphi(M\rtimes_\alpha \Lambda)\simeq C_\varphi(M)\rtimes_{\widetilde{\alpha}} \Lambda,
\]
where $\widetilde{\alpha}\colon \Lambda \actson C_\varphi(M)$ is the canonical extension, acting trivially on $\mathbb R$. 
Since $M$ is a type ${\rm III_1}$ factor, for each $g\in \Lambda$, $\widetilde{\alpha}_g$ is inner if and only if $\alpha_g \in \mathrm{Mod}(M)$ \cite[Lemma XII.6.14]{Ta03}. Observe that  $\Lambda_{\mathrm{inn}} = \Lambda_{\mathrm{mod}}$ as $\Lambda$ is finite and $M$ is of type $\rm III_1$. Comparing coefficients in the Fourier expansion in the crossed product $C_\varphi(M)\rtimes_{\widetilde{\alpha}} \Lambda$, we obtain
\[
C_\varphi(M)'\cap (C_\varphi(M)\rtimes_{\widetilde{\alpha}} \Lambda)
= \mathrm{span}\{ \lambda_g u_g^* \mid g\in \Lambda_{\mathrm{inn}}\}
\subset M\rtimes_{\alpha} \Lambda.
\]
Thus we have $\mathcal Z(C_\varphi(M\rtimes \Lambda)) = \mathcal Z(M\rtimes \Lambda)$, which is finite-dimensional. Taking a minimal projection $z \in \mathcal Z(M\rtimes \Lambda)$, we find
\[
\mathcal Z(C_\varphi((M\rtimes \Lambda)z))
= \mathcal Z(C_\varphi(M\rtimes \Lambda))z
= \mathbb C z,
\]
so each $(M\rtimes \Lambda)z$ is a type ${\rm III_1}$ factor.
Therefore $M\rtimes_\alpha \Lambda$ is a finite direct sum of type ${\rm III_1}$ factors.

Considering again the surjection $M\rtimes_{\alpha}\Lambda \to \langle M, M^\alpha\rangle$, we see that $\langle M, M^\alpha\rangle$ is also a finite direct sum of type ${\rm III_1}$ factors.
Since this algebra is of type ${\rm III}$, it is anti-isomorphic to its commutant, and hence $\langle M, M^\alpha\rangle$ and $M^\alpha$ are $\ast$-isomorphic.
Consequently, $M^\alpha$ is a finite direct sum of type ${\rm III_1}$ factors.
\end{proof}

\subsection*{Proof of main theorems}

Now we prove main theorems. 

\begin{proof}[{\bf Proof of Theorem \ref{thmA}}]
(1)$\Rightarrow$(2) 
Consider the $\mathbb Z$-action induced by $\theta$ that is a state preserving outer action.
If $\mathbb Z_{\mathrm{mod}} = \{e\}$, then the conclusion follows from Theorem \ref{thm-case1}. If $\mathbb Z_{\mathrm{mod}} \neq \{e\}$, we can use Theorem \ref{thm-case2}.

(2)$\Rightarrow$(1) This follows from the same argument as in \cite[Theorem 3.2]{MV23}.
\end{proof}

\begin{proof}[{\bf Proof of Corollary \ref{corB}}]
By \cite[Lemma~5.1]{HI22}, there exists $u\in\mathcal U(M)$ such that $\Ad(u)\circ\theta$ is a  state preserving action. Then we can apply Theorem \ref{thmA}.
\end{proof}

\begin{proof}[{\bf Proof of Theorem \ref{thmC}}]
	This is immediate from Theorem \ref{thm-case1}.
\end{proof}

\begin{proof}[{\bf Proof of Corollary \ref{corD}}]
By \cite[Theorem~E(3)]{Ma23}, the tensor product $M\overline\otimes N$ has trivial bicentralizer. Suppose that $\alpha_g\otimes\id \in \mathrm{Mod}(M\overline\otimes N)$  for some $g\in\Gamma$. Then there exist $t\in\mathbb R$, faithful states $\varphi\in M_\ast$, $\psi\in N_\ast$, and a unitary $u\in\mathcal U(M\overline\otimes N)$ such that
\[
\alpha_g\otimes\id = \Ad(u)\circ(\sigma_t^\varphi\otimes\sigma_t^\psi).
\]
It follows that $(\alpha_g\circ\sigma_{-t}^\varphi)\otimes\sigma_{-t}^\psi \in \mathrm{Int}(M\overline\otimes N)$,
hence $\alpha_g\circ\sigma_{-t}^\varphi\in \mathrm{Int}(M)$ and $\sigma_{-t}^\psi \in \mathrm{Int}(N)$. Since $N$ is a type $\mathrm{III}_1$ factor, we must have $t=0$. This implies $\alpha_g\in\mathrm{Int}(M)$ and $g=e$. Thus, for the action $\alpha\otimes\id$, we have $\Gamma_{\mathrm{mod}}=\{e\}$. The conclusion then follows from Theorem \ref{thmC}.
\end{proof}

\section{Further results}

In this last section, we collect several related results.

\subsection*{The case of type $\rm III_\lambda$ factors}

For a state preserving action $\alpha\colon \Gamma \actson (M,\varphi)$, we denote by $\mathcal C(\alpha,\varphi)$ the set of all $\alpha$-cocycles taking values in $M_\varphi$. For each $v\in \mathcal C(\alpha,\varphi)$, the perturbed action $\alpha^v$ is again $\varphi$-preserving. We define $\mathcal C_{\mathrm{erg}}(\alpha,\varphi)$ and $\mathcal C_{\mathrm{wm}}(\alpha,\varphi)$ to be the subsets of $\mathcal C(\alpha,\varphi)$ consisting of ergodic and weakly mixing cocycles, respectively. If $M$ has separable predual and $\Gamma$ is countable, then $\mathcal C(\alpha,\varphi)$ is a Polish space. We also note that if $M$ is a type $\mathrm{II}_1$ factor and $\varphi$ is a trace, then $\mathcal C(\alpha,\varphi) = \mathcal C(\alpha)$.

The following proposition is a special case of Theorem~\ref{thm-case1}.  
Its proof is simpler, since under the assumption $M_\varphi' \cap M=\mathbb C$, 
the argument of \cite{MV23} applies directly.

\begin{prop}\label{prop-case3}
Let $\alpha \colon \Gamma \actson (M,\varphi)$ be a state preserving outer action 
satisfying the following assumptions:
\begin{itemize}
  \item $\Gamma$ is amenable and countably infinite;
  \item $\Gamma_{\mathrm{mod}} = \{e\}$;
  \item $M$ is a diffuse factor with separable predual and $M_\varphi' \cap M=\mathbb C$.
\end{itemize}
Then both $\mathcal C_{\mathrm{erg}}(\alpha,\varphi)$ and 
$\mathcal C_{\mathrm{wm}}(\alpha,\varphi)$ are dense $G_\delta$ subsets of 
$\mathcal C(\alpha,\varphi)$.
\end{prop}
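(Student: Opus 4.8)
The plan is to re-run the proof of Theorem~\ref{thm-case1}, together with the lemmas of Section~3, inside the smaller space $\mathcal C(\alpha,\varphi)$, using the subfactor $M_\varphi\subset M$ --- rather than $M$ sitting inside $(M^\omega)_{\varphi^\omega}$ --- as the source of randomizing unitaries. Note first that $M_\varphi$ is a diffuse factor: its center lies in $M_\varphi'\cap M=\mathbb C$, and it has no minimal projection, for a factor with a minimal projection is finite dimensional, whence $M_\varphi'\cap M=\mathbb C$ would force $M$ itself to be finite dimensional, contradicting the hypothesis. Moreover $M_\varphi\subset M$ is irreducible and carries the $\varphi$-preserving conditional expectation, and $\varphi|_{M_\varphi}$ is a trace. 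The point of working with $M_\varphi$ is that the perturbing unitaries will be produced in $\mathcal U(M_\varphi)$, so the resulting cocycles automatically lie in $\mathcal C(\alpha,\varphi)$ and are exactly $\varphi$-preserving.

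For the $G_\delta$ part I would adapt Lemma~\ref{lem-cocycle2.5}: with the invariant state frozen equal to $\varphi$, define for $x\in M\setminus\mathbb C$, finite $E\subset M$ and $\varepsilon>0$ the sets $\mathcal U(x),\mathcal V(E,\varepsilon)\subset\mathcal C(\alpha,\varphi)$ exactly as there (with $\psi$ replaced by $\varphi$). Openness is immediate, and the identifications $\mathcal C_{\mathrm{erg}}(\alpha,\varphi)=\bigcap_{x\in X}\mathcal U(x)$ and $\mathcal C_{\mathrm{wm}}(\alpha,\varphi)=\bigcap_{E,n}\mathcal V(E,1/n)$, for a countable $\sigma$-$\ast$strongly dense $X\subset M\setminus\mathbb C$, go through verbatim using \cite[Lemma~5.1]{AHHM18}. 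By the Baire category theorem it then suffices to prove that each $\mathcal U(x)\cap\mathcal V(E,\varepsilon)$ is dense; and, just as in the proof of Theorem~\ref{thm-case1}, the homeomorphism $v\mapsto vw^\ast$ of $\mathcal C(\alpha,\varphi)$ onto $\mathcal C(\gamma,\varphi)$ with $\gamma=\mathrm{Ad}(w)\circ\alpha$ reduces this to approximating the trivial cocycle $1$.

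The heart of the matter is the $\mathcal C(\alpha,\varphi)$-version of Lemma~\ref{lem-cocycle3.6}: given $\varepsilon>0$ and finite $F\subset\Gamma$, one must find $h\in\Gamma$ and $U\in\mathcal U\bigl((M_\varphi)^\omega\bigr)$ with $\varphi^\omega(U)=0$, $\|\alpha^\omega_g(U)-U\|_{\varphi^\omega}<\varepsilon$ for $g\in F$, and $M$, $U$, $\alpha^\omega_h(U)$ mutually $\ast$-free for $\varphi^\omega$. The key input is the equality $\mathrm{BC}(M_\varphi\subset M\rtimes\Gamma,\varphi)=M_\varphi'\cap(M\rtimes\Gamma)=\mathbb C$ (the first identity holds because commutation with $M_\varphi$ passes to sequences and $\varphi^\omega$ is tracial on $(M_\varphi)^\omega$). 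Granting this, Proposition~\ref{prop-free-independence1} applies to the inclusion $M_\varphi\subset M\rtimes\Gamma$ with its $\varphi$-preserving expectation and $X=M\rtimes\Gamma$, yielding $V\in\mathcal U\bigl((M_\varphi)^\omega\bigr)$ with $M\rtimes\Gamma$ and $V(M\rtimes\Gamma)V^\ast$ $\ast$-free; exactly as in Lemma~\ref{lem-free-independence2}, $M$ and $\{\alpha^\omega_g(VM_\varphi V^\ast)\}_{g\in\Gamma}$ are then mutually $\ast$-free inside $M^\omega$, and $P:=\mathrm W^\ast\{\alpha^\omega_g(VM_\varphi V^\ast)\}_{g\in\Gamma}\subset(M_\varphi)^\omega$ is a free Bernoulli system, $(P,\varphi^\omega)\simeq\ast_{g\in\Gamma}(M_\varphi,\varphi|_{M_\varphi})$, that is $\ast$-free from $M$. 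Since $\varphi|_{M_\varphi}$ is a trace and $M_\varphi$ is diffuse, the tracial case of the argument of Lemma~\ref{lem-amenable-strongly-ergodic} produces a finite $J\subset\Gamma$ and $U\in\mathcal U\bigl(\ast_{g\in J}(M_\varphi,\varphi|_{M_\varphi})\bigr)\subset\mathcal U(P)$ with $\varphi^\omega(U)=0$ and $\|\alpha^\omega_g(U)-U\|_{\varphi^\omega}<\varepsilon$ for $g\in F$; picking $h$ with $hJ\cap J=\emptyset$ gives the required mutual $\ast$-freeness of $M$, $U$, $\alpha^\omega_h(U)$. Because $U\in(M_\varphi)^\omega$ it commutes with $\varphi^\omega$ and is represented by a sequence in $\mathcal U(M_\varphi)$; running the estimates of Lemma~\ref{lem-cocycle3.5} verbatim --- with the term $\|u\varphi-\varphi u\|$ now vanishing --- then produces, for any prescribed $x,E,\varepsilon$, a unitary $u\in\mathcal U(M_\varphi)$ and hence a cocycle $v=(u^\ast\alpha_g(u))_g\in\mathcal C(\alpha,\varphi)$, arbitrarily close to $1$ and lying in $\mathcal U(x)\cap\mathcal V(E,\varepsilon)$. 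Baire category then concludes.

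The main obstacle --- the one point that is not a direct transcription of \cite{MV23} --- is establishing $M_\varphi'\cap(M\rtimes\Gamma)=\mathbb C$, the analog here of Lemma~\ref{lem-relative-bicentralizer} and the place where the hypothesis $\Gamma_{\mathrm{mod}}=\{e\}$ enters. Expanding an element of $M_\varphi'\cap(M\rtimes\Gamma)$ in its Fourier series, a nonzero coefficient at some $g\neq e$ would, using $M_\varphi'\cap M=\mathbb C$ and $\alpha_g(M_\varphi)=M_\varphi$ via a polar decomposition argument, yield a unitary $v\in\mathcal U(M)$ with $\mathrm{Ad}(v)|_{M_\varphi}=\alpha_g|_{M_\varphi}$, hence a $\varphi$-preserving automorphism $\mathrm{Ad}(v^\ast)\circ\alpha_g$ of $M$ fixing $M_\varphi$ pointwise. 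The delicate fact --- classical, and available here because $M_\varphi=M^{\sigma^\varphi}$ is irreducible, cf.\ the theory of relative bicentralizers and \cite[Lemma~XII.6.14]{Ta03} --- is that any automorphism of $M$ fixing $M_\varphi$ pointwise lies in $\mathrm{Mod}(M)$; this forces $\alpha_g\in\mathrm{Mod}(M)$ and hence $g=e$, a contradiction. Verifying this carefully across the types of diffuse factor permitted by the hypothesis is the technical heart of the argument; the remaining steps are routine adaptations of the already-established lemmas.
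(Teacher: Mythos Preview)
Your approach coincides with the paper's: both re-run Lemmas~\ref{lem-free-independence2}--\ref{lem-cocycle3.5} and Theorem~\ref{thm-case1} with $M_\varphi$ in place of $M$ as the source of randomizing unitaries, so that $V,U\in(M_\varphi)^\omega$ and the resulting cocycles land in $\mathcal C(\alpha,\varphi)$. The paper's four-line sketch absorbs into its citation of \cite{HI14} the irreducibility $M_\varphi'\cap(M\rtimes_\alpha\Gamma)=\mathbb C$ that is needed to apply \cite{HI14} to this inclusion; you have correctly identified and unpacked this as the place where the hypothesis $\Gamma_{\mathrm{mod}}=\{e\}$ enters.
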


\begin{proof}
By the assumption $M_\varphi' \cap M=\mathbb C$ and \cite{HI14}, the unitaries 
appearing in the conclusion of Proposition~\ref{prop-free-independence1} 
may be chosen inside $(M_\varphi)^\omega$.  
Thus in Lemma~\ref{lem-free-independence2}, if we set
\[
  P_0 := \mathrm{W}^*\{\alpha_g^\omega (V M_\varphi V^*)\}_{g\in \Gamma},
\]
then $P_0 \subset (M_\varphi)^\omega$.  
Since $M_\varphi$ is diffuse, we may apply Lemma~\ref{lem-amenable-strongly-ergodic} 
to the free product $\ast_{g\in \Gamma} (M_\varphi,\varphi)$, and therefore the unitary 
$U$ obtained in the conclusion of Lemma~\ref{lem-cocycle3.6} can be chosen from 
$\mathcal U\bigl( (M_\varphi)^\omega \bigr)$.

Consequently, in Lemma~\ref{lem-cocycle3.5}, when approximating cocycles by elements of 
$\mathcal U(x)\cap \mathcal V(E,\varepsilon)$, one may approximate them by cocycles 
with values in $M_\varphi$.  
Then the conclusion follows by the same proof as in Theorem~\ref{thm-case1}.
\end{proof}

If $M$ is a type $\mathrm{III}_\lambda$ factor ($0<\lambda<1$) and $\varphi$ is a 
$\lambda$-trace, then $M_\varphi' \cap M=\mathbb C$ holds automatically.  
Thus we obtain the following.

\begin{cor}
Let $M$ be a type $\mathrm{III}_\lambda$ factor $(0<\lambda<1)$ with separable predual, and let 
$\varphi\in M_\ast$ be a faithful state such that $\sigma^\varphi$ has period 
$T$, where $T=-2\pi/\log(\lambda)$.  
Let $\alpha \colon \Gamma \actson (M,\varphi)$ be a state preserving outer action 
satisfying the following assumptions:
\begin{itemize}
  \item $\Gamma$ is amenable and countably infinite;
  \item $\Gamma_{\mathrm{mod}} = \{e\}$.
\end{itemize}
Then both $\mathcal C_{\mathrm{erg}}(\alpha,\varphi)$ and 
$\mathcal C_{\mathrm{wm}}(\alpha,\varphi)$ are dense $G_\delta$ subsets 
of $\mathcal C(\alpha,\varphi)$.
\end{cor}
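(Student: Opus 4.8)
The plan is to deduce this corollary from Proposition~\ref{prop-case3} by verifying its hypotheses, the only nontrivial one being that $M_\varphi'\cap M=\mathbb C$. First I would recall the structure of a type $\mathrm{III}_\lambda$ factor equipped with a $\lambda$-trace: the modular automorphism group $\sigma^\varphi$ is periodic with period $T=-2\pi/\log\lambda$, so it factors through an action of the circle $\mathbb R/T\mathbb Z$. The fixed point algebra $M_\varphi$ is then the fixed point algebra of this compact group action, and the classical discrete decomposition theory (Connes) tells us that $M_\varphi$ is a type $\mathrm{II}_1$ factor and that $M\simeq M_\varphi\rtimes_\theta\mathbb Z$ for the automorphism $\theta$ implementing the generator of the dual $\widehat{\mathbb R/T\mathbb Z}=\mathbb Z$, with $\theta$ scaling the trace by $\lambda$. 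In particular $M_\varphi$ is a factor, so $M_\varphi'\cap M_\varphi=\mathbb C$.

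Next I would promote this to $M_\varphi'\cap M=\mathbb C$. Using the spectral decomposition $M=\bigoplus_{n\in\mathbb Z}M_n$ into the eigenspaces of the periodic modular action, where $M_0=M_\varphi$ and each $M_n$ is an $M_\varphi$-$M_\varphi$ bimodule, any element $x\in M_\varphi'\cap M$ has Fourier components $x_n\in M_n$ each of which commutes with $M_\varphi$. For $n\neq 0$ one has $M_n = M_\varphi\, w^n$ for a unitary $w$ normalizing $M_\varphi$ and implementing $\theta^n$ (which scales the trace by $\lambda^n\neq 1$), so $x_n w^{-n}\in M_\varphi$ and commuting with $M_\varphi$ forces, via the trace-scaling property of $\theta^n$, that $x_n=0$; thus $x=x_0\in M_\varphi'\cap M_\varphi=\mathbb C$. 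This is exactly the standard ``$M_\varphi'\cap M=\mathbb C$ for type $\mathrm{III}_\lambda$ with a $\lambda$-trace'' fact asserted in the paragraph preceding the corollary, so in the written-up version one could simply cite it, but including the short Fourier argument makes the deduction self-contained.

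Finally, with $M_\varphi'\cap M=\mathbb C$ established, $M$ is automatically a factor (indeed $\mathcal Z(M)\subset M_\varphi'\cap M=\mathbb C$) and it is diffuse since it is a type $\mathrm{III}$ factor; it has separable predual by hypothesis. The remaining hypotheses of Proposition~\ref{prop-case3} — that $\Gamma$ is amenable and countably infinite, and that $\Gamma_{\mathrm{mod}}=\{e\}$ — are assumed directly. Applying Proposition~\ref{prop-case3} then yields that $\mathcal C_{\mathrm{erg}}(\alpha,\varphi)$ and $\mathcal C_{\mathrm{wm}}(\alpha,\varphi)$ are dense $G_\delta$ subsets of $\mathcal C(\alpha,\varphi)$, which is the claim. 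The only mild subtlety — and the place I would be most careful — is making sure the chosen $\varphi$ with periodic modular flow of period exactly $T$ really gives a $\lambda$-trace in the sense needed for the $M_\varphi'\cap M=\mathbb C$ computation, i.e.\ that the trace-scaling constant of $\theta$ is genuinely $\lambda\neq 1$ rather than some root of unity situation; this is guaranteed precisely because $M$ is of type $\mathrm{III}_\lambda$ (and not $\mathrm{III}_{\lambda'}$ for $\lambda'$ a power of $\lambda$), so no further argument is required beyond invoking the classification of the $S$-invariant.
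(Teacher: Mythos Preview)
Your overall strategy is correct and matches the paper exactly: verify $M_\varphi'\cap M=\mathbb C$ and invoke Proposition~\ref{prop-case3}. The paper's proof is nothing more than the sentence preceding the corollary (``then $M_\varphi'\cap M=\mathbb C$ holds automatically''), and your own suggestion to ``simply cite it'' reproduces this.

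Your optional self-contained argument, however, contains a genuine error. You write $M\simeq M_\varphi\rtimes_\theta\mathbb Z$ with $M_\varphi$ a type $\mathrm{II}_1$ factor and $\theta$ scaling the trace by $\lambda$; but a $\mathrm{II}_1$ factor admits no trace-scaling automorphism, and any crossed product of a $\mathrm{II}_1$ factor by $\mathbb Z$ is semifinite, never type $\mathrm{III}_\lambda$. You are conflating $M_\varphi$ (the centralizer of the \emph{state} $\varphi$, which is $\mathrm{II}_1$) with the $\mathrm{II}_\infty$ factor $N=M_\psi$ appearing in Connes' discrete decomposition $M\simeq N\rtimes_\theta\mathbb Z$, where $\psi$ is a \emph{weight}. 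Concretely, for $n\neq 0$ the spectral subspace $M_n$ contains no unitary at all: the KMS condition gives $\varphi(uu^*)=\lambda^{-n}\varphi(u^*u)$ for $u\in M_n$, so $u^*u=uu^*=1$ forces $n=0$. Hence your decomposition $M_n=M_\varphi\,w^n$ with $w$ unitary in $M$ cannot exist. A correct route is: for $x_n\in M_\varphi'\cap M_n$ one has $x_n^*x_n,\,x_nx_n^*\in\mathcal Z(M_\varphi)$, so once $M_\varphi$ is known to be a factor these are scalars, whence a nonzero $x_n$ would be a scalar multiple of a unitary in $M_n$, impossible for $n\neq 0$. The factoriality of $M_\varphi$ is itself the nontrivial input, equivalent (via Connes' theorem) to $\mathrm{Sp}(\sigma^\varphi)=\Gamma(\sigma^\varphi)$, which holds precisely because $M$ is type $\mathrm{III}_\lambda$ with period exactly $T$ --- the point you correctly flag in your last paragraph, though phrased through the wrong crossed-product picture. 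Either repair the argument along these lines or, as you and the paper both suggest, just cite the classical fact.
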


\begin{rem}\upshape
The assumption $\Gamma_{\mathrm{mod}}=\{e\}$ is essential.  
Indeed, if $\Gamma=\mathbb Z=\Gamma_{\mathrm{mod}}$, then it holds that
$\mathcal C_{\mathrm{erg}}(\alpha,\varphi)=\emptyset$. Indeed,  
write $\alpha_1=\theta$, and express 
$\theta=\operatorname{Ad}(u)\circ\sigma_t^\varphi$ for some 
$u\in\mathcal U(M)$ and $t\in\mathbb R$.  
Since $\theta$ preserves $\varphi$, we have $u\in M_\varphi$.  
If $u\in\mathbb C$, then $M^\alpha$ contains the type $\mathrm{II}_1$ factor $M_\varphi$.  
If $u\notin\mathbb C$, then $u\in M^\alpha$, and hence $M^\alpha\neq\mathbb C$.  
Thus $\alpha$ is not ergodic.
\end{rem}

\subsection*{Trace scaling actions}

\begin{prop}
Let $(M,\Tr)$ be a type $\mathrm{II}_\infty$ factor with trace.  
If $\alpha\colon \mathbb Z \actson M$ is a trace scaling action, then for any 
$\alpha$-cocycle $v$, the perturbed action $\alpha^v$ is not ergodic.
\end{prop}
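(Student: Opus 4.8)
The plan is to first show that the cocycle plays no role, and then to exhibit a nontrivial fixed-point algebra by extracting it from the trace-scaling itself. Since every inner automorphism preserves $\Tr$, the perturbed automorphism $\alpha^v_1=\Ad(v_1)\circ\alpha_1$ scales $\Tr$ by the same factor $\lambda$ as $\alpha_1$, and $\lambda\neq 1$ by hypothesis; writing $\beta:=\alpha^v_1$ and replacing $\beta$ by $\beta^{-1}$ if necessary (which changes neither $M^\beta$ nor the statement), I may assume $\Tr\circ\beta=\lambda\Tr$ with $0<\lambda<1$, and it then suffices to prove $M^\beta\neq\mathbb C$.

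Next I would build a $\beta$-shifted tower of finite-trace projections. Fixing a nonzero projection $e\in M$ with $\Tr(e)<\infty$, put $p:=\bigvee_{n\ge 0}\beta^n(e)$. Since $\Tr(\beta^n(e))=\lambda^n\Tr(e)$, subadditivity of $\Tr$ on joins together with the geometric series gives $\Tr(p)\le\Tr(e)/(1-\lambda)<\infty$, while $\beta(p)=\bigvee_{n\ge 1}\beta^n(e)\le p$. As $\Tr(\beta(p))=\lambda\Tr(p)<\Tr(p)$, the inclusion $\beta(p)\le p$ is strict, so $s_0:=p-\beta(p)$ is a nonzero projection with $0<\Tr(s_0)=(1-\lambda)\Tr(p)<\infty$. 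Setting $s_n:=\beta^n(s_0)=\beta^n(p)-\beta^{n+1}(p)$ for $n\in\mathbb Z$: the sequence $(\beta^n(p))_{n\in\mathbb Z}$ is non-increasing, so the $s_n$ are mutually orthogonal, and visibly $\beta(s_n)=s_{n+1}$.

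Then I would harvest the fixed points from the corners cut out by the tower. The $\ell^\infty$-direct sum $B:=\bigoplus_{n\in\mathbb Z}s_nMs_n$ is a von Neumann subalgebra of $M$, globally $\beta$-invariant, with $\beta$ sending the $n$-th summand $*$-isomorphically onto the $(n+1)$-th. Hence $x\in B$ is $\beta$-fixed exactly when $s_nxs_n=\beta^n(s_0xs_0)$ for all $n$, so $x$ is determined by $y:=s_0xs_0\in s_0Ms_0$; conversely, for any $y\in s_0Ms_0$ the series $\sum_{n\in\mathbb Z}\beta^n(y)$ converges $*$-strongly to an element of $M$ (the summands lie in orthogonal corners, and $\beta$ is isometric so $\|\beta^n(y)\|=\|y\|$), and one checks this yields a $*$-isomorphism $s_0Ms_0\xrightarrow{\sim}B^\beta\subset M^\beta$. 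Since $0<\Tr(s_0)<\infty$, the corner $s_0Ms_0$ is a $\mathrm{II}_1$ factor, hence infinite-dimensional, so $M^\beta\neq\mathbb C$ and $\alpha^v$ is not ergodic.

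The step I expect to be the main obstacle is this last one. A trace-scaling automorphism fixes no nonzero finite-trace projection, and manufacturing an invariant infinite-trace projection directly looks awkward; the idea is instead to build the $\beta$-shifted finite tower $\{s_n\}$ and read off the fixed points from the direct sum of the corners it cuts out, where $\beta$ acts as a shift of a direct sum of $\mathrm{II}_1$ factors whose ``diagonal'' copy of $s_0Ms_0$ is automatically fixed simply because automorphisms are isometric.
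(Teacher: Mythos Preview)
Your proof is correct and takes a genuinely different, more elementary route than the paper.

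The paper argues via the structure theory of type $\mathrm{III}_\lambda$ factors: since $M\rtimes_\alpha\mathbb Z$ is a type $\mathrm{III}_\lambda$ factor, the uniqueness of the discrete decomposition is invoked to identify $M^\alpha$ with $M\rtimes_\alpha\mathbb Z$ itself, and then again to show that the cocycle-perturbed action $\beta=\alpha^v$ is conjugate to $\alpha$, whence $M^\beta\simeq M^\alpha$. Your argument bypasses all of this. You first observe, as the paper does not make explicit, that any cocycle perturbation of a trace-scaling automorphism is again trace-scaling by the same factor (since inner automorphisms preserve $\Tr$), so it suffices to treat a single trace-scaling $\beta$. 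You then construct by hand a $\beta$-shifted orthogonal family $\{s_n\}_{n\in\mathbb Z}$ of nonzero finite-trace projections and exhibit an explicit embedding $s_0Ms_0\hookrightarrow M^\beta$, $y\mapsto\sum_n\beta^n(y)$, using only normality and subadditivity of $\Tr$ and the block-diagonal convergence of uniformly bounded sums over orthogonal corners.

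What each approach buys: the paper's route yields sharper structural information (it identifies $M^\alpha$ as a type $\mathrm{III}_\lambda$ factor), at the cost of invoking the uniqueness theorem for discrete decompositions. Your route is entirely self-contained and constructive; it only shows $M^\beta$ contains a $\mathrm{II}_1$ factor, but that is exactly what is needed, and your initial reduction (the cocycle is irrelevant to the scaling factor) is cleaner than the paper's detour through conjugacy of $\alpha$ and $\beta$.
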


\begin{proof}
Since $M\rtimes_\alpha \mathbb Z$ is a type $\mathrm{III}_\lambda$ factor for some $0<\lambda <1$, this crossed product gives a discrete decomposition.  By the uniqueness of the discrete 
decomposition, the fixed point algebra $M^\alpha$ is isomorphic to 
$M\rtimes_\alpha \mathbb Z$, and hence $\alpha$ is not ergodic.

Next, $\beta:=\alpha^v$ is also trace scaling, and 
$M\rtimes_\alpha \mathbb Z \simeq M\rtimes_\beta \mathbb Z$.  
By the uniqueness of the discrete decomposition again, $\alpha$ and $\beta$ are 
conjugate.  In particular, their fixed point algebras are isomorphic, and thus 
$\beta$ is not ergodic.
\end{proof}

\subsection*{Free product extension}

The following proposition is an adaptation of \cite[Proposition 3.4]{MV23}. It includes the case of actions of free groups.

\begin{prop}
	Let $\Lambda $ be a nontrivial countable group and put $\Gamma := \Z \ast \Lambda$. Let $\alpha\colon \Gamma \actson M$ be an outer action on a type $\rm III_1$ factor with separable predual and with trivial bicentralizer. If $\alpha$ is state preserving on $\Z$, then $\alpha$ admits an ergodic cocycle.
\end{prop}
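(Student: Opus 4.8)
The idea is to reduce to the already-established Case 1 (Theorem~\ref{thm-case1}) applied to the $\Z$-factor of $\Gamma = \Z \ast \Lambda$, using the free product structure to build the cocycle only on $\Z$ and extend it trivially. First I would fix a faithful normal state $\varphi \in M_\ast$ with $\varphi \circ \alpha_n = \varphi$ for all $n \in \Z$; such a state exists by the state preserving hypothesis on $\Z$. Let $\theta := \alpha_1$, so $\theta \in \Aut(M)$ is a $\varphi$-preserving automorphism which is outer (indeed all its powers are outer, since $\alpha$ is an outer action of $\Gamma$ and $\Z \le \Gamma$). Now apply Theorem~\ref{thmA} (or rather Theorem~\ref{thm-case1}/Theorem~\ref{thm-case2} directly, splitting on whether $\Z_{\mathrm{mod}}$ is trivial) to the single automorphism $\theta$: since $M$ is a type $\mathrm{III}_1$ factor with separable predual and trivial bicentralizer, and $\theta$ has infinite order in $\Out(M)$, there exists $u \in \mathcal U(M)$ such that $\Ad(u)\circ\theta$ is state preserving and ergodic, say preserving $\psi := u^*\varphi u \in M_\ast$.

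Next I would extend the $\Z$-cocycle determined by $u$ to a $\Gamma$-cocycle. The unitary $u$ gives an $\alpha|_\Z$-cocycle $v^{(0)}\colon \Z \to \mathcal U(M)$ with $v^{(0)}_1 = u$. By the universal property of the free product $\Gamma = \Z \ast \Lambda$, a cocycle for $\alpha$ is freely determined by its values on the free generators; so define $v\colon \Gamma \to \mathcal U(M)$ to be the unique $\alpha$-cocycle with $v_n = v^{(0)}_n$ for $n \in \Z$ and $v_\lambda = 1$ for $\lambda \in \Lambda$. Equivalently, set $w := v^{(0)}$ on $\Z$ and the trivial cocycle on $\Lambda$, and take the resulting $\alpha^v$. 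Concretely, $\alpha^v$ restricted to $\Z$ is $\Ad(u)\circ\theta$ (up to reindexing), which is ergodic and preserves $\psi$, while $\alpha^v$ restricted to $\Lambda$ equals the original $\alpha|_\Lambda$. One checks directly that $\psi$ is still invariant: it is invariant under $\alpha^v|_\Z$ by construction, and under $\alpha^v|_\Lambda = \alpha|_\Lambda$ one must confirm $\psi = u^*\varphi u$ is preserved; this requires a small adjustment, discussed below.

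Finally, the perturbed action $\alpha^v$ is ergodic because $(M)^{\alpha^v} \subset M^{\alpha^v|_\Z} = M^{\Ad(u)\circ\theta} = \C$, the last equality being the ergodicity obtained above. Thus $v$ is an ergodic cocycle for $\alpha$. The main obstacle I anticipate is reconciling the invariant states: the cocycle on $\Z$ naturally produces the invariant state $\psi = u^*\varphi u$ rather than $\varphi$ itself, and there is no reason the original action $\alpha|_\Lambda$ preserves $\psi$. The clean fix is to first replace $\alpha$ by the conjugate action $\Ad(u^*)\circ\alpha_g\circ\Ad(u)$ — which is an isomorphic dynamical system, hence has an ergodic cocycle iff $\alpha$ does — so that we may assume the ergodic perturbation on $\Z$ preserves the \emph{same} state $\varphi$ that $\alpha|_\Lambda$ already preserves; alternatively one absorbs $u$ into the cocycle on the $\Lambda$-generators as well, setting $v_\lambda := u^* \alpha_\lambda(u)$, so that $\alpha^v$ globally preserves $\psi$. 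Either way the bookkeeping is routine once one notes that conjugating an action by a fixed unitary, or perturbing by an inner cocycle, does not affect the existence of ergodic cocycles; the real content is entirely in Theorem~\ref{thmA} applied to the $\Z$-direction, and the free product structure is exactly what allows the $\Z$-perturbation to be imposed without disturbing — or being disturbed by — the $\Lambda$-direction.
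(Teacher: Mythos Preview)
Your approach is essentially the same as the paper's: apply Theorem~\ref{thmA} to the $\Z$-factor to obtain an ergodic $\alpha|_\Z$-cocycle, extend it to $\Gamma=\Z\ast\Lambda$ by declaring it trivial on $\Lambda$ (using the universal property of the free product, which the paper phrases via the homomorphism $\Gamma\to\mathcal U(M)\rtimes\Aut(M)$), and conclude ergodicity from $M^{\alpha^v}\subset M^{\alpha^v|_\Z}=\C$.

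Your discussion of the ``main obstacle'' is unnecessary and partly mistaken. The proposition only asks for an \emph{ergodic} cocycle, not a state preserving one; the paper explicitly remarks that $\alpha^u$ need not be state preserving even when $\alpha$ itself is state preserving on all of $\Gamma$. So you are done once you reach ``Thus $v$ is an ergodic cocycle for $\alpha$.'' Moreover, your proposed fixes cannot work in general: the hypothesis only says $\alpha$ is state preserving on $\Z$, so $\alpha|_\Lambda$ need not preserve \emph{any} faithful normal state, and neither conjugating by a fixed unitary nor setting $v_\lambda=u^*\alpha_\lambda(u)$ will manufacture an invariant state for the $\Lambda$-direction. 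Drop that paragraph and the proof is complete.
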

\begin{proof}
	Put $\beta:=\alpha|_\Z$. Then by Theorem \ref{thmA}, there exists a $\beta$-cocycle $v$ such that $\beta^v$ is ergodic. Then there exists a group homomorphism
	$$\Gamma\to \mathcal U(M)\rtimes \Aut(M);\quad g\mapsto (u_g,\alpha_g)$$
such that $u_g =v_g$ for $g\in \Z$ and $u_g =1$ for $g\in \Lambda$. It is an $\alpha$-cocycle extending $v$, and we conclude that $\alpha^u$ is ergodic. Note that $\alpha^u$ is not necessarily a state preserving action, even if $\alpha$ is state preserving on $\Gamma$.
\end{proof}

\subsection*{Approximate vanishing of state preserving cocycles}

The following is a theorem corresponding to \cite[Theorem~4.1]{PSV18}. See also \cite[Theorem 3.3]{Ma18}.

\begin{thm}\label{thm-cohomology-vanishing}
Let $M$ be a diffuse factor with separable predual, and let 
$\alpha\colon \Gamma \actson (M,\varphi)$ be a state preserving outer action.  
Assume that $\Gamma$ is a countably infinite amenable group and 
$\mathrm{BC}(M,\varphi)=\mathbb C$.
\begin{enumerate}

\item 
The inclusion $(M^\omega)^{\alpha^\omega}_{\varphi^\omega}\subset M^\omega$ is 
irreducible.  
In particular, $(M^\omega)^{\alpha^\omega}_{\varphi^\omega}$ is a type $\mathrm{II}_1$ factor, and $\alpha$ is not strongly ergodic.

\item 
Assume that $M$ is a type $\mathrm{III}_1$ factor.  
Then for any $(v,\psi) \in \mathcal C_{\mathrm{state}}(\alpha)$, the following 
holds:  
for any finite subset $F\subset \Gamma$ and any $\varepsilon>0$, there exists 
a unitary $u\in\mathcal U(M)$ such that
\[
  \|u\alpha_g(u^*) - v_g\|_{2,\varphi} < \varepsilon
  \quad (g\in F),
  \qquad 
  \|u\psi - \varphi u\| < \varepsilon.
\]
Equivalently, there exists a unitary $U\in M^\omega$ such that
\[
  U\alpha_g^\omega(U^*) = v_g \quad (g\in\Gamma),
  \qquad 
  \psi^\omega = U\varphi^\omega U^*.
\]

\item 
Assume that $M_\varphi' \cap M = \mathbb C$.  
Then the inclusion $(M_\varphi^\omega)^{\alpha^\omega} \subset M^\omega$ is 
irreducible.  
Moreover, for any $v \in \mathcal C(\alpha,\varphi)$ the following holds:  
for any finite subset $F\subset\Gamma$ and any $\varepsilon>0$, there exists 
a unitary $u\in\mathcal U(M_\varphi)$ such that
\[
  \|u\alpha_g(u^*) - v_g\|_{2,\varphi} < \varepsilon.
\]
Equivalently, there exists a unitary $U\in (M_\varphi)^\omega$ such that
\[
  U\alpha_g^\omega(U^*) = v_g \quad (g\in\Gamma).
\]

\end{enumerate}
\end{thm}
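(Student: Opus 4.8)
The plan is to establish the three parts in order, deducing the cohomology‑vanishing statements (2) and (3) from the irreducibility in (1). The overall scheme follows \cite[Theorem~4.1]{PSV18} and \cite[Theorem~3.3]{Ma18}, with Popa's free malleable deformation replaced throughout by the free Bernoulli shift model $P\subset M^\omega$ of Section~3 (Lemma~\ref{lem-free-independence2}, Remark~\ref{rem-free-Bernoulli-diffuse}), and with one new ingredient, the Connes--St\o rmer transitivity theorem, needed to absorb the moving state in~(2).

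For (1), note first that $\mathrm{BC}(M,\varphi)=\mathbb C$ already gives $(M^\omega)_{\varphi^\omega}'\cap M^\omega=\mathbb C$ by \cite[Proposition~3.3]{AHHM18}, so $(M^\omega)_{\varphi^\omega}$ is a type $\mathrm{II}_1$ factor; the point is to retain irreducibility after cutting the centralizer down to the $\alpha^\omega$‑fixed points. Write $N:=(M^\omega)^{\alpha^\omega}_{\varphi^\omega}$ and take $x\in M^\omega$ commuting with $N$. Using Remark~\ref{rem-free-Bernoulli-diffuse} and a reindexation/diagonal argument, I would realize Haar unitaries of $(P^\omega)^{\alpha^\omega}_{\varphi^\omega}$ as elements $w\in\mathcal U(N)$; then $[x,w]=0$ together with $\ast$‑freeness of $x$ from sufficiently many of the free Bernoulli blocks forces $x\in\mathbb C 1$. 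Irreducibility makes $N$ a factor (its center lies in $N'\cap M^\omega=\mathbb C$), and since $\varphi^\omega$ restricts to a faithful normal trace on it and $N$ is diffuse, $N$ is a type $\mathrm{II}_1$ factor; as $\mathbb C\neq N\subset(M^\omega)^{\alpha^\omega}$, the action $\alpha$ is not strongly ergodic.

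For (2), a standard reindexation reduces the claim to the equivalent ultrapower formulation, so I will produce $U\in\mathcal U(M^\omega)$ with $U\alpha_g^\omega(U^*)=v_g$ for all $g$ and $\psi^\omega=U\varphi^\omega U^*$. I first remove the state: by the Connes--St\o rmer transitivity theorem for type $\mathrm{III}_1$ factors with trivial bicentralizer, $\psi^\omega=U_0\varphi^\omega U_0^*$ for some $U_0\in\mathcal U(M^\omega)$, and then $v_g':=U_0^*v_g\,\alpha_g^\omega(U_0)$ is again an $\alpha^\omega$‑cocycle, now with $\alpha^{v'}$ preserving $\varphi^\omega$; hence each $v_g'$ lies in $(M^\omega)_{\varphi^\omega}$. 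Thus $v'$ is a cocycle for the trace‑preserving action $\alpha^\omega$ on the type $\mathrm{II}_1$ factor $(M^\omega)_{\varphi^\omega}$, whose fixed‑point algebra is again a type $\mathrm{II}_1$ factor by~(1); this is precisely the input making the cocycle‑trivialization argument of \cite[Theorem~4.1]{PSV18} go through, and, after a reindexation, it yields $U_1\in\mathcal U((M^\omega)_{\varphi^\omega})$ with $U_1\alpha_g^\omega(U_1^*)=v_g'$. Setting $U:=U_0U_1$ then gives $U\alpha_g^\omega(U^*)=v_g$, and $U\varphi^\omega U^*=\psi^\omega$ because $U_1$ centralizes $\varphi^\omega$.

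Part (3) runs the same argument inside the centralizer: under $M_\varphi'\cap M=\mathbb C$, \cite{HI14} applies with the diffuse factor $M_\varphi$ in place of $M$, so the free Bernoulli model and all the intertwining unitaries may be taken in $(M_\varphi)^\omega$ — exactly as in the proof of Proposition~\ref{prop-case3} — which makes $(M_\varphi^\omega)^{\alpha^\omega}\subset M^\omega$ irreducible and trivializes $v\in\mathcal C(\alpha,\varphi)$ by a unitary of $(M_\varphi)^\omega$; no state condition arises, since everything lives in the centralizer. The main obstacle I expect is the cocycle trivialization inside part (2): transcribing \cite[Theorem~4.1]{PSV18} to the action $\alpha^\omega$ on $(M^\omega)_{\varphi^\omega}$ means working over an ultrapower base, so a second layer of ultraproduct and the corresponding reindexations must be handled carefully, and one has to verify beforehand both that $\mathrm{BC}(M\subset M\rtimes\Gamma,\varphi)=\mathbb C$ (so the free Bernoulli model of Section~3 is available here) and that the Connes--St\o rmer reduction has genuinely placed $v'$ inside the centralizer; together with the non‑separability of $(M^\omega)_{\varphi^\omega}$, this is where the real bookkeeping lies.
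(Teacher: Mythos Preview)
Your treatment of (1) and (3) is close in spirit to the paper's: both rely on the free Bernoulli model $P\subset M^\omega$ of Section~3 to produce enough approximately $\alpha$-invariant, approximately centralizing elements. The paper makes this concrete via Lemma~\ref{lem4.4}, building finite-dimensional abelian subalgebras $A_n\subset M$ whose ultraproduct $B$ satisfies $E_{B'\cap M^\omega}=\varphi^\omega$; your Haar-unitary formulation is essentially a repackaging of the same idea.

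Part (2), however, is handled quite differently. You propose to first invoke Connes--St\o rmer transitivity to absorb the state (finding $U_0$ with $\psi^\omega=U_0\varphi^\omega U_0^*$), then push the cocycle into $(M^\omega)_{\varphi^\omega}$ and trivialize it there by a \cite{PSV18}-style argument. The paper instead uses the $2\times 2$ matrix trick with the balanced weight $\phi=\varphi\oplus\psi$ on $\widetilde M=\mathbb M_2(M)$: it applies part~(1) directly to the $\phi$-preserving action $\widetilde\alpha^{\widetilde v}$ on $\widetilde M$ (still a separable-predual factor with $\mathrm{BC}(\widetilde M,\phi)=\mathbb C$), concludes that the fixed-point algebra inside $(\widetilde M^\omega)_{\phi^\omega}$ is a type $\mathrm{II}_1$ factor, and reads off both the cocycle equation and the state equation from a single partial isometry $e_{11}\sim e_{22}$ there. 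This buys a great deal: no separate appeal to Connes--St\o rmer, no second layer of ultraproducts, and no non-separability issues, since part~(1) is only ever applied to $\mathbb M_2(M)$ rather than to $(M^\omega)_{\varphi^\omega}$. Your route is not wrong in outline, but the step you correctly flag as the main obstacle---running \cite[Theorem~4.1]{PSV18} on the non-separable $\mathrm{II}_1$ factor $(M^\omega)_{\varphi^\omega}$ and then reindexing---is precisely what the balanced-weight trick sidesteps entirely.
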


\begin{proof}
(1)  
For each von Neumann subalgebra $B \subset (M^\omega)_{\varphi^\omega}$, the 
inclusion $B' \cap M^\omega \subset M^\omega$ admits a unique $\varphi^\omega$-preserving conditional expectation, which we denote by $E_{B' \cap M^\omega}$.  To prove item (1), it suffices 
to show $E_{B'\cap M^\omega} = \varphi^\omega$ when $B=(M^\omega)_{\varphi^\omega}^{\alpha^\omega}$. 
For this, it is enough to show that for any $x \in M^\omega$ there exists a 
von Neumann subalgebra $B \subset (M^\omega)^{\alpha^\omega}_{\varphi^\omega}$ 
such that
\[
  E_{B'\cap M^\omega}(x) = \varphi^\omega(x).
\]
We fix an arbitrary $x=(x_n)_\omega \in (M^\omega)_1$.  
Choose an increasing sequence of finite subsets $(F_n)_n$ of $\Gamma$ with 
$\Gamma=\bigcup_n F_n$, and a sequence $(d_n)_n$ in $\mathbb N$ such that 
$d_n \to \infty$.  
By Lemma~\ref{lem4.4} below, for each $n$ there exists a $d_n$-dimensional abelian von 
Neumann subalgebra $A_n \subset M$ satisfying:

\begin{itemize}

\item[(a)]  
For each minimal projection $p \in A_n$,
\[
  \varphi(p)=\frac{1}{d_n},\qquad 
  \|p\varphi - \varphi p\| < \frac{1}{n d_n},\qquad 
  \|\alpha_g(p)-p\|_2 < \frac{1}{n d_n}
  \quad\text{for all } g\in F_n.
\]

\item[(b)]  
$  \| E_{A_n' \cap M}(x_n) - \varphi(x_n) \|_2 < {4}/{\sqrt{d_n}}.
$
\end{itemize}
For each $n$, write $A_n = \sum_{i=1}^{d_n} \mathbb C e_i^n$, where each $e_i^n$ is a minimal projection. Define
\[
  \varphi_n := \sum_{i=1}^{d_n} e_i^n \, \varphi \, e_i^n \in M_\ast .
\]
This is a faithful normal state, and by (a) we have 
$\|\varphi - \varphi_n\| < 1/n$.  
Hence the state $(\varphi_n)_\omega$ on $M^\omega$ given by the sequence $(\varphi_n)_n$ agrees with $\varphi^\omega$.

Since $A_n \subset M_{\varphi_n}$, general facts about ultraproduct von Neumann 
algebras imply the followings.

\begin{itemize}

\item  
If $B$ denotes the ultraproduct 
$\prod^\omega (A_n,\varphi_n)$, then there is a natural inclusion 
$B \subset M^\omega$.

\item  
The inclusion $B' \cap M^\omega \subset M^\omega$ admits a unique 
$(\varphi_n)_\omega$-preserving conditional expectation, given by
\[
  E_{B' \cap M^\omega}((y_n)_\omega)
  = (\, E_{A_n' \cap M}(y_n) \,)_\omega,
  \qquad \text{for }(y_n)_\omega \in M^\omega.
\]

\end{itemize}
By (a) and the identity $(\varphi_n)_\omega = \varphi^\omega$, we have $B \subset (M^\omega)^{\alpha^\omega}_{\varphi^\omega}$.  
Then the above $E_{B' \cap M^\omega}$ is the unique $\varphi^\omega$-preserving  conditional expectation, hence agrees with the notation at the beginning of the proof.  Finally, by (b),
\[
  E_{B'\cap M^\omega}((x_n)_\omega) 
  = \varphi^\omega((x_n)_\omega),
\]
which is exactly the desired condition.

(2)  
Let $\widetilde M := \M_{2}(M)=M\otimes \M_2$, and equip it with the 
action $\widetilde\alpha_g := \alpha_g\otimes\id$ for $g\in \Gamma$.  
Let $(e_{ij})_{1\le i,j\le 2}$ be the matrix units coming from 
$\M_2 \subset \widetilde M$.  
For $g\in\Gamma$, define 
\[
  \widetilde v_g := e_{11} + v_g e_{22}.
\]
This defines a $\widetilde\alpha$-cocycle and the corresponding perturbed action is given by
\[
  \widetilde{\alpha}_g^{\widetilde v}\left(
  \begin{bmatrix}
    x_{11} & x_{12} \\
    x_{21} & x_{22}
  \end{bmatrix}
\right)
  =
  \begin{bmatrix}
    \alpha_g(x_{11}) 
      & \alpha_g(x_{12}) v_g^* \\
    v_g \alpha_g(x_{21}) 
      & v_g \alpha_g(x_{22}) v_g^*
  \end{bmatrix}.
\]
Consider the balanced weight $\phi := \varphi \oplus \psi$ on $\widetilde M$.  
Then $\widetilde{\alpha}^{\widetilde v}$ preserves $\varphi \oplus \psi$.  
Since $M$ is a type $\mathrm{III}_1$ factor with trivial bicentralizer, we have 
$\mathrm{BC}(\widetilde M,\phi)=\mathbb C$.  
Thus, by item (1), the fixed point algebra of $(\widetilde{\alpha}^{\widetilde v})^\omega$ inside $(\widetilde M^\omega)_{\phi^\omega}$ is a type $\mathrm{II}_1$ factor.

The projections $e_{11},e_{22}$ belong to this fixed point algebra, and since 
they have the same value under $\phi$, they are equivalent in the fixed point algebra.  
Taking the $(2,1)$-entry of a partial isometry connecting them, we obtain 
$U\in \mathcal U(M^\omega)$ satisfying
\[
  v_g \alpha_g(U) = U \quad (g\in\Gamma), 
  \qquad 
  [D\psi : D\varphi]_t\, \sigma_t^{\varphi^\omega}(U)=U 
  \quad (t\in\mathbb R).
\]
The first equation yields the cocycle identity.  
The second implies
\[
  [D (U^* \psi^\omega U) : D\varphi^\omega]_t
   = U^* [D\psi^\omega : D\varphi^\omega]_t\, 
      \sigma_t^{\varphi^\omega}(U)
   = 1, \qquad t\in\mathbb R,
\]
and therefore $U^* \psi^\omega U = \varphi^\omega$.

(3)  
This is obtained simply by checking that the objects chosen in the above proofs 
can be taken inside $M_\varphi$; the details are omitted.
\end{proof}

\begin{lem}\label{lem4.4}
Let $\Gamma \actson^\alpha (M,\varphi)$ be the action appearing in 
Theorem~\ref{thm-cohomology-vanishing}.  
For any finite subsets $F_0\subset\Gamma$, $X_0\subset M$, any $\delta>0$, and 
any $d\in\mathbb N$, there exists a $d$-dimensional abelian von Neumann 
subalgebra $A_0 \subset M$ such that:

\begin{itemize}

\item 
For each minimal projection $p \in A_0$,
\[
  \varphi(p)=\frac{1}{d},\qquad 
  \|p\varphi - \varphi p\| < \frac{\delta}{d},\qquad 
  \|\alpha_g(p) - p\|_{2,\varphi} < \frac{\delta}{d}
  \quad (g \in F_0).
\]

\item 
For all $x \in X_0$,
\[
  \|E_{A_0' \cap M}(x) - \varphi(x)\|_{2,\varphi}
  < 2 \|x - \varphi(x)\|_{2,\varphi}\, d^{-1/2}.
\]

\end{itemize}

Moreover, if $M_\varphi' \cap M = \mathbb C$, then $A_0$ may be chosen inside 
$M_\varphi$.
\end{lem}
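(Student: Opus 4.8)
The plan is to build $A_0$ first inside an ultrapower, where a copy of $M$ in free position is available, and then transport it back to $M$ by a representing-sequence argument; the factor $d^{-1/2}$ will come out of a short free-probability identity. First I would apply Lemma~\ref{lem-free-independence2} — whose hypothesis $\mathrm{BC}(M\subset M\rtimes\Gamma,\varphi)=\mathbb C$ holds under the present assumptions — to obtain $V\in\mathcal U((M^\omega)_{\varphi^\omega})$ and $P:=\mathrm W^*\{\alpha_g^\omega(VMV^*)\}_{g\in\Gamma}\subset M^\omega$ with $M$ and $P$ mutually $\ast$-free with respect to $\varphi^\omega$, $(P,\varphi^\omega)\simeq\ast_{\Gamma}(M,\varphi)$, and $\alpha^\omega|_P$ the free Bernoulli shift. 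By Remark~\ref{rem-free-Bernoulli-diffuse}, $(P^\omega)^{\alpha^\omega}_{\varphi^\omega}$ is diffuse, and since $\varphi^\omega$ restricts to a trace on it I may choose inside it a $d$-dimensional abelian subalgebra $\mathcal A=\mathbb C q_1\oplus\cdots\oplus\mathbb C q_d$ with $\varphi^\omega(q_j)=1/d$ for every $j$. Then $\mathcal A$ lies in the centralizer of $\varphi^\omega$, is fixed pointwise by the (extended) action, and — being contained in $P^\omega$, which is $\ast$-free from $M$ — is $\ast$-free from $M$ with respect to $\varphi^\omega$. I treat all of this as taking place inside a single ultrapower, since an element of $P^\omega$ has a representing sequence valued in $M$; I keep writing $M^\omega$ and $\varphi^\omega$.

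For the computation, put $u:=\sum_{j=1}^{d}e^{2\pi\mathrm{i}j/d}q_j\in\mathcal U(\mathcal A)$, so that $u^d=1$ and $\sum_jq_jxq_j=\tfrac1d\sum_{l=0}^{d-1}u^lxu^{-l}$ for all $x$. For $x\in M$ write $x^\circ:=x-\varphi(x)$; then the $\varphi^\omega$-preserving conditional expectation onto $\mathcal A'\cap M^\omega=\bigoplus_jq_jM^\omega q_j$ is $E_{\mathcal A'\cap M^\omega}(x)=\sum_jq_jxq_j$, and, since $u$ lies in the centralizer of $\varphi^\omega$,
\[
\bigl\|E_{\mathcal A'\cap M^\omega}(x)-\varphi(x)\bigr\|_{2,\varphi^\omega}^2
=\frac1{d^2}\sum_{l,m}\varphi^\omega\!\bigl(u^{m}(x^\circ)^{*}u^{l-m}x^\circ u^{-l}\bigr)
=\frac1d\sum_{k=0}^{d-1}\varphi^\omega\!\bigl((x^\circ)^{*}u^{k}x^\circ u^{-k}\bigr).
\]
For $0<k<d$ one has $\varphi^\omega(u^{k})=0$, and $\varphi(x^\circ)=0$, so $(x^\circ)^{*}u^{k}x^\circ u^{-k}$ is an alternating word of length four between $M$ and $P^\omega$ all of whose letters are $\varphi^\omega$-centred; by $\ast$-freeness its $\varphi^\omega$-value vanishes. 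Only the term $k=0$ survives, giving $\|E_{\mathcal A'\cap M^\omega}(x)-\varphi(x)\|_{2,\varphi^\omega}=d^{-1/2}\|x^\circ\|_{2,\varphi}$.

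It remains to transport $\mathcal A$ to $M$. Representing $q_1,\dots,q_d$ by sequences in $M$ (and perturbing slightly so that at each index one has mutually orthogonal projections summing to $1$), for any tolerance $\tau>0$ a standard diagonal argument yields a partition of unity $\{p_1,\dots,p_d\}$ by projections in $M$ with $|\varphi(p_j)-1/d|<\tau$, $\|p_j\varphi-\varphi p_j\|<\tau$, $\|\alpha_g(p_j)-p_j\|_{2,\varphi}<\tau$ for $g\in F_0$, and $\|\sum_jp_jxp_j-\varphi(x)\|_{2,\varphi}<d^{-1/2}\|x^\circ\|_{2,\varphi}+\tau$ for $x\in X_0$. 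A small perturbation — transferring a $\varphi$-small sub-projection from a block of excess trace to one of deficit, possible because $M$ is diffuse — adjusts each $\varphi(p_j)$ to exactly $1/d$ while altering every $p_j$ by an arbitrarily small amount in $\|\cdot\|_{2,\varphi}$, hence barely disturbs the other estimates. Taking $\tau$ small enough relative to $\delta/d$ and to $\min\{\|x^\circ\|_{2,\varphi}:x\in X_0,\ x\neq\varphi(x)\}$, all the asserted inequalities hold (the case $x=\varphi(x)$ being trivial), and $A_0:=\{p_1,\dots,p_d\}''$ is as wanted. For the last clause, when $M_\varphi'\cap M=\mathbb C$ one uses the refinement of Proposition~\ref{prop-free-independence1} employed in the proof of Proposition~\ref{prop-case3} to take $V\in\mathcal U((M_\varphi)^\omega)$, so that $P\subset(M_\varphi)^\omega$ and $\mathcal A$, hence $A_0$, stays inside $M_\varphi$; then $\|p_j\varphi-\varphi p_j\|=0$ automatically.

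The real obstacle is packaging into one subalgebra $\mathcal A$ the three competing demands — almost $\alpha^\omega$-invariance, membership in the centralizer of $\varphi^\omega$, and $\ast$-freeness from $M$: an abelian subalgebra of $(M^\omega)_{\varphi^\omega}$ is automatically in the centralizer but has no reason to be free from $M$, while a free copy of $M$ inside an ultrapower carries no reason to be action-invariant. The free Bernoulli structure supplied by Lemma~\ref{lem-free-independence2} is precisely what reconciles them, since it produces an $\alpha^\omega$-invariant, diffuse, centralizer subalgebra in free position relative to $M$. Once $\mathcal A$ is in hand the $d^{-1/2}$ estimate is a one-line computation and everything else is routine approximation together with a trace-balancing perturbation.
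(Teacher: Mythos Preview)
Your proof is correct and follows essentially the same approach as the paper: both use the free Bernoulli embedding of Lemma~\ref{lem-free-independence2} to place a $d$-dimensional abelian algebra in free position relative to $M$ while remaining approximately $\alpha$-invariant and in the centralizer, then invoke free independence for the $d^{-1/2}$ estimate and transport back to $M$ via representing sequences. The only cosmetic differences are that the paper first builds $A_0$ inside the \emph{abstract} free product $\ast_\Gamma(M,\varphi)$ and only afterwards embeds it in $M^\omega$ (thereby sidestepping the double-ultrapower collapse you handle informally), and that the paper simply states the identity $\|E_{A_0'\cap M^\omega}(x)-\varphi(x)\|_{2,\varphi^\omega}=d^{-1/2}\|x-\varphi(x)\|_{2,\varphi}$ where you spell it out via the unitary $u=\sum_j e^{2\pi i j/d}q_j$.
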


\begin{proof}
Let $\beta \colon \Gamma \actson (P,\psi) := \ast_\Gamma(M,\varphi)$ be the free 
Bernoulli action.  
By Remark~\ref{rem-free-Bernoulli-diffuse}, 
$(P^\omega)^{\beta^\omega}_{\psi^\omega}$ is diffuse.  
Hence we may choose a $d$-dimensional abelian von Neumann subalgebra 
$B_0 \subset (P^\omega)^{\beta^\omega}_{\psi^\omega}$ whose minimal projections 
all have $\psi^\omega$-value $1/d$.  
Since $P$ is diffuse, by choosing representing sequences appropriately we obtain 
a $d$-dimensional abelian subalgebra $A_0 \subset P$ such that each minimal 
projection $p \in A_0$ satisfies
\[
  \psi(p) = \frac{1}{d},\qquad 
  \|\beta_g(p) - p\|_{2,\psi} < \frac{\delta}{d}\ (g\in F_0),\qquad 
  \|p\psi - \psi p\| < \frac{\delta}{d}.
\]
Next, by Lemma~\ref{lem-cocycle3.6}, we embeds the free Bernoulli action into 
$M^\omega$ in such a way that
\[
  \alpha^\omega \colon \Gamma \actson M \ast P \subset M^\omega,
  \qquad 
  \alpha^\omega|_P = \beta.
\]
Since the subalgebra $A_0 \subset P$ sits freely from $M$, the free independence implies
\[
  \| E_{A_0' \cap M^\omega}(x) - \varphi(x)\|_{2,\varphi^\omega}
  = \|x - \varphi(x)\|_{2,\varphi^\omega}\, d^{-1/2}
  \qquad (x\in M).
\]
Moreover, because $\alpha^\omega|_P = \beta$, $\varphi^\omega|_P=\psi$ and $P \subset M^\omega $ is with $\varphi^\omega$-preserving expectation, if we regard $A_0$ as a subalgebra in $M^\omega$, then each minimal projection $p\in A_0$ satisfies
\[
  \varphi^\omega(p)=\frac{1}{d},\qquad
  \|\alpha_g^\omega(p) - p\|_{2,\varphi^\omega} < \frac{\delta}{d},\qquad
  \|p\varphi^\omega - \varphi^\omega p\| < \frac{\delta}{d}.
\]
Since $M$ is diffuse, we may again choose $A_0$ appropriately from $M$ itself, 
yielding the desired conclusion.

Finally, if $M_\varphi' \cap M = \mathbb C$, then 
$(P_\psi)^{\beta^\omega}$ is diffuse.  
Repeating the above argument, exactly as in the proof of 
Proposition~\ref{prop-case3}, we see that $A_0$ may be taken inside $M_\varphi$.
\end{proof}

\small{

}


\begin{thebibliography}{AHHM18}


\bibitem[AH12]{AH12} H. Ando and U. Haagerup, \textit{Ultraproducts of von Neumann algebras.} J. Funct. Anal. {\bf 266} (2014), 6842--6913.

\bibitem[AHHM18]{AHHM18} H. Ando, U. Haagerup, C. Houdayer, and A. Marrakchi, \textit{Structure of bicentralizer algebras and inclusions of type III factors.} Math. Ann. {\bf 376} (2020), no. 3-4, 1145--1194.

%\bibitem[BO08]{BO08} N. P. Brown and N. Ozawa, \textit{$C^{\ast}$-algebras and finite-dimensional approximations}. Graduate Studies in Mathematics, 88. American Mathematical Society, Providence, RI, 2008.

%\bibitem[Co72]{Co72} A.~Connes, \textit{Une classification des facteurs de type $\rm III$.} Ann. Sci. \'Ecole Norm. Sup. (4) {\bf 6} (1973), 133--252.

%\bibitem[Co75]{Co75} A.~Connes, \textit{Classification of injective factors.} Ann. of Math. (2) {\bf 104} (1976), no. 1, 73--115.

%\bibitem[CT76]{CT76} A. Connes and M. Takesaki, \textit{The flow of weights on factors of type $\rm III$.} {T\^{o}hoku} Math. J. (2) {\bf 29} (1977), no. 4, 473--575.

%\bibitem[Ha77a]{Ha77a} U. Haagerup, \textit{Operator valued weights in von Neumann algebras, I.} J. Funct. Anal. {\bf 32} (1979), 175--206.

%\bibitem[Ha77b]{Ha77b} U. Haagerup, \textit{Operator valued weights in von Neumann algebras, II.} J. Funct. Anal. {\bf 33} (1979), 339--361.

%\bibitem[Ha85]{Ha85} U. Haagerup \textit{Connes' bicentralizer problem and uniqueness of the injective factor of type $\rm III_1$.} Acta Math. {\bf 158} (1987), no. 1-2, 95--148.

\bibitem[HI14]{HI14} C.~Houdayer and Y. Isono, \textit{Free independence in ultraproduct von Neumann algebras and applications.} J. London Math. Soc. {\bf 92} (2015), 163--177.

%\bibitem[HI15a]{HI15} C.~Houdayer and Y. Isono, \textit{Unique prime factorization and bicentralizer problem for a class of type $\rm III$ factors}. Adv. Math. {\bf 305} (2017), 402--455.

\bibitem[HI22]{HI22} C.~Houdayer and Y. Isono, \textit{Pointwise inner automorphisms of almost periodic factors.} Selecta Math. (N.S.) {\bf 30} (2024), 58.

\bibitem[Is23]{Is23} Y. Isono, \textit{Haagerup and St\o rmer's conjecture on pointwise inner automorphisms.} Compos. Math. {\bf 160} (2024), 2480--2495.  

\bibitem[KST89]{KST89} Y. Kawahigashi, C.E. Sutherland, and M. Takesaki, \textit{The structure of the automorphism group of an injective factor and the cocycle conjugacy of discrete abelian group actions.} Acta Math. {\bf 169} (1992), 105--130.

\bibitem[Ma18]{Ma18} A. Marrakchi, \textit{Fullness of crossed products of factors by discrete groups.} Proc. Roy. Soc. Edinburgh Sect. A {\bf 150} (2020), no. 5, 2368--2378.


\bibitem[Ma23]{Ma23} A. Marrakchi, \textit{Kadison's problem for type $\rm III$ subfactors and the bicentralizer conjecture}. Invent. Math. {\bf 239} (2025), 79--163.

\bibitem[MV23]{MV23} A. Marrakchi and S. Vaes, \textit{Ergodic states on type $\rm III_1$ factors and ergodic actions.} J. Reine Angew. Math. {\bf 809} (2024), 247--260.

\bibitem[Oc85]{Oc85} A. Ocneanu, \textit{Actions of discrete amenable groups on von Neumann algebras.} Lecture Notes in Mathematics, {\bf 1138}. Springer-Verlag, Berlin, 1985. iv+115 pp.

%\bibitem[Po81]{Po81} S.~Popa, \textit{On a problem of R. V. Kadison on maximal abelian $\ast$-subalgebras in factors.} Invent. Math. {\bf 65} (1981), no. 2, 269--281.

%\bibitem[Po95]{Po95} S.~Popa, \textit{Classification of subfactors and their endomorphisms.} CBMS Regional Conf. Ser. in Math., {\bf 86}.

\bibitem[Po95]{Po95} S. Popa, \textit{Free-independent sequences in type ${\rm II_1}$ factors and related problems.} Recent advances in operator algebras (Orl\'eans, 1992). Ast\'erisque No. {\bf 232} (1995), 187--202.

%\bibitem[Po01]{Po01} S.~Popa, \textit{On a class of type $\rm II_1$ factors with Betti numbers invariants.} Ann.\ of Math.\ {\bf 163} (2006), 809--899.

%\bibitem[Po03]{Po03} S.~Popa, \textit{Strong rigidity of $\rm II_1$ factors arising from malleable actions of w-rigid groups~$\rm I$.} Invent.\ Math.\ {\bf 165} (2006), 369--408.

%\bibitem[Po05a]{Po05a} S.~Popa, \textit{Cocycle and orbit equivalence superrigidity for malleable actions of w-rigid groups.} Invent. Math. {\bf 170} (2007), no. 2, 243--295.

%\bibitem[Po06a]{Po06a} S. Popa, \textit{On the superrigidity of malleable actions with spectral gap.} J. Amer. Math. Soc. {\bf 21} (2008), no. 4, 981--1000.

%\bibitem[Po20]{Po20} S. Popa, \textit{On ergodic embeddings of factors.} Comm. Math. Phys. {\bf 384} (2021), no. 2, 971--996.


\bibitem[PSV18]{PSV18} S. Popa, D. Shlyakhtenko and S. Vaes, \textit{Classification of regular subalgebras of the hyperfinite $\rm II_1$ factor.} J. Math. Pures Appl. (9) {\bf 140} (2020), 280--308.

\bibitem[Ta03]{Ta03} M.~Takesaki, \textit{Theory of operator algebras $\rm II$}. Encyclopedia of Mathematical Sciences, 125. Operator Algebras and Non-commutative Geometry, {\bf 5}. Springer-Verlag, Berlin, 2002.


\end{thebibliography}
\end{document}